\documentclass[11pt]{article}
\usepackage[latin1]{inputenc}
\usepackage{amsmath}
\usepackage{amsfonts}
\usepackage{amssymb}
\usepackage{wasysym}
\usepackage{amsthm}

\newcommand{\R}{\mathbb R}
\newcommand{\eps}{\epsilon}
\newcommand{\oX}{\overrightarrow{X}}

\newcommand{\vp}{\vec{p}}
\newcommand{\vq}{\vec{q}}
\newcommand{\vm}{\vec{m}}
\newcommand{\In}{\{1\ldots N\}}
\newcommand{\vz}{\vec{\zeta}}

\newcommand{\xiphi}{\xi_\phi}
\newcommand{\vo}{\vec{0}}
\newcommand{\vwP}{{\cal P}^{\psi,w}_m}

\newcommand{\uvP}{\underline{\cal P}^\psi_m}
\newcommand{\vphi}{\vec{\phi}}
\newcommand{\vmu}{\vec{\mu}}
\newcommand{\uX}{\underline{X}}
\newcommand{\vpsi}{\vec{\psi}}

\newcommand{\vmS}{S^\psi}
\newcommand{\uvmS}{\underline{S}^\psi}

\newcommand{\vmSw}{S^{ \vec{\psi,w}}}
\newcommand{\uvmSw}{\underline{S}^{ \vec{\psi,w}}}
\newcommand{\uXi}{{\Xi_\phi^+}}
\newtheorem{theorem}{Theorem}[section]
\newtheorem{lemma}{Lemma}[section]
\newtheorem{proposition}{Proposition}[section]
\newtheorem{cor}{Corollary}[section]

\newtheorem{defi}{Definition}[section]
\newtheorem{acknowledgment*}{Acknowledgment}
\newtheorem{assumption}{Assumption}[section]
\newcommand{\be}{\begin{equation}}
\newcommand{\ee}{\end{equation}}

\begin{document}

\setcounter{page}{1}
\setlength{\baselineskip}{1.3\baselineskip}
\thispagestyle{empty}





\huge
\begin{center}{\bf On Semi-discrete Monge Kantorovich and generalized partitions}\end{center}
\normalsize
\begin{center} Gershon Wolansky\end{center}
\begin{abstract}
Let  $X$ a probability measure space and $\psi_1....\psi_N$ measurable, real  valued
 functions on $X$. Consider all  possible partitions of
$X$ into $N$ disjoint subdomains $X_i$ on which $\int_{X_i}\psi_i$ are prescribed. We address  the question of characterizing the set $(m_1,,,m_N) \in \R^N$ for  which there exists a partition $X_1, \ldots X_N$ of $X$ satisfying  $\int_{X_i}\psi_i= m_i$  and discuss some optimization problems on this  set of  partitions. The relation of this problem to semi-discrete version of optimal mass transportation is discussed as well.
\end{abstract}
\tableofcontents
\section{Introduction}\label{int}
\subsection{Semi-discrete Monge problem}
 Optimal Transportation, known also as {\it Monge-Kantorovich theory}, became very popular in last decades. The first publication by Monge [\ref{mon}] goes back to 1781. Excellent modern reviews are the books of C. Villani [\ref{vil}, \ref{vil1}].

The object of optimal transportation  is  to find an  optimal map transporting a given, prescribed  probability measure into another. In general setting, it deals with a pair of probability measure spaces $(X,{\cal B}_X, \mu), (Y,{\cal B}_Y, \nu)$ and a $\mu\oplus\nu$ measurable cost function $c:X\times Y\rightarrow \R$. The Monge problem  is to maximize\footnote{Traditionally, the MK problem deals with minimization of the cost. In the current setting it is more natural to talk about maximization. The two options are, of course, equivalent under a sign change of the cost $c$ \ . } the  functional
\be\label{mk} T\rightarrow  \int_Xc(x, T(x))d\mu\in \R\ee
among all measurable maps $T:(X, {\cal B}_X)\rightarrow (Y, {\cal B}_Y)$ which  transport the measure $\mu$ to $\nu$, i.e. $T_\#\mu=\nu$, that is
\be\label{mkT}\mu(T^{-1}(B))=\nu(B)\ee
for any  $B\in {\cal B}_Y$.

In the special case where  $Y$ is a {\it finite} space, i.e $Y:=\{ y_1, \ldots y_N\}$ and ${\cal B}_Y=2^{Y}$, $\nu$ is characterized by a vector
\be\label{simI} \vm:=(m_1, \ldots, m_N)\in S_I:= \left\{ \vm\in \R^{N} \ , \ \ \sum_{i\in I} m_i=1 \ \ , \ \ m_i\geq 0 \ \right\}  \ee
via   $\nu(\{y_i\}):= m_i$. Here  and thereafter, $I:= \{1\ldots N\}$.

 In this case, any mapping $T_\#\mu=\nu$  induces a {\it partition} of $X$ into a finite number of components
$X_i:= T^{-1}(\{y_i\})\in {\cal B}_X$ where $\mu(X_i)=m_i$. The optimal transport plan $T$ is then reduced to an {\it optimal partition}\footnote{ See [\ref{Ri}]. } of $X$ within the class
 \begin{multline}\label{partstrong}{\cal P}_{\vm}:= \{\overrightarrow{X}:=(X_1, \ldots X_N) \ ; \ \  X_i\in {\cal B}_X \ \\
  \cup_1^N X_i=X \ , \ \  \ \mu(X_i\cap X_j)=0 \ \text{if} \ i\not= j, \ \ \mu(X_i)=m_i \} \ .  \end{multline}

In the above case we can replace $c:X\times Y\rightarrow \R$ by $N$ measurable functions $\phi_i:X\rightarrow \R$ via $\phi_i(x):= c(x, y_i)$.
The semi-discrete (or  {\it optimal partition})  Monge problem  of maximizing (\ref{mk}, \ref{mkT}) takes the form
\be\label{md}\Xi_\phi^*(\vm):=  \sup_{\overrightarrow{X}}\left\{ \sum_1^N\int_{X_i} \phi_i(x)d\mu\  \ ;  \ \ \  \overrightarrow{X}\in {\cal P}_{\vm} \right\} , \ee
where, again,   $\vm\in S_I$.
\par
This paper  generalizes the concept of optimal partition in three  directions to be described below.
\subsubsection{Individual prices}\label{sip}
Let $\vec{\psi}:= (\psi_1\ldots \psi_N)$ where $\psi_i:X\rightarrow \R$ are  measurable functions on $(X, {\cal B}_X)$.
Let
  \begin{multline}\label{part}{\cal P}^{ \vec{\psi}}_{\vm}:=   \{\overrightarrow{X}:=(X_1, \ldots X_N) \ ; \ \  X_i\in {\cal B}_X \\ \cup_1^N X_i=X \ , \ \   \ \mu(X_i\cap X_j)=0 \ \text{if} \ i\not= j, \ \ \int_{X_i}\psi_i d\mu=m_i \} \end{multline}
 and set
 \be\label{SN} \vmS_I:= \left\{ \vm\in \R^N \ \ ; \ \ \ {\cal P}^{\vec{\psi}}_{\vm}\not=\emptyset\right\} \ . \ee
The {\it generalized} optimal partition problem (\ref{md})  takes the form of
\be\label{gmk}\Xi_\phi^*(\vm):=  \sup_{\overrightarrow{X}}\left\{ \sum_1^N\int_{X_i} \phi_i(x)d\mu \ \ ; \ \ \ \overrightarrow{X}\in {\cal P}_{\vm}^{\vec{\psi}}\right\}\ee
where $\vec{m}\in \vmS_I$.
\subsubsection {Subpartition}\label{ssubp}
The definition of ${\cal P}^{\vpsi}_{\vm}$ requires the partition to exhaust  the space $X=\cup_1^N X_i$.
We extend the set of partitions ${\cal P}^{ \vec{\psi}}_{\vm}$ to {\it sub partitions} where  $ \cup_1^N X_i\subseteq X$:
 \begin{multline}\label{subpart}\underline{\cal P}^{ \vec{\psi}}_{\vm}:=   \{\overrightarrow{X}:=(X_1, \ldots X_N) \ ; \ \  X_i\in {\cal B}_X \\ \cup_1^N X_i\subseteq X \ , \ \   \ \mu(X_i\cap X_j)=0 \ \text{if} \ i\not= j, \ \ \int_{X_i}\psi_i d\mu=m_i \} \end{multline}
 and, respectively,
 \be\label{subSN} \uvmS_{I}:= \left\{ \vm\in \R^N \ \ ; \ \ \ \underline{\cal P}^{\vec{\psi}}_{\vm}\not=\emptyset\right\} \ , \ee
 \be\label{subgmk}\uXi^*(\vm):=  \sup_{\overrightarrow{X}}\left\{ \sum_1^N\int_{X_i} \phi_i(x)d\mu \ \ ; \ \ \ \overrightarrow{X}\in \underline{\cal P}_{\vm}^{\vec{\psi}}\right\}
  \ \ \ , \ \  \vm \in\uvmS_{I}\ee

\subsubsection{ Optimal selections}\label{sselection}
To motivate the above we consider the following cooperative game:\par
Let $\{X,{\cal B},  \mu\}$ be a probability measure space (the "cake").
\par
For each agent $i\in\In$ and $x\in X$  we associate the {\it price} $\psi_i(x)\in\R$ of purchase of   $x$ by the the agent $i$.
\par
Let $C_i\geq 0$ be the {\it capital} of agent $i$, we set  $\overrightarrow{C}=\{ C_1\ldots C_N\}\in \R^N$. An {\it affordable share} for $i$ is a part of the cake
$X_i\in {\cal B}$ such that  $\int_{X_i}\psi_i d\mu\leq C_i$. An {\it  admissible partition} of  $X$  is defined as a partition of $X$ into $N$ essentially disjoint affordable shares  of the agents $\overrightarrow{X}:=(X_1, \ldots X_N)$, that is
$$
\mu(X_i\cap X_i)=0 \ \ \text{if} \ \ i\not= j \ \ \ ; \ \ \
\int_{X_i}\psi_i d\mu \leq C_i \ \ , \ \cup_1^N X_i\subseteq X \ .  $$
More generally, let $K\subset\R^N$ be a closed set. The set of subpartitions $\underline{\cal P}^\psi_K$ is defined by
\be\label{daa}
\underline{\cal P}^\psi_K:= \cup_{\vm\in K} \underline{\cal P}^{ \vec{\psi}}_{\vm} \ .  \ee
\par
For each agent $i$ and $x\in X$  we associate the {\it profit} $\phi_i(x)$ of $x$ for this agent. Again
$\phi_i:X\rightarrow \R$
are measurable functions.
The profit of agent $i$ under a given partition is
$$F_i(X_i):=\int_{X_i}\phi_i(x)d\mu  \ . $$
The total profit of all agents is $${\cal F}_N(\overrightarrow{X}):=\sum_1^N F_i(X_i)  \ . $$
The  object of the game is to maximize the total profit, that is,
\be\label{kol}\max_{\overrightarrow{X}} {\cal F}_N(\overrightarrow{X})\ee
 over all admissible partitions subjected in $\underline{\cal P}^\psi_K$.

 The  paradigm for the selection  problem is as follows:
 \begin{enumerate}
 \item
 Maximize the function
\be\label{mainrashi}\vm \mapsto\uXi^*(\vm)\ee
where $\uXi^*$ given by (\ref{subgmk}),  on $\uvmS_I\cap K$.
   \item
  For a maximizer $\vm$ of (\ref{mainrashi}),  evaluate the optimal subpartitions  $\oX$ realizing the maximum (\ref{kol}) \em{within}  $\uvP$.
 \end{enumerate}

  \subsection{Description of main results}

 Obviously,  if all prices $\psi_i$ are identical (say $\psi_i\equiv 1$)  then  the set $\vmS_I$ is  just the simplex $S_I$ (\ref{simI}).
In that case (\ref{gmk}) is reduced into the semi discrete Monge problem  (\ref{md}).
\par
Since the semi-discrete Monge problem is a special case of the Monge problem, a lot is known on its solvability and uniqueness.
The  essential  condition for solvability and uniqueness of the classical Monge problem is   the {\it twist condition}
which, in the present case
(and for a smooth $\phi_i$ on a smooth manifold $X$)
takes the form
\be\label{twist1}\phi_i-\phi_j \ \ \ \text{has no critical point} \ \forall i\not= j \ .  \ee
 see  [\ref{G}, \ref{Mc}, \ref{MT}, \ref{Ri} \ ....].
 \par
 The  twist condition for non-smooth  $\phi_i$  an abstract topological measure space $X$  takes the form
 \be\label{1res} \mu\left(x\in X \ ;  \phi_i(x)-\phi_j(x)=r\right)=0\ee
 for any $i\not= j\in I$ and for any $r\in\R$ (Section \ref{backM}, Theorem \ref{classM}).

 The generalization for this in the case of individual price takes the form
  \be\label{2res} \mu\left(x\in X \ ;  \phi_i(x)-\phi_j(x)=\alpha\psi_i(x)-\beta\psi_j(x)\right)=0\ee
  for any $i\not= j\in I$ and any $\alpha, \beta\in\R$ (Theorem \ref{main3}, Section \ref{scunique}). Indeed, (\ref{2res}) is reduced to (\ref{1res}) where
  $\vpsi$ is a constant.
  \par
In the case of  subpartitions  we need an additional assumption to guarantee the unique solvability, namely
  \be\label{3res} \mu\left(x\in X \ ;  \phi_i(x)=\alpha\psi_i(x)\right)=0\ee
  for any $\alpha\in\R$ and any $i\in I$.  (Theorem \ref{main3}-(ii), Section \ref{scunique}).  In particular, we need the condition
   \be\label{4res} \mu\left(x\in X \ ;  \phi_i(x)=r\right)=0\ee
  for any $r\in\R$ and any $i\in I$, in addition to (\ref{1res}) to obtain the unique solvability of the subpartition version  of the Monge problem. (Corollary \ref{corunique}, Section \ref{backM}).

 In contrast, (\ref{2res}, \ref{3res}) are not enough, in general, for the unique solvability  in the general case.   The additional condition
   \be\label{5res} \mu\left(x\in X \ ;  \alpha\psi_i(x)-\beta\psi_j(x)=0 \right)=0\ee
   for any $\alpha,\beta\in\R$, $\alpha^2+\beta^2>0$ and $i\not=j\in I$, together with (\ref{2res}, \ref{3res}),  are enough to guarantee the unique solvability of the problems introduced above (sec. \ref{sip}- \ref{sselection}).

   \subsection{Structure of the paper}
   In Section \ref{weak} we relax the  notion  of (sub)partition to that of a  weak    (sub)partition. In  Theorem \ref{weak=strong},  section \ref{ps},  we prove that the weak (sub)partition  and strong (sub)partition sets are the same. In section \ref{dual} we characterize these sets  using a dual formalism.
   \par
  Section \ref{optimweak} deals with optimal weak (sub)partitions.  In section \ref{optimweak1} we set up the condition for the existence of
  optimal weak (sub)partitions and prove the existence of such subpartition for the selection problem (Theorem \ref{main2}). In sections \ref{sduoptim} and
  \ref{sduoptim1} we use the dual formulation to characterize the optimal weak sub(partition) (Theorem \ref{main new}).
  \par
  In Section \ref{optimstrong} we discuss strong (sub)partitions. Section \ref{optimstrong1} sets up the assumption (\ref{5res}) for the existence of unique strong
  partition for any $\vm$ in the boundary of the (sub)partition set $\vmS_I$, ($\uvmS_I$), in Proposition \ref{uniquestrong}. In section
  \ref{scunique} we prove the main result for  uniqueness of optimal strong (sub)partition- Theorem \ref{main3}, and for the optimal selection (\ref{mainrashi}) where $K$ is a convex set - Theorem \ref{thselect}. Finally, in section \ref{backM} we discuss the Monge selection problem in light of the above results and prove the uniqueness of an optimal subpartition for the Monge problem under conditions (\ref{1res}, \ref{4res}), in
  Corollary \ref{corunique}.
  \subsection{Notations and conventions}\label{not}
\begin{description}
\item{i)} Unless otherwise stated explicitly, any assumption cited below is valid form its citation point to  the rest of the text.
\item{ii)} $I:= \In$. $\R^I:= \R^N$.
\item{iii)} For  $J\subseteq I$,
$ \R^J=\left\{ \vp=(p_1, \ldots p_{N})\in \R^{N}  \ \  ; \  \  p_i=0 \ \text{if} \ \ i\not\in J\  \right\} $.
 \item{iv)} The partial order relation $\vp>\vq$ ($\vp\geq\vq$) on $\R^J$ means $p_i>q_i$ ($p_i\geq q_i$) for any $i\in J$.
 \item{v)} $\R^J_+:= \{\vp\in \R^J \ ; \vp  \geq \vec{0} \}$.
\item{vi)}
 $(X, {\cal B}, \mu)$ is a compact Polish space ${\cal B}$ is the Borel$-\sigma$ algebra and $\mu$ is a Borel non-atomic measure.
  \item{vii)}  $\vpsi:= (\psi_1\ldots \psi_N)\in C(X; \R^N)$.
\item{viii)} $\vmu:= (\mu_1, \ldots \mu_N)$ where $\mu_i$ are non-negative Borel measures on ${\cal B}(X)$.
\item{ix)}
${\cal P}^{ \vec{\psi},w}_{\vm}:= \left\{\vec{\mu} \ ; \ \  \ \ \int_\Omega \psi_i d\mu_i=m_i \ \ , \ \ \sum_1^N\mu_i=\mu \right\}$.
 \item{x)}
$\underline{\cal P}^{ \vec{\psi},w}_{\vm}:= \left\{\vec{\mu} \ ; \ \  \ \ \int_\Omega \psi_i d\mu_i=m_i \ \ , \ \ \sum_1^N\mu_i\leq \mu \right\}$
\item{xi)}  $\vmSw_I:= \left\{ \vm\in \R^I \ \ ; \ \ \ {\cal P}^{w, \vec{\psi}}_{\vm}\not=\emptyset\right\}$. \ \item{xii)}
 $\uvmSw_I:= \left\{ \vm\in \R^I \ \ ; \ \ \ \underline{\cal P}^{w, \vec{\psi}}_{\vm}\not=\emptyset\right\}$.
\end{description}

 \section{Weak (sub)partitions}\label{weak}

\subsection{Back to Kantorovich}
The Monge problem (\ref{mk}, \ref{mkT})  is relaxed into the {\it Kantorovich problem} as follows:
maximize of the linear functional
$$  \int_{X\times Y} c(x,y)d\pi(x,y): \Pi(\mu,\nu) \rightarrow \R$$
where $\Pi(\mu,\eta)$ is the convex set of measures on $X\times Y$ whose marginals are $\mu, \nu$, that is
$$ \pi(A\times Y)= \mu(A) \ \ \ ; \ \ \ \pi(X\times B)=\nu(B)$$
for all measurable sets $A\in {\cal B}_X$($B\in{\cal B}_Y$).\par
Again, in the special case where $Y$ is a discrete  space $Y=\{ y_1, \ldots y_N\}$ and $\nu(\{y_i\}):= m_i\geq 0$, the set $\Pi(\mu,\nu)$ is  reduced into the set of decompositions of the measure $\mu$  into $n$ non-negative measures
 $$ {\cal P}^{w}_{\vm}:= \left\{\vec{\mu}:=(\mu_1, \ldots \mu_N) \ ; \ \  \ \ \int_X d\mu_i=m_i \ \ , \ \ \sum_1^N\mu_i=\mu \right\} \ , $$
 Indeed, $\pi\in\Pi(\mu,\nu)$ iff $\pi=\sum_1^N \mu_i\delta_{\{y_i\}}$ where $\vec{\mu}\in  {\cal P}^w_{\vm}$.

  Note that the set of partitions ${\cal P}_{\vm}$ can be embedded in ${\cal P}^{w}_{\vm}$ by identifying a set $X_i\in {\cal B}$ with the measure $\mu$  restricted to $X_i$, that is, $\mu_i:=\mu\lfloor X_i$, whence $\int_{X_i} d\mu=\int_Xd\mu_i$.

In the same way we consider the set of  {\it relaxed} (weak)  partitions corresponding to $\vec{\psi}$.  Here (\ref{part}) is generalized into
 \be\label{partweak}{\cal P}^{ \vec{\psi},w}_{\vm}:= \left\{\vec{\mu}:=(\mu_1, \ldots \mu_N) \ ; \ \  \ \ \int_\Omega \psi_i d\mu_i=m_i \ \ , \ \ \sum_1^N\mu_i=\mu \right\} \ . \ee
where, again, $\vm\in\R^I$.
 Let also $\vmS_I$   (\ref{SN}) generalized into
 \be\label{SNw} \vmSw_I:= \left\{ \vm\in \R^I \ \ ; \ \ \ {\cal P}^{w, \vec{\psi}}_{\vm}\not=\emptyset\right\} \ . \ee

Naturally, (\ref{gmk}) is generalized into
\be\label{wgmk} \Xi^*_{\phi,w}(\vm):=  \sup_{\vec{\mu}}\left\{ \sum_1^N\int_{X} \phi_i(x)d\mu_i \ \ ; \ \ \ \vec{\mu}\in {\cal P}_{\vm}^{w, \vec{\psi}}\right\}\ee
and  $ \Xi^*_{\phi,_w}(\vm)=-\infty$ iff $\vec{m}\not\in \vmSw_I$.

In analogy to (\ref{subpart}-\ref{subSN}) we also define the {\it weak subpartition}
 \be\label{subpartweak}\underline{\cal P}^{ \vec{\psi},w}_{\vm}:= \left\{\vec{\mu}:=(\mu_1, \ldots \mu_N) \ ; \ \  \ \ \int_\Omega \psi_i d\mu_i=m_i \ \ , \ \ \sum_1^N\mu_i\leq \mu \right\} \  \ee
 and
 \be\label{subSNw} \uvmSw_I:= \left\{ \vm\in \R^I \ \ ; \ \ \ \underline{\cal P}^{ \vec{\psi},w}_{\vm}\not=\emptyset\right\} \ , \ee
 $${\Xi_{\phi,w}^+}(\vm):=  \sup_{\vec{\mu}}\left\{ \sum_1^N\int_{X} \phi_i(x)d\mu_i \ \ ; \ \ \ \vec{\mu}\in \underline{\cal P}_{\vm}^{w, \vec{\psi}}\right\}$$
$ {\Xi_{\phi,w}^+}(\vm)=-\infty$ iff $\vec{m}\not\in \underline{S}_N^{\vec{\psi},w}$.\par
Since, as remarked above,  any (sub)partition  $\overrightarrow{X}\in {\cal P}^{ \vec{\psi}}_{\vm}$ ($\overrightarrow{X}\in \underline{\cal P}^{ \vec{\psi}}_{\vm}$) induces a weak (sub)partition $\vec{\mu}\in {\cal P}^{  \vec{\psi},w}_{\vm}$ ($\vec{\mu}\in \underline{\cal P}^{ \vec{\psi},w}_{\vm}$)
via
$\mu_i:= \mu\lfloor X_i$ it follows
\be\label{oneincl}  \vmS_I\subseteq  \vmSw_I \ \ , \ \ \  \uvmS_{I}\subseteq  \underline{S}^{ \vec{\psi},w}_I \ . \ee


\subsection{Properties of the partition set}\label{ps}

 \begin{lemma}\label{noempty}
The sets $\vmS_I \ , \ \  \uvmSw_I\subset \R^I$ are  compact and convex.
\end{lemma}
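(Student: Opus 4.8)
The plan is to treat the two sets by different routes, since $\uvmSw_I$ depends linearly on a measure variable while $\vmS_I$ does not. For the weak subpartition set I would work entirely in the Kantorovich (measure) picture. Consider the feasible family of vector measures $\mathcal M:=\{\vmu=(\mu_1,\ldots,\mu_N):\ \mu_i\ge 0,\ \sum_1^N\mu_i\le\mu\}$. Each component lies in the order interval $\{\nu\ge 0:\nu\le\mu\}$, which is norm-bounded (total mass $\le 1$) and weak-$*$ closed, hence weak-$*$ compact by Banach--Alaoglu, $C(X)$ being separable because $X$ is compact Polish (convention (vi)). Both defining constraints, namely $\mu_i\ge 0$ and $\int f\,d(\sum_i\mu_i)\le\int f\,d\mu$ for all $f\ge 0$ in $C(X)$, pass to weak-$*$ limits, so $\mathcal M$ is weak-$*$ compact, and the evaluation map $E(\vmu):=(\int_X\psi_i\,d\mu_i)_{i=1}^N$ is weak-$*$ continuous since $\psi_i\in C(X)$ (convention (vii)). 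By definition $\uvmSw_I=E(\mathcal M)$, hence it is compact; convexity is immediate from linearity of $E$ and convexity of $\mathcal M$, since if $\vm,\vm'$ are realized by $\vmu,\vmu'$ then $t\vmu+(1-t)\vmu'\in\mathcal M$ realizes $t\vm+(1-t)\vm'$.

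For the strong partition set $\vmS_I$ convexity is the genuinely delicate point, as one cannot form convex combinations of honest partitions of $X$. Here I would exploit nonatomicity of $\mu$. Encode a partition $\oX$ by the signed measures $\nu_i:=\psi_i\,\mu$, so that $m_i=\nu_i(X_i)$ and $\vmS_I=\{(\nu_1(X_1),\ldots,\nu_N(X_N)):\ (X_1,\ldots,X_N)\text{ a measurable partition of }X\}$. I would obtain convexity and compactness from the Dvoretzky--Wald--Wolfowitz theorem, the partition form of Lyapunov's convexity theorem: for finite nonatomic measures the range of the matrix $(X_1,\ldots,X_N)\mapsto(\nu_j(X_i))_{j,i}$ over all measurable partitions is convex and compact and coincides with its fractional relaxation $\{(\int h_i\,d\nu_j)_{j,i}:\ h_i\ge 0,\ \sum_i h_i=1\}$. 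Since $\psi_i$ is only continuous, $\nu_i$ is signed, so I would reduce to the nonnegative case by splitting $\psi_i=\psi_i^+-\psi_i^-$ and applying the theorem to the $2N$ finite nonatomic measures $\psi_i^{\pm}\mu$, then taking the affine, continuous diagonal projection $M\mapsto(M_{ii})_i$, which maps the convex compact matrix range onto $\vmS_I$.

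The hard part is exactly this last step: showing $\vmS_I$ is simultaneously closed and convex. Equivalently, one must verify that passing from genuine $0$--$1$ partitions to fuzzy partitions $h_i\in[0,1]$ neither enlarges the attainable set of vectors $\vm$ nor fails to preserve its closedness, and this is precisely where the standing nonatomicity hypothesis is indispensable (in the presence of an atom both properties can fail). An alternative, once Theorem \ref{weak=strong} is available, is to identify $\vmS_I$ with the weak partition set $\vmSw_I$ and simply transport compactness and convexity from the measure picture of the first paragraph; but the route above is self-contained and rests only on the Lyapunov/Dvoretzky--Wald--Wolfowitz machinery together with the nonatomicity of $\mu$.
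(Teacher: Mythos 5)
Your proposal is correct, but only half of it coincides with the paper. The paper's own proof is in fact entirely about the weak sets: it bounds $\vmSw_I$ by $\|\psi_i\|_\infty$, invokes weak-$*$ compactness of measures on the compact space $X$, and says convexity ``follows directly from the definition'' --- an argument that only makes sense for weak (sub)partitions, so the $\vmS_I$ appearing in the statement is evidently intended as $\vmSw_I$; the identification of the strong sets with the weak ones is deferred to Theorem \ref{weak=strong}, proved there by a Krein--Milman exposed-point argument combined with Lyapunov's theorem. Your first paragraph (Banach--Alaoglu, weak-$*$ closedness of the constraints $\mu_i\ge 0$ and $\sum_1^N\mu_i\le\mu$, continuity and affinity of $\vmu\mapsto(\int_X\psi_i\,d\mu_i)_{i\in I}$) is exactly the paper's terse argument for $\uvmSw_I$, spelled out carefully. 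Your second paragraph is genuinely different: you take $\vmS_I$ at face value as the strong partition set and prove its convexity and compactness directly from the Dvoretzky--Wald--Wolfowitz partition-range theorem applied to the $2N$ nonatomic measures $\psi_i^{\pm}\mu$, followed by the affine diagonal projection; the reduction to nonnegative measures via $\psi_i=\psi_i^+-\psi_i^-$ and the nonatomicity of $\psi_i^{\pm}\mu$ (absolutely continuous with respect to the nonatomic $\mu$) are both in order. This route is valid and self-contained, and it buys more than the lemma: since DWW also asserts that the partition range coincides with its fractional relaxation, and by Radon--Nikodym the weak partitions are exactly that relaxation ($\mu_i=h_i\mu$, $h_i\ge 0$, $\sum_1^N h_i=1$ $\mu$-a.e.), your argument simultaneously yields $\vmS_I=\vmSw_I$, i.e.\ the full-partition half of Theorem \ref{weak=strong}. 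The trade-off is thus one of organization: the paper keeps this lemma soft and postpones all Lyapunov-type machinery to Theorem \ref{weak=strong}, whereas you front-load it, so the lemma as literally stated comes out directly and the subsequent theorem is largely pre-proved.
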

\begin{proof}
Since  $|m_i|=|\int_X\psi_id\mu_i|\leq \|\psi_i\|_\infty \int_X d\mu =  \|\psi_i\|_\infty$, so $\vmSw_I$ is bounded. Compactness follows  from the weak-$C^*$compactness of the set of probability measures on a compact set. Convexity follows directly from the definition.
\end{proof}
Recalling the definition of the strong (sub)partition sets (\ref{SN}, \ref{subSN}) we now prove
\begin{theorem}\label{weak=strong}
$$\vmS_I
  = \vmSw_I \  \  \  \text{and} \  \uvmS_I
  =  \uvmSw_I$$
\end{theorem}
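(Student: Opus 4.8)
Only the inclusions $\vmS_I\subseteq\vmSw_I$ and $\uvmS_I\subseteq\uvmSw_I$ have been recorded so far, in (\ref{oneincl}). Hence the whole content of the theorem is the two \emph{reverse} inclusions, i.e.\ that every moment vector attainable by a weak (sub)partition is already attainable by a strong one. The plan is to prove this by a Lyapunov-type \emph{rounding}: a weak partition prescribes, at $\mu$-a.e.\ point $x$, a \emph{fuzzy} assignment of $x$ among the $N$ labels, and I will replace it by a genuine (crisp) assignment without changing the $N$ moments $m_i$.

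First I would extract densities. Fix $\vm\in\vmSw_I$ and a weak partition $\vmu=(\mu_1,\dots,\mu_N)\in{\cal P}^{\vpsi,w}_{\vm}$. Since the $\mu_i$ are non-negative and $\sum_i\mu_i=\mu$, each satisfies $\mu_i\leq\mu$, so $\mu_i\ll\mu$ and the Radon--Nikodym theorem yields densities $\rho_i:=d\mu_i/d\mu$ with $\rho_i\geq 0$ and $\sum_{i=1}^N\rho_i=1$ for $\mu$-a.e.\ $x$. Thus $(\rho_1(x),\dots,\rho_N(x))$ lies in the simplex $S_I$ a.e., and the constraint reads
\be\label{plan-dens} m_i=\int_X\psi_i\,d\mu_i=\int_X\psi_i\,\rho_i\,d\mu\ , \qquad i=1,\dots,N\ . \ee

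The key step is to round $\rho$ to an indicator system. I would invoke the Lyapunov convexity theorem in its Dvoretzky--Wald--Wolfowitz form: if $\lambda_1,\dots,\lambda_N$ are finite non-atomic (signed) measures and $\rho_1,\dots,\rho_N\geq 0$ with $\sum_i\rho_i=1$, then there is a measurable partition $X=\sqcup_{i}X_i$ with $\lambda_j(X_i)=\int_X\rho_i\,d\lambda_j$ for all $i,j$. I apply this to the measures $d\lambda_j:=\psi_j\,d\mu$, which are finite and non-atomic because $\mu$ is non-atomic (notation (vi)) and each $\psi_j$ is continuous, hence bounded, on the compact space $X$; signedness is handled by splitting $\lambda_j$ into its Jordan parts $\psi_j^{+}\mu,\psi_j^{-}\mu$, applying the theorem to the resulting $2N$ non-negative non-atomic measures and subtracting. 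Reading off the diagonal entries $j=i$ and using (\ref{plan-dens}) gives
\be\label{plan-match} \int_{X_i}\psi_i\,d\mu=\int_X\rho_i\,\psi_i\,d\mu=m_i\ , \qquad i=1,\dots,N\ , \ee
so $\oX=(X_1,\dots,X_N)\in{\cal P}^{\vpsi}_{\vm}$ and $\vm\in\vmS_I$; together with (\ref{oneincl}) this proves $\vmS_I=\vmSw_I$. For the subpartition identity I would enlarge the label set by one: given $\vmu\in\underline{\cal P}^{\vpsi,w}_{\vm}$ we have $\sum_i\mu_i\leq\mu$, so $\rho_0:=1-\sum_{i=1}^N\rho_i\geq 0$ is a legitimate extra density; rounding the enlarged fuzzy partition $(\rho_0,\rho_1,\dots,\rho_N)$ produces sets $X_0,X_1,\dots,X_N$, and discarding $X_0$ leaves a genuine subpartition with $\cup_{i\geq1}X_i=X\setminus X_0\subseteq X$ and $\int_{X_i}\psi_i\,d\mu=m_i$, giving $\vm\in\uvmS_I$.

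I expect the only real obstacle to be the rounding step, i.e.\ the passage from fuzzy weights to an honest partition preserving all the moments; this is exactly where non-atomicity of $\mu$ is indispensable (for an atomic $\mu$ the statement is false). Everything else --- Radon--Nikodym, the reduction to the diagonal entries, and the dummy-label device for subpartitions --- is routine.
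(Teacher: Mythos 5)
Your proof is correct, but it takes a genuinely different route from the paper's. The paper fixes $\vm$ and works inside the set ${\cal P}^{\vpsi,w}_{\vm}$ itself: this set is convex and weak-$*$ compact, so by Krein--Milman it has an extreme point, and a perturbation argument (invoking Lyapunov's theorem only to split the two integrals $\int_D\psi_1\,d\mu_1$, $\int_D\psi_i\,d\mu_i$ in half on a set $D$ where two densities are both strictly between $0$ and $1$) shows that any extreme point has densities $h_i\in\{0,1\}$ $\mu$-a.e., i.e.\ \emph{is} a strong partition; the subpartition case is then asserted to follow identically. You instead take an \emph{arbitrary} weak (sub)partition and purify it in one stroke via the Dvoretzky--Wald--Wolfowitz form of Lyapunov's theorem, matching the full $N\times N$ moment matrix $\int_{X_i}\psi_j\,d\mu$ (after Jordan-splitting the signed measures $\psi_j\mu$, whose non-atomicity correctly follows from $\mu_j\ll\mu$ and non-atomicity of $\mu$), and you handle subpartitions with a clean dummy-label device. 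The trade-off: the paper's argument is self-contained modulo basic Lyapunov plus Krein--Milman, while yours outsources the rounding to the stronger DWW purification theorem, which is legitimate but is itself proved by essentially the paper's kind of extreme-point argument. In exchange, your version yields a strictly stronger conclusion: \emph{every} weak (sub)partition can be replaced by a strong one with the same moment vector (indeed the same full moment matrix), whereas the paper's argument only exhibits \emph{some} strong partition realizing the prescribed $\vm$, namely the extreme point; and your explicit treatment of the subpartition case fills in what the paper dismisses with ``the proof follows identically.''
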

\begin{proof} We  have to prove the opposite inclusion of (\ref{oneincl}) .   If $\vm\in \vmSw_I$, consider the set of weak partitions $\ {\cal P}^{ \vec{\psi},w}_{\vm}$.   By Radom-Nikodym Theorem,   any $\vec{\mu}= (\mu_1, \ldots \mu_N)\in
  \ {\cal P}^{ \vec{\psi},w}_{\vm}$  is characterized by   $\vec{h}= (h_1, \ldots h_N)$ where $h_i$ are $\mu$-measurable functions, $\mu_i=h_i\mu$,
 satisfying
  $0\leq h_i\leq 1$ on $X$. Moreover we have $\sum_1^N h_i=1$ $\mu$-a.e on $X$. Now $\ {\cal P}^{ \vec{\psi},w}_{\vm}$ is convex and compact  in the weak  topology.
  By Krein-Milman Theorem there exists an exposed point of   $\ {\cal P}^{ \vec{\psi},w}_{\vm}$ . We  show that for an exposed point, $h_i\in\{0,1\}$ $\mu$-a.e on $X$, for all $i\in I$.

  Assume a set $D\subset X$ on which both $h_1>\eps$ and $h_i>\eps$ for some $i\not= 1$.  Since $h_1+h_i\in[0,1]$ it follows also that $h_1$, $h_i$ are smaller than $1-\eps$ on $D$ as well. Using Lyapunov partition theorem [\ref{ly}]  we can find a subset $C\subset D$ such that
 $ \int_C\psi_1d\mu_1=\int_D \psi_1d\mu_1/2$ and$ \int_C\psi_id\mu_i=\int_D \psi_id\mu_i/2$. Set
 $w:= {\bf 1}_{D}-2{\bf 1}_{C}$ where ${\bf 1}_A$ stands for the indicator function of a measurable set $A\subset X$. It follows that  $w$ is supported on $D$ , $\|w\|_{\infty,D} =1$
 and $\int_X w\psi_1d\mu_1=\int_Xw\psi_id\mu_i=0$. By assumption, $h_1(x)\pm \eps w(x) \in [0,1]$ and $h_i(x)\pm \eps w(x) \in [0,1]$ for any $x\in D$.
  Set  \\
  $\vec{\mu}_1:= \left( \mu_1+\eps  w\mu, \mu_2, \ldots, \mu_i-\eps w\mu, \ldots \mu_N\right)$ and  \\ $\vec{\mu}_2:= \left( \mu_1- \eps w\mu, \mu_2, \ldots, \mu_i\
   +\eps w\mu, \ldots \mu_N\right)$.
  Then both $\vec{\mu}_1, \vec{\mu}_2$ are in  $\ {\cal P}^{ \vec{\psi},w}_{\vm}$ and $\vec{\mu}= \frac{1}{2}\vec{\mu}_1 + \frac{1}{2}\vec{\mu}_2$. This is in contradiction to the assumption that $\mu$ is an exposed point. It follows that  either  $h_i=0$ or $h_1=0$ $\mu$-a.e. Since $i$ is arbitrary and $\sum_I h_j=1$ $\mu$-a.e. it follows that $h_1\in \{0,1\}$
      $\mu$-a.e, hence $h_j\in\{0,1\}$ for any $j\in I$   $\mu$-a.e.     \  The proof    $ \underline{S}_I^{\vpsi}
  =  \underline{S} _I^{\vpsi,w}$ follows identically.   \end{proof}
\subsection{Dual representation of weak (sub)partitions}\label{dual}

Let now, for $\vp= (p_1, \ldots p_N)\in\R^I$
\be\label{xi0} \xi_0(x, \vp):= \max_{i\in I} p_i\psi_i(x): X\times\R^I\rightarrow \R\ee
\be \label{11} \xi_0^+(x,\vp):= \max(\xi_0(x,\vp), 0)\ee
\be\label{0Xi0} \Xi_0(\vp):= \int_X \xi_0(x, \vp) d\mu:\R^I\rightarrow\R \ \ . \ee
\be\label{0Xi0+} \Xi_0^+(\vp):= \int_X \xi_0^+(x, \vp) d\mu(x) \ . \ee
\begin{theorem}\label{main1}
 $\vm\in \vmS_I$ (res.  $\vm\in \uvmS_I$) if and only if
\be\label{ineq0} \ a) \ \  \Xi_0(\vp)-\vec{m}\cdot\vp\geq 0 \ \ \ ; \ \  b) \ \ res. \ \Xi^+_0(\vp)-\vec{m}\cdot\vp\geq 0 \ee
for any $\vp\in\R^I$. Here $\vec{m}\cdot\vp:= \sum_1^N p_im_i$
\end{theorem}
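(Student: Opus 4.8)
The plan is to exploit the convex-duality structure of the problem. By Theorem \ref{weak=strong} the strong sets coincide with the weak sets $\vmSw_I$ and $\uvmSw_I$, which by Lemma \ref{noempty} are compact and convex. A compact convex subset of $\R^I$ is the intersection of the half-spaces determined by its support function $h_K(\vp):=\sup_{\vec{n}\in K}\vec{n}\cdot\vp$, i.e. $\vm\in K$ iff $\vm\cdot\vp\leq h_K(\vp)$ for every $\vp$. Thus the entire statement reduces to identifying the support function of $\vmSw_I$ (resp. $\uvmSw_I$) with $\Xi_0$ (resp. $\Xi_0^+$).

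First I would dispatch the easy necessity direction. If $\vm\in \vmSw_I$, choose a weak partition $\vmu=(h_1\mu,\ldots,h_N\mu)$ with $h_i\geq 0$ and $\sum_i h_i=1$ $\mu$-a.e. Then
$$\vm\cdot\vp=\sum_{i} p_i\int_X\psi_i\,d\mu_i=\int_X \sum_i p_i\psi_i(x)h_i(x)\,d\mu\leq \int_X \max_{i\in I} p_i\psi_i(x)\,d\mu=\Xi_0(\vp),$$
the inequality being the pointwise bound $\sum_i p_i\psi_i h_i\leq (\max_i p_i\psi_i)\sum_i h_i$. This is exactly (\ref{ineq0})(a). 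For the subpartition set one uses $\sum_i h_i\leq 1$ together with the option of leaving slack mass, which bounds the integrand by $\xi_0^+=\max(\xi_0,0)$ and yields (\ref{ineq0})(b).

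For the converse I would compute the support function exactly, namely $\sup_{\vm\in\vmSw_I}\vm\cdot\vp=\Xi_0(\vp)$, and then invoke the separating-hyperplane theorem: if $\vm\notin \vmSw_I$ then there is $\vp$ with $\vm\cdot\vp>\sup_{\vec{n}\in\vmSw_I}\vec{n}\cdot\vp=\Xi_0(\vp)$, contradicting (\ref{ineq0})(a); hence (a) forces $\vm\in\vmSw_I=\vmS_I$. The nontrivial inequality $\sup\geq\Xi_0$ requires exhibiting a weak partition that \emph{attains} $\Xi_0(\vp)$: for each $x$ let $i(x)$ be the least index maximizing $p_i\psi_i(x)$, put $h_{i(x)}(x)=1$ and the remaining $h_j(x)=0$. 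The induced $\vm$ then satisfies $\vm\cdot\vp=\int_X\max_{i\in I}p_i\psi_i\,d\mu=\Xi_0(\vp)$. The subpartition case is identical, except that on the set $\{x:\max_{i\in I}p_i\psi_i(x)\leq 0\}$ we assign no mass, which reproduces $\Xi_0^+$ and establishes the support function of $\uvmSw_I$.

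The main obstacle is the measurability of this pointwise selection, which is what guarantees that the supremum defining the support function is genuinely attained \emph{inside} $\vmSw_I$ rather than merely approached. Since $\vpsi\in C(X;\R^N)$ and there are only finitely many indices, each set $\{x:p_i\psi_i(x)\geq p_j\psi_j(x)\ \forall j\in I\}$ is closed, hence Borel; selecting the least maximizing index produces a Borel-measurable map $x\mapsto i(x)$, so the resulting $h_i$ are indicators of Borel sets and $\vmu$ is a legitimate element of $\vmSw_I$ (for the subpartition version one intersects with the Borel set where the maximum is positive). This attainment is precisely what upgrades the a-priori inequality of the first paragraph into the exact value of the support function, and thereby closes the converse.
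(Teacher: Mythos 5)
Your proof is correct, but the converse direction takes a genuinely different route from the paper's. The necessity direction (pointwise bound $p_i\psi_i\leq\xi_0$, resp.\ $\xi_0^+$, integrated against a weak (sub)partition) is the same in both. For the converse, the paper runs a two-parameter regularization: it replaces $\max$ by the smooth log-sum-exp $max_\eps$, adds a quadratic penalty $\frac{\delta}{2}|\vp|^2$, shows the resulting functional $\Xi_\eps(\vp)+\frac{\delta}{2}|\vp|^2-\vm\cdot\vp$ is strictly convex and (given (\ref{ineq0})) coercive, takes its minimizer $\vp^{\eps,\delta}$, derives the a priori bound $\delta|\vp^{\eps,\delta}|\leq\sqrt{\delta}\sqrt{\Xi_\eps(\vec{0})}$ from convexity, and then extracts a weak-$*$ limit of the explicit Gibbs-type measures $\mu_i^{(\vp^{\eps,\delta})}$ as $\delta\rightarrow 0$ to produce an element of ${\cal P}^{\vec{\psi},w}_{\vm}$; membership in $\vmS_I$ then follows from Theorem \ref{weak=strong}. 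You instead identify $\Xi_0$ (resp.\ $\Xi_0^+$) as the support function of the nonempty compact convex set $\vmSw_I$ (resp.\ $\uvmSw_I$), with the key step being exact attainment of $\sup_{\vm}\vm\cdot\vp$ by the explicit argmax partition (least maximizing index, ties broken measurably since the $\psi_i$ are continuous), and then close with strict separation of closed convex sets. What your approach buys: it is shorter and more elementary, it makes the geometric meaning of $\Xi_0$ transparent, and it shows as a by-product that the support value is attained by a \emph{strong} partition without any nondegeneracy assumption (ties cost nothing at the level of the support function). What the paper's approach buys: it never needs a measurable selection (the regularized measures have explicit densities), and the machinery it sets up --- $max_\eps$, the measures $\mu_i^{(\vp)}$, the convergence of gradients --- is exactly what is reused later (Definition \ref{def32}, Lemmas \ref{regXi}, \ref{writeXi}, \ref{writeXiw}, Theorem \ref{main new}) to handle the optimal-partition functional $\Xi_\phi$, where the profits $\phi_i$ are merely u.s.c.\ and subdifferential arguments replace the clean argmax picture. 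Your reliance on continuity of the $\psi_i$ is legitimate, as it is the paper's standing assumption (Section \ref{not}-(vii)).
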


\begin{cor}\label{main2cor}
$\vm$ is an inner point of $\vmS_I$ (res. $\uvmS_I$) iff $\vp=0$ is a strict minimizer of (\ref{ineq0}-a) (res (\ref{ineq0}-b)).
\end{cor}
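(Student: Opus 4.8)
The plan is to read Theorem~\ref{main1} as the statement that $\Xi_0$ (resp. $\Xi_0^+$) is the support function of the convex body $\vmS_I$ (resp. $\uvmS_I$), and then to invoke the elementary fact that a point lies in the interior of a convex body precisely when its pairing with every nonzero direction is \emph{strictly} dominated by the support function. Concretely, for fixed $\vm$ set $g(\vp):=\Xi_0(\vp)-\vm\cdot\vp$ (and $g^+(\vp):=\Xi_0^+(\vp)-\vm\cdot\vp$ for the subpartition case). Since $\xi_0(x,\vo)=0$ and $\xi_0^+(x,\vo)=0$ we have $\Xi_0(\vo)=\Xi_0^+(\vo)=0$, hence $g(\vo)=g^+(\vo)=0$. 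By Theorem~\ref{main1}, $\vm\in\vmS_I$ is equivalent to $g\ge 0$ on $\R^I$, i.e.\ to $\vp=\vo$ being a global minimizer of $g$ with minimal value $0$; the analogous statement with $g^+$ holds for $\uvmS_I$. Thus what remains is to show that this minimizer is \emph{strict} exactly when $\vm$ is an interior point.

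The key structural observation is that $\xi_0(x,\cdot)$ and $\xi_0^+(x,\cdot)$ are positively homogeneous of degree one (for $t>0$, $\max_i(tp_i)\psi_i(x)=t\max_i p_i\psi_i(x)$, and the outer $\max(\cdot,0)$ commutes with multiplication by $t>0$), so $\Xi_0$ and $\Xi_0^+$ are positively homogeneous of degree one as well, and both are continuous in $\vp$ by dominated convergence (near a given $\vp$ the integrands are bounded by $(\max_i|p_i|)\max_i\|\psi_i\|_\infty$). Consequently $g$ and $g^+$ are positively homogeneous of degree one; in particular, from $g(t\vp)=tg(\vp)$ for $t>0$, strict positivity of $g$ in a punctured neighborhood of $\vo$ already forces strict positivity on all of $\R^I\setminus\{\vo\}$, so ``strict minimizer'' may be read as $g(\vp)>0$ for every $\vp\neq\vo$ without ambiguity between the local and the global notion.

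For the implication ``strict minimizer $\Rightarrow$ interior point'', I would use homogeneity to upgrade pointwise positivity to a uniform linear bound: if $g(\vp)>0$ for all $\vp\ne\vo$, then by continuity $g$ attains a positive minimum $c>0$ on the unit sphere, whence $g(\vp)\ge c|\vp|$ for all $\vp$. For any $\vm'$ with $|\vm'-\vm|<c$ one then gets $\Xi_0(\vp)-\vm'\cdot\vp=g(\vp)+(\vm-\vm')\cdot\vp\ge(c-|\vm-\vm'|)|\vp|\ge 0$, so by Theorem~\ref{main1} the whole ball $\{\vm':|\vm'-\vm|<c\}$ lies in $\vmS_I$, i.e.\ $\vm$ is interior. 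For the converse, if $\vm$ is interior there is $\delta>0$ with the ball of radius $\delta$ about $\vm$ contained in $\vmS_I$; applying Theorem~\ref{main1} to the point $\vm+\tfrac{\delta}{2}\vp/|\vp|\in\vmS_I$ gives $\Xi_0(\vp)\ge\vm\cdot\vp+\tfrac{\delta}{2}|\vp|$, i.e.\ $g(\vp)\ge\tfrac{\delta}{2}|\vp|>0$ for every $\vp\ne\vo$, which is exactly strictness of the minimizer. The subpartition statement is identical with $\Xi_0^+$, $g^+$ and $\uvmS_I$ in place of $\Xi_0$, $g$, $\vmS_I$, since the only properties used are continuity, degree-one positive homogeneity, and the characterization of Theorem~\ref{main1}.

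I expect the only genuine step---as opposed to bookkeeping---to be the homogeneity argument that converts the pointwise strict inequality $g>0$ on $\R^I\setminus\{\vo\}$ into the uniform estimate $g(\vp)\ge c|\vp|$; this is what lets an \emph{open} ball (rather than merely directional segments) fit inside the body, and it is also what collapses the distinction between a local and a global strict minimizer.
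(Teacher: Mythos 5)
Your proof is correct and takes essentially the same route as the paper's: both directions rest on Theorem~\ref{main1} together with the positive homogeneity of $\vp\mapsto\Xi_0(\vp)-\vm\cdot\vp$, passing between strictness of the minimizer at $\vo$ and interiority of $\vm$ by perturbing $\vm$. The only difference is that you spell out the compactness-on-the-unit-sphere argument giving the uniform bound $\Xi_0(\vp)-\vm\cdot\vp\ge c|\vp|$, a step the paper's proof leaves implicit when it asserts that positivity persists for all $\vm'$ in a neighborhood of $\vm$.
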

\noindent
{\it Proof of Corollary~\ref{main2cor}}: Since $\Xi_0(\vp) -\vm\cdot \vp$ is an homogeneous function and $\Xi_0(\vec{0})=0$, it follows that $\vec{0}$ is a minimizer of (\ref{ineq0}) for any $\vm\in \vmS_I$. If it is a strict minimizer then $\Xi_0(\vp) -\vm\cdot \vp>0$ for any $\vp\not=0$ hence there exists a neighborhood of $\vm$ for which $\Xi_0(\vp) -\vm^{'}\cdot \vp>0$ for any $\vm^{'}$ in this neighborhood, so $\vm^{'}\in \vmS_I$ by Theorem~\ref{main1}. Otherwise, there exists $\vp_0\not=0$ for which $\Xi_0(\vp_0) -\vm\cdot \vp_0=0$. Then $\Xi_0(\vp_0) -\vm^{'}\cdot \vp_0<0$ for any $\vm^{'}$ for which $(\vm-\vm^{'})\cdot\vp_0<0$. By Theorem \ref{main1} it follows  that $\vm^{'}\not\in\vmS_I$ so $\vm$ is not an inner point of $\vmS_I$. The second case is proved similarly.
$\Box$\par
The set $\vmS_I$ may contain inner points.
 As an example, consider the case where $N=2$, $\psi_1$ and $\psi_2$ are continuous, positive functions  and there exists pair of point $x,y\in X$ such that $\psi_2(x)-\psi_1(x)= \psi_1(y)-\psi_2(y)>0$. If $x$ is in the support of $\mu_1$ and $y$ in the support of $\mu_2$ then  $\vmS_I$ contains an interior point. Indeed, we can move a neighborhood of $x$ from $1$ to $2$, and a neighborhood of $y$ from $2$ to $1$. This way we increased both $m_1$ and $m_2$ to obtain $(m_1^{'}, m_2^{'})\in \vmS_I$ satisfying $m_1^{'}>m_1$ and $m_2^{'}> m_2$. On the other hand we can evidently increase one of them (say $m_1$)  while decreasing $m_2$ by transferring a mass from $2$ to $1$. Then we  obtain $(m_1^{''}, m_2^{''})\in \vmS_I$ satisfying $m_1^{''}>m_1$ and $m_2^{''}< m_2$. By convexity, $\vmS_I$ contains the triangle whose vertices are $(m_1, m_2), (m_1^{'}, m_2^{'}), (m_1^{''}, m_2^{''})$, and in particular an interior point.
\vskip .2in
To prove Theorem~\ref{main1} we need some auxiliary lemmas:
\begin{lemma}
$\Xi_0$ and $\Xi_0^+$ are  convex functions on $\R^I$.
\end{lemma}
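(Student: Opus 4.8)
The plan is to prove convexity of $\Xi_0$ and $\Xi_0^+$ by showing that their integrands $\xi_0(x,\cdot)$ and $\xi_0^+(x,\cdot)$ are convex in $\vp$ for each fixed $x\in X$, and then invoking the fact that integration against the non-negative measure $\mu$ preserves convexity. Since convexity is the whole content here, I would reduce it pointwise: for fixed $x$, the function $\vp\mapsto p_i\psi_i(x)$ is linear in $\vp$ (with $\psi_i(x)$ a fixed real coefficient), and $\xi_0(x,\vp)=\max_{i\in I}p_i\psi_i(x)$ is therefore the pointwise maximum of the $N$ linear functions $\vp\mapsto p_i\psi_i(x)$.

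The key step is the standard fact that a pointwise maximum of convex (here, affine) functions is convex. Concretely, for $\vp,\vq\in\R^I$ and $t\in[0,1]$, each linear term satisfies
\be
\left(tp_i+(1-t)q_i\right)\psi_i(x)=t\,p_i\psi_i(x)+(1-t)\,q_i\psi_i(x)\leq t\,\xi_0(x,\vp)+(1-t)\,\xi_0(x,\vq),
\ee
and taking the maximum over $i\in I$ on the left gives $\xi_0(x,t\vp+(1-t)\vq)\leq t\,\xi_0(x,\vp)+(1-t)\,\xi_0(x,\vq)$. For $\xi_0^+(x,\vp)=\max(\xi_0(x,\vp),0)$ one argues identically: it is the maximum of $\xi_0(x,\cdot)$, already shown convex, and the constant function $0$, which is convex, so $\xi_0^+(x,\cdot)$ is again convex as a maximum of two convex functions.

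Finally I would integrate. Because $\mu\geq0$, applying the pointwise inequality under the integral sign yields
\be
\Xi_0(t\vp+(1-t)\vq)=\int_X\xi_0(x,t\vp+(1-t)\vq)\,d\mu\leq t\,\Xi_0(\vp)+(1-t)\,\Xi_0(\vq),
\ee
and the same computation with $\xi_0^+$ in place of $\xi_0$ gives convexity of $\Xi_0^+$. The only genuine point needing care—and the mild obstacle—is ensuring the integrals are well defined and finite so that the manipulation is legitimate: since $X$ is compact and each $\psi_i$ is continuous (convention vii), each $\psi_i$ is bounded, hence $|\xi_0(x,\vp)|\leq\max_i|p_i|\,\|\psi_i\|_\infty$ is bounded on $X$ for fixed $\vp$ and $\mu$-integrable against the finite measure $\mu$; the same bound controls $\xi_0^+$. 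With integrability in hand the inequality passes through the integral without difficulty, completing the proof.
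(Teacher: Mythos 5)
Your proof is correct and follows exactly the paper's argument: the paper also observes that $\xi_0(x,\cdot)$ and $\xi_0^+(x,\cdot)$ are convex in $\vp$ for each fixed $x$ (being maxima of linear functions, with the additional constant $0$ for $\xi_0^+$) and then concludes convexity of $\Xi_0$, $\Xi_0^+$ by integration against $\mu$. You simply spell out the standard max-of-affine-functions computation and the integrability check that the paper leaves implicit.
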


\begin{proof} By definition, $\xi_0$ and $\xi_0^+$  are convex function in $\vp$ for any $x\in X$. Hence  $\Xi_0$, $\Xi_0^+$  are convex as well from definition (\ref{0Xi0}, \ref{0Xi0+}).
\end{proof}
\begin{lemma}
If $\vm\in \vmS_I$ then
$$ \Xi_0(\vp)-\vec{m}\cdot\vp\geq 0$$
for any $\vp\in\R^I$. Likewise, if $\vm\in \uvmS_I$ then
$$ \Xi^+_0(\vp)-\vec{m}\cdot\vp\geq 0$$
for any $\vp\in\R^I$.
\end{lemma}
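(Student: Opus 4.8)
The plan is to exhibit, for a given $\vm\in\vmS_I$, a concrete weak partition realizing $\vm$, and then to estimate $\vm\cdot\vp$ from above by $\Xi_0(\vp)$ using nothing more than the elementary pointwise bound $p_i\psi_i(x)\le\xi_0(x,\vp)$. Concretely, since $\vm\in\vmS_I$ there is a partition $\oX=(X_1,\dots,X_N)\in{\cal P}^{\vpsi}_{\vm}$; setting $\mu_i:=\mu\lfloor X_i$ as in the embedding preceding (\ref{oneincl}) (or invoking Theorem~\ref{weak=strong}) we obtain measures $\vmu=(\mu_1,\dots,\mu_N)$ with $\int_X\psi_i\,d\mu_i=m_i$ and $\sum_i\mu_i=\mu$. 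Then
$$\vm\cdot\vp=\sum_i p_i m_i=\sum_i\int_X p_i\psi_i\,d\mu_i .$$

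The key (and essentially only) observation is the pointwise inequality $p_i\psi_i(x)\le\max_j p_j\psi_j(x)=\xi_0(x,\vp)$, valid for every $i\in I$ and every $x\in X$. Integrating this against each $\mu_i$, summing over $i$, and using $\sum_i\mu_i=\mu$ yields
$$\sum_i\int_X p_i\psi_i\,d\mu_i\le\sum_i\int_X\xi_0(\cdot,\vp)\,d\mu_i=\int_X\xi_0(\cdot,\vp)\,d\Big(\sum_i\mu_i\Big)=\Xi_0(\vp),$$
which is exactly inequality (a). The interchange of sum and integral is just linearity of the integral in the measure argument.

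For the subpartition statement the argument is identical but uses the truncation $\xi_0^+$. Given $\vm\in\uvmS_I$, pick a weak subpartition $\vmu\in\underline{\cal P}^{\vpsi,w}_{\vm}$ with $\int_X\psi_i\,d\mu_i=m_i$ and $\sum_i\mu_i\le\mu$. Since $\xi_0\le\xi_0^+$ we have $p_i\psi_i(x)\le\xi_0^+(x,\vp)$, whence $\vm\cdot\vp\le\int_X\xi_0^+(\cdot,\vp)\,d(\sum_i\mu_i)$. The one place where care is needed is the final step: because $\xi_0^+\ge0$ and $\sum_i\mu_i\le\mu$, integrating the nonnegative integrand against the larger measure can only increase the value, so $\int_X\xi_0^+(\cdot,\vp)\,d(\sum_i\mu_i)\le\int_X\xi_0^+(\cdot,\vp)\,d\mu=\Xi_0^+(\vp)$, giving (b). This is precisely why the truncation appears in the subpartition case and is unnecessary in the full-partition case, where $\sum_i\mu_i=\mu$ exactly and the untruncated $\xi_0$ suffices. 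Beyond correctly tracking this sign/nonnegativity point, there is no genuine obstacle.
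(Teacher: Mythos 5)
Your proof is correct and follows essentially the same route as the paper: represent $\vm$ by a weak (sub)partition, apply the pointwise bound $p_i\psi_i(x)\le\xi_0(x,\vp)$ (resp.\ $\le\xi_0^+(x,\vp)$), and sum over $i$ using $\sum_i\mu_i=\mu$ (resp.\ $\sum_i\mu_i\le\mu$ together with $\xi_0^+\ge 0$). The only differences are cosmetic: you invoke the elementary embedding $\mu_i:=\mu\lfloor X_i$ where the paper cites Theorem~\ref{weak=strong}, and you spell out the nonnegativity point in the subpartition case, which the paper dismisses with ``proved similarly.''
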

\begin{proof}
 Assume $\vm\in \vmS$.  Since $\vmS_I=\vmSw_I$ by Theorem \ref{weak=strong} then, by definition, there exists $\vmu\in\vwP$ such that
$\int_X\psi_id\mu_i=m_i$. Also, from (\ref{partweak}) ($\sum_1^N \mu_i=\mu$) and (\ref{0Xi0})
$$ \Xi_0(\vp) = \int_X \xi_0(x, \vp) d\mu = \sum_1^N\int_X\xi_0(x,\vp) d\mu_i$$
while from (\ref{11}) $\xi_0(x, \vp)\geq p_i\psi_i(x)$  so
$$ \Xi_0(\vp) \geq  \sum_1^Np_i\int_X\psi_i d\mu_i= \vp\cdot \vm \ . $$
The case for $\Xi_0^+$ is proved similarly.
\end{proof}
In order to prove the second direction of Theorem~\ref{main1} we need the following definition of {\it regularized maximizer}:
\begin{defi}
Let $\vec{a}\in \R^I$. Then, for $\eps>0$,
$$ max_\eps(\vec{a}):= \eps\ln\left(\sum_{i\in I} e^{a_i/\eps}\right)$$
\end{defi}
\begin{lemma}\label{maxreg}
For any $\eps>0$ $max_\eps(\cdot)$ is a smooth convex function on $\R^I$. In addition  $\max_{\eps_1}(\vec{a})\geq\max_{\eps_2}(\vec{a})\geq  \max_{i\in I}( a_i)$ for  any $\vec{a}\in\R^I$, $\eps_1>\eps_2>0$ and
\be\label{limmaxeps}\lim_{\eps\searrow 0}max_\eps(\vec{a})= \max_{i\in I}  a_i \ . \ee
\end{lemma}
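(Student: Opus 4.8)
The plan is to dispatch the four assertions—smoothness, convexity, monotonicity in $\eps$, and the sandwich bound that yields both the lower bound and the limit (\ref{limmaxeps})—in increasing order of difficulty, treating the $\eps$-monotonicity as the only substantive point. For smoothness, fix $\eps>0$: the map $\vec{a}\mapsto\sum_{i\in I}e^{a_i/\eps}$ is a finite sum of compositions of the smooth maps $a_i\mapsto e^{a_i/\eps}$ and is strictly positive on all of $\R^I$, so composing with $\ln$ (smooth on $(0,\infty)$) shows $\max_\eps$ is $C^\infty$. For convexity I would use H\"older's inequality, which is cleaner than a Hessian computation: for $\theta\in(0,1)$ and $\vec a,\vec b\in\R^I$ write $e^{(\theta a_i+(1-\theta)b_i)/\eps}=\big(e^{a_i/\eps}\big)^{\theta}\big(e^{b_i/\eps}\big)^{1-\theta}$ and apply H\"older with conjugate exponents $1/\theta$ and $1/(1-\theta)$ to get
\[
\sum_{i\in I}e^{(\theta a_i+(1-\theta)b_i)/\eps}\le\Big(\sum_{i\in I}e^{a_i/\eps}\Big)^{\theta}\Big(\sum_{i\in I}e^{b_i/\eps}\Big)^{1-\theta}.
\]
Applying $\eps\ln(\cdot)$ to both sides gives $\max_\eps(\theta\vec a+(1-\theta)\vec b)\le\theta\max_\eps(\vec a)+(1-\theta)\max_\eps(\vec b)$. (Equivalently, the Hessian is $\eps^{-1}(\mathrm{diag}(\rho)-\rho\rho^{\top})$ with $\rho_i=e^{a_i/\eps}/\sum_j e^{a_j/\eps}$, a covariance matrix of the weights $\rho$, hence positive semidefinite.)

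Next I would establish the two-sided estimate that handles both the lower bound and the limit at once. Bounding the sum below by its largest summand and above by $N$ copies of it gives $e^{\max_i a_i/\eps}\le\sum_{i\in I}e^{a_i/\eps}\le N\,e^{\max_i a_i/\eps}$, and applying $\eps\ln(\cdot)$ yields
\[
\max_{i\in I}a_i\ \le\ \max_\eps(\vec a)\ \le\ \max_{i\in I}a_i+\eps\ln N.
\]
The left inequality is precisely the asserted bound $\max_\eps(\vec a)\ge\max_{i\in I}a_i$ (used at $\eps=\eps_2$), and letting $\eps\searrow0$ in the two-sided estimate squeezes $\max_\eps(\vec a)$ to $\max_{i\in I}a_i$, giving (\ref{limmaxeps}).

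The remaining and only nontrivial point is monotonicity in $\eps$. Fix $\eps_1>\eps_2>0$, set $\theta:=\eps_2/\eps_1\in(0,1)$, $s:=1/\theta=\eps_1/\eps_2>1$, and $c_i:=e^{a_i/\eps_1}>0$, so that $e^{a_i/\eps_2}=c_i^{\,s}$. Dividing the desired inequality $\max_{\eps_1}(\vec a)\ge\max_{\eps_2}(\vec a)$ by $\eps_2$ and exponentiating reduces it to the elementary statement
\[
\Big(\sum_{i\in I}c_i\Big)^{s}\ \ge\ \sum_{i\in I}c_i^{\,s},\qquad s\ge1,\ c_i>0,
\]
which I would prove by homogeneity: both sides scale as $\lambda^{s}$ under $c\mapsto\lambda c$, so one normalizes $\sum_i c_i=1$; then each $c_i\in[0,1]$, hence $c_i^{\,s}\le c_i$ for $s\ge1$, and summing gives $\sum_i c_i^{\,s}\le\sum_i c_i=1=(\sum_i c_i)^{s}$. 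I expect the reduction itself—keeping straight how the factors $\eps_1,\eps_2$ and the exponent $s=\eps_1/\eps_2$ combine—to be the only place demanding care; once the inequality is isolated it is immediate, and every other assertion (smoothness, convexity, the limit) is routine.
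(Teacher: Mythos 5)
Your proof is correct, but it follows a genuinely different route from the paper's. The paper proves everything at once from the Gibbs/entropy variational representation
$$ max_\eps(\vec{a})= \max_{\vec{\beta}}\Bigl\{ -\eps\sum_1^N \beta_i\ln\beta_i + \vec{\beta}\cdot\vec{a}\Bigr\} , $$
the maximum taken over the probability simplex: monotonicity in $\eps$ is immediate because the entropy term $-\sum\beta_i\ln\beta_i$ is nonnegative (so the bracket is pointwise nondecreasing in $\eps$ for each fixed $\vec\beta$), the lower bound $\max_\eps(\vec a)\geq\max_i a_i$ comes from maximizing $\vec\beta\cdot\vec a$ alone, and the limit (\ref{limmaxeps}) follows from the entropy bound $-\sum\beta_i\ln\beta_i\leq\ln N$, which squeezes $\max_\eps(\vec a)$ between $\max_i a_i$ and $\max_i a_i+\eps\ln N$. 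You instead use three separate elementary devices: H\"older (or the covariance Hessian) for convexity, the sandwich $e^{\max_i a_i/\eps}\leq\sum_i e^{a_i/\eps}\leq Ne^{\max_i a_i/\eps}$ for the lower bound and the limit (note this is exactly the same quantitative estimate $\max_i a_i\leq\max_\eps(\vec a)\leq\max_i a_i+\eps\ln N$ the paper gets from the entropy bound), and the power-sum inequality $(\sum_i c_i)^s\geq\sum_i c_i^s$, $s\geq 1$, for the $\eps$-monotonicity; your reduction with $c_i=e^{a_i/\eps_1}$, $s=\eps_1/\eps_2$ is clean and correct. Your argument is more self-contained and elementary. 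What the paper's approach buys is not economy in this lemma but reuse: the representation (\ref{maxeps}) is invoked again, with $a_i=p_i\psi_i+\phi_i$, in the proof of Lemma~\ref{writeXi}(ii), where the entropy term must be tracked explicitly; your proof establishes the lemma as stated but does not supply that formula.
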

\begin{proof}
Follows from
\be\label{maxeps} max_\eps(\vec{a})= \max_{\vec{\beta}}\left\{ -\eps\sum_1^N \beta_i\ln\beta_i + \vec{\beta}\cdot\vec{a}\right\}\ee
where the maximum is taken on the simplex $0\leq \vec{\beta}$, $\vec{\beta}\cdot\vec{1}= 1$. Note that the maximizer is
$$ \beta^0_i=\frac{e^{a_i/\eps}}{\sum_je^{a_j/\eps}} < 1$$
for $i\in I$.  Since
$\sum_1^N\beta_i\ln\beta_i \leq 0$,  the term in brackets in (\ref{maxeps}) is monotone non-decreasing in $\eps>0$. Finally, (\ref{limmaxeps}) follows from the Jensen's inequality via $-\sum_1^N\beta_i\ln\beta_i \leq \ln N$.
\end{proof}
\begin{defi}\label{def32}
\be\label{eps11} \xi_\eps(x, \vp):= max_{\eps}\left( p_1\psi_1(x), \ldots p_N\psi_N(x)\right): X\times\R^I\rightarrow \R\ee
\be\label{eps22} \Xi_\eps(\vp):= \int_X \xi_\eps(x, \vp) d\mu: \R^I\rightarrow\R \ \ . \ee
Also, for each $\vp\in\R^I$ and $i\in I$ set
\be\label{defmuieps} \mu_i^{(\vp)}(dx):= \frac{e^{p_i\psi_i(x)/\eps}}{\sum_{j\in I} e^{p_j\psi_j(x)/\eps}} \mu(dx)\ee

Likewise
\be\label{eps11+} \xi^+_\eps(x, \vp):= max_{\eps}\left( p_1\psi_1(x), \ldots p_N\psi_N(x), 0\right): X\times\R^I\rightarrow \R\ee
\be\label{eps22+} \Xi^+_\eps(\vp):= \int_X \xi^+_\eps(x, \vp) d\mu: \R^I\rightarrow\R \ \ . \ee
and
\be\label{defmuieps+} \mu_i^{(\vp,+)}(dx):= \frac{e^{p_i\psi_i(x)/\eps}}{1+\sum_{j\in 1} e^{p_j\psi_j(x)/\eps}} \mu(dx)\ee
\end{defi}
Since $\max_\eps$ is smooth and convex due to lemma~\ref{maxreg} it follows from the above definition via an explicit differentiation.
\begin{lemma}\label{regXi}
For each $\eps>0$, $\Xi_\eps$ (res. $\Xi^+_\eps$) is a  convex and $C^\infty$ on $\R^I$. In addition
$$ \frac{\partial \Xi_\eps(\vp)}{\partial p_i}= \int_X\psi_i(x) d\mu_i^{(\vp)}\ \ \ res. \ \ \frac{\partial \Xi^+_\eps(\vp)}{\partial p_i}= \int_X\psi_i(x) d\mu_i^{(\vp,+)} $$
\end{lemma}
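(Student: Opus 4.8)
The plan is to reduce the statement to the softmax structure of $max_\eps$ together with the linearity of the map $\vp\mapsto (p_1\psi_1(x),\ldots,p_N\psi_N(x))$, and then to justify differentiation under the integral sign using the compactness of $X$ and continuity of the $\psi_i$.

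First I would settle convexity. By Lemma~\ref{maxreg} the function $max_\eps$ is convex on $\R^I$, and for each fixed $x\in X$ the map $\vp\mapsto (p_1\psi_1(x),\ldots,p_N\psi_N(x))$ is linear; hence $\xi_\eps(x,\cdot)$ is convex as a composition of a convex function with a linear map, for every $x$. Since $\Xi_\eps(\vp)=\int_X\xi_\eps(x,\vp)d\mu$ is an average (against the finite measure $\mu$) of functions convex in $\vp$, it is convex. The same argument applies verbatim to $\xi^+_\eps$ and $\Xi^+_\eps$, the extra constant entry $0$ not affecting convexity in $\vp$.

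Next I would record the pointwise derivative. A direct computation gives
$$\frac{\partial}{\partial a_i}max_\eps(\vec{a})=\frac{e^{a_i/\eps}}{\sum_{j\in I}e^{a_j/\eps}},$$
the Gibbs (softmax) weight. Substituting $a_j=p_j\psi_j(x)$ and using $\partial a_j/\partial p_i=\psi_i(x)\delta_{ij}$, the chain rule yields
$$\frac{\partial}{\partial p_i}\xi_\eps(x,\vp)=\frac{e^{p_i\psi_i(x)/\eps}}{\sum_{j\in I}e^{p_j\psi_j(x)/\eps}}\,\psi_i(x),$$
which is precisely $\psi_i(x)$ times the $\mu$-density of $\mu_i^{(\vp)}$ from (\ref{defmuieps}). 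Integrating over $X$ gives the claimed first-order formula, \emph{provided} differentiation and integration may be interchanged. The $\Xi^+_\eps$ case is identical, the denominator $\sum_j e^{p_j\psi_j(x)/\eps}$ being replaced by $1+\sum_j e^{p_j\psi_j(x)/\eps}$, in agreement with (\ref{defmuieps+}).

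The only step demanding real care — and the main obstacle — is the interchange of differentiation and integration, which also delivers the $C^\infty$ regularity. I would fix a bounded neighborhood $U$ of an arbitrary $\vp_0\in\R^I$. Because $X$ is compact and each $\psi_i$ is continuous, the quantities $p_i\psi_i(x)$ are uniformly bounded on $X\times U$; moreover every partial derivative of $max_\eps$ is a bounded rational expression in the Gibbs weights, each of which lies in $[0,1]$. Consequently all $\vp$-derivatives of $(x,\vp)\mapsto\xi_\eps(x,\vp)$ of every order are bounded uniformly on $X\times U$ by a constant depending only on $\eps$, the order, and the $\|\psi_i\|_\infty$. Since $\mu$ is finite, these uniform bounds serve as integrable dominating functions, so the standard theorem on differentiation under the integral sign applies iteratively, yielding $\Xi_\eps\in C^\infty(\R^I)$ together with the first-order formula. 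As $\vp_0$ is arbitrary this holds on all of $\R^I$, and the argument for $\Xi^+_\eps$ is the same. Everything beyond this interchange is a mechanical softmax computation, and because the weights are automatically bounded by $1$ the dominating functions are immediate, so even this step is routine.
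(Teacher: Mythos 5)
Your proof is correct and follows essentially the same route as the paper, which disposes of this lemma in one line ("Since $\max_\eps$ is smooth and convex\ldots it follows from the above definition via an explicit differentiation"): convexity of $\xi_\eps(x,\cdot)$ as $max_\eps$ composed with a linear map, the softmax derivative formula, and integration over $X$. The only thing you add is the explicit domination argument justifying differentiation under the integral sign, which the paper leaves implicit; this is a welcome but not substantively different elaboration.
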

The proof of Theorem~\ref{main1} follows from the following Lemma
\begin{lemma}\label{correg}
For any $\eps, \delta>0$ and $\vm\in\R^I$
\be\label{Xicor}\vp\rightarrow \Xi_\eps(\vp) + \frac{\delta}{2}|\vp|^2-\vm\cdot\vp\ee
is a  strictly convex function on $\R^I$. In addition
\be\label{XiepsgeqXi0} \Xi_\eps(\vp) + \frac{\delta}{2}|\vp|^2-\vm\cdot\vp\geq \Xi_0(\vp)-\vm\cdot\vp+ \frac{\delta}{2}|\vp|^2\ee
so, if  (\ref{ineq0}) is satisfied,  then $\vp\rightarrow \Xi_\eps(\vp) + \frac{\delta}{2}|\vp|^2-\vm\cdot\vp$
is a coercive function as well. The same statement holds for $\Xi_\eps^+$ as well.
\end{lemma}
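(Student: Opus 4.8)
The statement splits into three claims about the map $\vp\mapsto \Xi_\eps(\vp)+\tfrac{\delta}{2}|\vp|^2-\vm\cdot\vp$: strict convexity, the pointwise lower bound \eqref{XiepsgeqXi0}, and coercivity under hypothesis \eqref{ineq0}. I would treat them in that order, since the second feeds directly into the third.

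For strict convexity, the plan is to observe that $\Xi_\eps$ is already convex (Lemma \ref{regXi}) and that $\vm\cdot\vp$ is linear, so the only source of strict convexity is the quadratic term $\tfrac{\delta}{2}|\vp|^2$. Concretely, for any two distinct points the second difference of $\Xi_\eps+\tfrac\delta2|\vp|^2-\vm\cdot\vp$ is bounded below by the second difference of $\tfrac\delta2|\vp|^2$, which is $\tfrac\delta2|\vp-\vq|^2>0$; convexity of $\Xi_\eps$ guarantees its own contribution is non-negative. Since $\delta>0$ this gives strict convexity. (Equivalently one can cite that the Hessian of $\Xi_\eps$ is positive semidefinite from Lemma \ref{regXi}, so adding $\delta\,\mathrm{Id}$ makes it positive definite, but the second-difference argument avoids invoking the full $C^\infty$ regularity.)

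For the lower bound \eqref{XiepsgeqXi0}, after cancelling the common terms $\tfrac\delta2|\vp|^2-\vm\cdot\vp$ from both sides the inequality reduces to $\Xi_\eps(\vp)\geq\Xi_0(\vp)$, and this is immediate from the monotonicity clause of Lemma \ref{maxreg}: pointwise in $x$ one has $\max_\eps\big(p_1\psi_1(x),\ldots,p_N\psi_N(x)\big)\geq\max_{i\in I}p_i\psi_i(x)=\xi_0(x,\vp)$, and integrating against $\mu$ via \eqref{eps22} and \eqref{0Xi0} yields $\Xi_\eps(\vp)\geq\Xi_0(\vp)$.

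For coercivity, the idea is to combine \eqref{XiepsgeqXi0} with hypothesis \eqref{ineq0}-a. Under \eqref{ineq0} we have $\Xi_0(\vp)-\vm\cdot\vp\geq 0$ for all $\vp$, so the right-hand side of \eqref{XiepsgeqXi0} is at least $\tfrac\delta2|\vp|^2$, which tends to $+\infty$ as $|\vp|\to\infty$; hence the left-hand side is coercive. The main obstacle — and the only place requiring care — is the $\Xi_\eps^+$ case, where the hypothesis \eqref{ineq0}-b only controls $\Xi_0^+(\vp)-\vm\cdot\vp\geq0$; there I would repeat the identical structure, noting that $\xi_\eps^+\geq\xi_0^+$ pointwise (again Lemma \ref{maxreg}, now applied to the augmented list including the entry $0$), giving $\Xi_\eps^+(\vp)\geq\Xi_0^+(\vp)$ and the analogue of \eqref{XiepsgeqXi0}, whence coercivity follows the same way from \eqref{ineq0}-b. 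Strict convexity of the $\Xi_\eps^+$ functional is verbatim the quadratic-term argument above.
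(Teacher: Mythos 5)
Your proof is correct and is exactly the argument the paper intends: the paper actually states Lemma~\ref{correg} without any written proof, treating it as immediate from the convexity of $\Xi_\eps$ (Lemma~\ref{regXi}), the domination $max_\eps(\vec{a})\geq\max_i a_i$ of Lemma~\ref{maxreg} (which, integrated, gives $\Xi_\eps\geq\Xi_0$, i.e.\ (\ref{XiepsgeqXi0})), and hypothesis (\ref{ineq0}) for coercivity --- precisely your three steps, including the observation that the $\Xi^+_\eps$ case only requires applying Lemma~\ref{maxreg} to the list augmented by the entry $0$. One immaterial nit: the midpoint second difference of $\frac{\delta}{2}|\vp|^2$ is $\frac{\delta}{8}|\vp-\vq|^2$, not $\frac{\delta}{2}|\vp-\vq|^2$, but only its positivity is used, so the strict convexity argument stands.
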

\par\noindent {\it Proof of Theorem~\ref{main1}} \\
From Lemma~\ref{correg} we obtain at once the existence of a minimizer $\vp_{\eps,\delta}\in\R^I$ of (\ref{Xicor}) for any $\eps,\delta>0$, provided (\ref{ineq0}) holds.  Moreover, from Lemma~\ref{regXi} we also get for that minimizer $\vp^{\eps,\delta}$ satisfies
\be\label{mi=delXi}m_i= \frac{\partial\Xi_\eps}{\partial p_i^{\eps,\delta}}+\delta p^{\eps,\delta}_i = \int_X \psi_id \mu_i^{(p^{\eps,\delta})}+\delta p^{\eps,\delta}_i\ee
By convexity of $\Xi_\eps$:
$$ \nabla\Xi_\eps(\vp) \cdot \vp \geq \Xi_\eps(\vp) - \Xi_\eps(\vec{0})$$
Multiply  (\ref{mi=delXi}) by $\vp^{\eps,\delta}$ to obtain
 \be\label{ineq1}\vp^{\eps,\delta}\cdot \nabla\Xi(\vp^{\eps,\delta})+ \delta \left| \vp^{\eps,\delta}\right|^2-\vm\cdot \vp^{\eps,\delta}=0\geq \Xi_\eps(\vp^{\eps,\delta}) - \Xi_\eps(\vec{0}) + \delta \left| \vp^{\eps,\delta}\right|^2-\vm\cdot \vp^{\eps,\delta}\ee
It follows from (\ref{ineq0}, \ref{XiepsgeqXi0},\ref{ineq1}) that
$$  - \Xi_\eps(\vec{0}) + \delta \left| \vp^{\eps,\delta}\right|^2\leq 0$$
hence
$$ \delta\left| \vp^{\eps,\delta}\right| \leq \sqrt{\delta}\sqrt{\Xi_\eps(\vec{0})} \ . $$
Hence (\ref{mi=delXi}) implies
$$ \lim_{\delta\rightarrow 0} \int_X \psi_id \mu_i^{(p^{\eps,\delta})} = m_i $$
By compactness of $C^*(X)$ and since $\sum_1^N\mu_i^{(p^{\eps,\delta})}=\mu$ via (\ref{defmuieps} ) we can choose a subsequence $\delta\rightarrow 0$ along which the limits
$$ \lim_{\delta\rightarrow 0}  \mu_i^{(p^{\eps,\delta})} := \mu_i^\eps$$
holds. It follows that
$$ \sum_1^N\mu_i^{(p^{\eps})}=\mu \ \ \ ; \ \ \ \int_X \psi_i d\mu_i^\eps=m_i \ . $$
Again, the proof for $\vm\in \uvmS_I$ is analogous.
$\Box$
\section{Weak optimal  (sub)partitions}\label{optimweak}
\subsection{Existence and characterization of weak (sub)partitions}\label{optimweak1}
Let $K\subset \R^I$ be a closed set. Recall
\be\label{daa2}
\underline{\cal P}^\psi_K:= \cup_{\vm\in K} \underline{\cal P}^{ \vec{\psi}}_{\vm} \ .  \ee

\begin{assumption}\label{assd}
The components of the function  $\vphi=(\phi_1, \ldots \phi_N) :X\rightarrow \R^I$
are upper sami continuous (usc)  and bounded on $X$.
\end{assumption}
\begin{theorem}\label{main2}
There exists a weak subpartition $\vmu$ which maximize
 the total profit
$ \int_X \vphi\cdot d\vmu$ in $\underline{\cal P}^\psi_K$.
\end{theorem}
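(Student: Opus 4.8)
The plan is to prove existence by the direct method of the calculus of variations, working throughout in the weak (Kantorovich) formulation. The natural search space is the set of weak subpartitions whose mass vector lies in $K$, namely $\cup_{\vm\in K}\underline{\cal P}^{\vec\psi,w}_{\vm}$; by Theorem~\ref{weak=strong} this realizes the same admissible mass vectors as the strong set $\underline{\cal P}^\psi_K$, so it is harmless to relax to measures from the outset. I will assume this set is nonempty (equivalently $K\cap\uvmS_I\neq\emptyset$), since otherwise the assertion is vacuous. Because each $\phi_i$ is bounded (Assumption~\ref{assd}), the objective satisfies $|\int_X\vphi\cdot d\vmu|\leq\sum_1^N\|\phi_i\|_\infty$, so its supremum $M$ over the admissible set is finite.

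The first substantive step is to endow the $N$-tuples of measures with the weak-$*$ topology inherited from $C(X)^*$ and to verify compactness of the admissible set. The ambient set $\{\vmu:\ \mu_i\geq 0,\ \sum_1^N\mu_i\leq\mu\}$ is weak-$*$ compact: each coordinate ranges over nonnegative measures of total mass at most $\mu(X)=1$, which is weak-$*$ compact by Banach--Alaoglu (and metrizable, $X$ being compact Polish), and the product of $N$ copies stays compact. Within this set, the conditions $\mu_i\geq 0$ and $\sum_1^N\mu_i\leq\mu$ are weak-$*$ closed; and since each $\psi_i$ is continuous (notation~(vii)) the map $\vmu\mapsto\big(\int_X\psi_1 d\mu_1,\ldots,\int_X\psi_N d\mu_N\big)$ is weak-$*$ continuous, so, $K$ being closed, the requirement that this vector lie in $K$ cuts out a weak-$*$ closed subset. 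Hence the admissible set is a closed subset of a compact set, thus weak-$*$ compact.

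The second, and key, step is to show that the objective $\vmu\mapsto\sum_1^N\int_X\phi_i d\mu_i$ is upper semicontinuous for weak-$*$ convergence. This is exactly where Assumption~\ref{assd} enters: each $\phi_i$ is merely usc, not continuous, so $\int_X\phi_i d\mu_i$ is not weak-$*$ continuous, only usc. I would establish this by approximating each bounded usc $\phi_i$ from above by a decreasing sequence of bounded Lipschitz functions $f^{(k)}\downarrow\phi_i$, for instance via the sup-convolutions $f^{(k)}(x)=\sup_{y\in X}\big(\phi_i(y)-k\,d(x,y)\big)$. Since each $\mu_i$ is finite, monotone convergence gives $\int_X\phi_i d\mu_i=\inf_k\int_X f^{(k)} d\mu_i$, exhibiting $\vmu\mapsto\int_X\phi_i d\mu_i$ as an infimum of weak-$*$ continuous functionals, hence usc. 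Summing over $i$, the whole objective is weak-$*$ usc; note that the direction of semicontinuity matches the maximization precisely because the $\phi_i$ are \emph{upper} semicontinuous.

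With these ingredients the conclusion is immediate: an upper semicontinuous function on a nonempty weak-$*$ compact set attains its maximum. Concretely, take a maximizing sequence $\vmu^{(n)}$ with $\int_X\vphi\cdot d\vmu^{(n)}\to M$; by compactness pass to a weak-$*$ convergent subsequence $\vmu^{(n)}\to\vmu^*$; by closedness $\vmu^*$ is admissible; and by upper semicontinuity $\int_X\vphi\cdot d\vmu^*\geq\limsup_n\int_X\vphi\cdot d\vmu^{(n)}=M$, so $\vmu^*$ realizes the maximum. The main obstacle is the third step: the only delicate point is upper semicontinuity of the profit functional under the mere usc regularity of $\vphi$, which forces the monotone-approximation argument rather than a direct continuity statement; everything else is soft compactness and closedness bookkeeping enabled by the compactness of $X$ and the continuity of the $\psi_i$.
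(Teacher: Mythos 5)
Your proposal is correct and follows essentially the same route as the paper: the paper's proof is a two-line direct-method argument (weak-$*$ compactness of the admissible set, closedness via continuity of the $\psi_i$ and closedness of $K$, upper semicontinuity of the objective from Assumption~\ref{assd}), and you have simply filled in the details it leaves implicit, including the sup-convolution approximation establishing weak-$*$ upper semicontinuity of $\vmu\mapsto\int_X\vphi\cdot d\vmu$ and the clarification that the admissible set should be read as $\cup_{\vm\in K}\underline{\cal P}^{\vec\psi,w}_{\vm}$.
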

The proof of Theorem \ref{main2} is almost immediate. Since $\psi_i$ are continuous  by standing assumption, the set $\underline{\cal P}^\psi_K$ is weakly closed.   Since $\phi_i$ are u.s.c by Assumption \ref{assd},  the limit of a maximizing sequence is a maximizer.
\subsection{Dual representation}\label{sduoptim}
Our next object is to characterize the set of optimal (sub)partitions. For this we turn back to the dual formulation.
\par\noindent
   Define the function
$\xiphi:X\times \R^I\rightarrow \R$ as
\be\label{xi+} \xiphi(x, \vp) := \max \left\{ \phi_1(x) + p_1\psi_1(x), \ldots , \phi_N(x) + p_N\psi_N (x)\right\} \ . \ee
Likewise
\be\label{xi+0} \xiphi^+(x, \vp) := \max \left\{ \phi_1(x) + p_1\psi_1(x), \ldots , \phi_N(x) + p_N\psi_N (x) , 0\right\}\ee
 Set
\be\label{Xiphi} \Xi_\phi (\vp):= \int_X \xiphi(x, \vp) d\mu(x):\R^I \rightarrow \R\ee
\be\label{Xiphi+} \Xi_\phi^+ (\vp):= \int_X \xiphi^+(x, \vp) d\mu(x):\R^I \rightarrow \R\ee
and $\Xi_\phi^*$, $ {(\Xi_\phi^+}^*):\R^I\rightarrow \R\cup \{-\infty\}$ as
\be\label{xistar} \Xi_\phi^*(\vm) = \inf_{\vp\in \R^I} \left[ \Xi_\phi(\vp) - \vm\cdot\vp\right] \ \ \ ; \ \ \
 {\Xi_\phi^+}^*(\vm) = \inf_{\vp\in \R^I} \left[ \Xi_\phi^+(\vp) - \vm\cdot\vp\right]
\ee
for $\vm\in\R^I$.

Recall that the {\it essential domain} of the concave function $F:\R^I\rightarrow \R\cup \{-\infty\}$ is the set  $\{\vm; \ F(\vm)>-\infty\}$.
\begin{lemma}\label{essd}
$\Xi_\phi^*$ (res. $\uXi^*$) is a concave function on $\R^I$.  The essential domain of \ $\Xi_\phi^*$ (res.  $\uXi^*$ )
 \  is $\vmS_I$ (res. $\uvmS_{I}$).
\end{lemma}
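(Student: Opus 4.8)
The plan is to handle concavity and the identification of the essential domain separately; the first is purely formal, while the second reduces to Theorem~\ref{main1} via a uniform comparison between $\Xi_\phi$ and $\Xi_0$.

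For concavity, fix $\vp\in\R^I$ and observe that $\vm\mapsto \Xi_\phi(\vp)-\vm\cdot\vp$ is affine in $\vm$; here $\Xi_\phi(\vp)$ is a finite real number because $\xiphi(\cdot,\vp)$ is bounded on $X$ (the $\phi_i$ are bounded by Assumption~\ref{assd} and each $p_i\psi_i$ is bounded on the compact $X$). By the definition (\ref{xistar}), $\Xi_\phi^*$ is the pointwise infimum over $\vp$ of this family of affine functions, hence concave (and automatically upper semicontinuous). The identical remark applies to $\uXi^*$, so I would dispose of the concavity claim in one line.

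For the essential domain, set $M:=\max_{i\in I}\|\phi_i\|_\infty<\infty$. The key observation is the uniform estimate $|\xiphi(x,\vp)-\xi_0(x,\vp)|\leq M$ for all $x\in X$ and $\vp\in\R^I$, which holds because inserting the bounded shifts $\phi_i$ inside the maximum defining $\xiphi$ moves it by at most $M$ relative to $\xi_0$; integrating against the probability measure $\mu$ yields $|\Xi_\phi(\vp)-\Xi_0(\vp)|\leq M$ for every $\vp$. Now I invoke Theorem~\ref{main1}. If $\vm\in\vmS_I$, then $\Xi_0(\vp)-\vm\cdot\vp\geq 0$ for all $\vp$, hence $\Xi_\phi(\vp)-\vm\cdot\vp\geq -M$ for all $\vp$, so $\Xi_\phi^*(\vm)\geq -M>-\infty$ and $\vm$ lies in the essential domain. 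Conversely, if $\vm\notin\vmS_I$ there is $\vp_0$ with $\Xi_0(\vp_0)-\vm\cdot\vp_0<0$; since $\xi_0(x,\cdot)$, and therefore $\Xi_0$, is positively homogeneous of degree one while $\vm\cdot\vp$ is linear, evaluation along the ray $t\vp_0$ gives $\Xi_\phi(t\vp_0)-\vm\cdot(t\vp_0)\leq t\big(\Xi_0(\vp_0)-\vm\cdot\vp_0\big)+M\to-\infty$ as $t\to+\infty$, so $\Xi_\phi^*(\vm)=-\infty$. This proves that the essential domain of $\Xi_\phi^*$ is exactly $\vmS_I$. The subpartition case is handled verbatim: replace $\xi_0,\Xi_0,\vmS_I$ by $\xi_0^+,\Xi_0^+,\uvmS_I$, use part (b) of Theorem~\ref{main1}, and note that $\xi_0^+(x,\cdot)$ is still positively homogeneous of degree one, so the same ray argument forces $\uXi^*(\vm)=-\infty$ off $\uvmS_I$.

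The only delicate point — the mild ``main obstacle'' — is the inclusion \{essential domain\}$\subseteq\vmS_I$: one must use positive homogeneity to amplify the single violating direction $\vp_0$ supplied by Theorem~\ref{main1} into genuine unbounded descent of the affine slices, rather than being content with a negative value at one point. Everything else is a bounded-perturbation estimate that passes unchanged from $\Xi_0$ to $\Xi_\phi$.
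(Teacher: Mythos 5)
Your proof is correct and takes essentially the same route as the paper: a uniform comparison $|\Xi_\phi(\vp)-\Xi_0(\vp)|\leq \|\vphi\|_\infty$ reduces the identification of the essential domain to Theorem~\ref{main1}. The only difference is cosmetic: you spell out the positive-homogeneity ray argument (and the infimum-of-affine-functions concavity step) that the paper compresses into the remark that $\Xi_0(\vp)-\vm\cdot\vp$ is bounded below on $\R^I$ iff it is nonnegative there.
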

\begin{proof}
 Comparing the definitions of $\Xi_\phi$ to that of $\Xi_0$ we obtain \\ $\Xi_\phi(x, \vp) - \|\vphi\|_\infty\leq \Xi_0(x,\vp)\leq \Xi_\phi(x, \vp) +\|\vphi\|_\infty$ for any $x\in X$ and any $\vp\in \R^I$. It follows
 $ \Xi_0(\vp) -\|\vphi\|_\infty\leq \Xi_\phi(\vp)\leq \Xi_0(\vp) + \|\vphi\|_\infty$ for any $\vp\in \R^I$ as well.  It follows that $\Xi_\phi(\vp)-\vp\cdot\vm$ is bounded from below iff $\Xi_0(\vp)-\vp\cdot\vm$ is bounded from below.  Note that  $\Xi_0(\vp)-\vp\cdot\vm$ is bounded from below on $\R^I$ iff  $\Xi_0(\vp)-\vp\cdot\vm\geq 0 $ on $\R^I$. Theorem~\ref{main1} then implies that $\vm\in \vmS_I$ iff $\vm$ is in the essential domain of $\Xi^*_\phi$. Same proof for $\uXi^*$.
\end{proof}
\subsection{From duality to  optimal partition}\label{sduoptim1}
We now investigate the sub-gradient of $\Xi_\phi$ and $\uXi$. Recall that $\vm\in\partial_{\vp}F$ iff
 $$F(\vq)-F(\vp) \geq \vm\cdot(\vq-\vp) \ $$
 for any $\vq\in\R^I$. \par
 Let us consider the {\it positive simplex of measures}
$$ \underline{\cal P}:= \left\{ \vec{\mu}= (\mu_1, \ldots \mu_N) , \ \ \mu_i\geq 0, \ \ \ \sum_1^N \mu_i\leq\mu\right\}$$
$$ {\cal P}:= \left\{ \vec{\mu}= (\mu_1, \ldots \mu_N) , \ \ \mu_i\geq 0, \ \ \ \sum_1^N \mu_i=\mu\right\}$$
For each $\vec{\mu}\in\underline{\cal P}$ we consider the vector
\be\label{vmeasure}\vm(\vec{\mu}):= \left( \int \psi_1d\mu_1, \ldots \int \psi_Nd\mu_N\right)\in \R^I \ . \ee
\begin{lemma}\label{writeXi}For any $\vp\in\R^I$ there exists ${\cal P}_{\vp}\subset {\cal P}$, ${\cal P}_{\vp}\not=\emptyset$,
(res. $\underline{\cal P}_{\vp}\subset \underline{\cal P}$, $\underline{\cal P}_{\vp}\not=\emptyset$,) such that
\begin{description}
\item{i)} $\vm\in \partial_{\vp}\Xi_\phi$ (res.  $\vm\in \partial_{\vp}\uXi$) iff $\vm = \vm(\vec{\mu})$ for some $\vec{\mu}\in {\cal P}_{\vp}$
(res. $\vec{\mu}\in \underline{\cal P}_{\vp}$).
\item{ii)} For any $\vec{\mu}\in {\cal P}_{\vp}$ (res. $\vec{\mu}\in \underline{\cal P}_{\vp}$), $\Xi_\phi(\vp)= \vm(\vec{\mu})\cdot \vp +  \int_X \vphi\cdot d\vmu$ (res. $\uXi(\vp)= \vm(\vec{\mu})\cdot \vp +  \int_X \vphi\cdot d\vmu$).
\end{description}
\end{lemma}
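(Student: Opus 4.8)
The plan is to identify the subdifferential $\partial_\vp\Xi_\phi$ with the set of weak partitions that ``saturate'' the pointwise maximum, and then to prove equality by comparing support functions. Throughout write $h_i(x):=\phi_i(x)+p_i\psi_i(x)$, so that $\xiphi(x,\vp)=\max_{i\in I}h_i(x)$, and let $I_\vp(x):=\{i\in I:\ h_i(x)=\xiphi(x,\vp)\}$ be the (Borel) set of active indices at $x$. I would take
$${\cal P}_\vp:=\Big\{\vmu\in{\cal P}\ ;\ \Xi_\phi(\vp)=\vm(\vmu)\cdot\vp+\int_X\vphi\cdot d\vmu\Big\},$$
and $\underline{\cal P}_\vp\subset\underline{\cal P}$ analogously with $\uXi$ and $\xiphi^+$. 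Then (ii) holds by construction, and the first step is to see that this level-set description is the geometric one: since $h_i\le\xiphi$ pointwise and $\mu_i\ge0$ with $\sum_i\mu_i=\mu$, one has $\vm(\vmu)\cdot\vp+\int\vphi\cdot d\vmu=\sum_i\int h_i\,d\mu_i\le\int\xiphi\,d\mu=\Xi_\phi(\vp)$, with equality iff $\int(\xiphi-h_i)\,d\mu_i=0$ for every $i$, i.e. iff each $\mu_i$ is carried by $\{h_i=\xiphi\}$, i.e. $i\in I_\vp(x)$ for $\mu_i$-a.e. $x$. Thus ${\cal P}_\vp$ is exactly the set of weak partitions concentrated on the active sets.

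Next I would record the structural facts. Non-emptiness: the Borel selection $i^*(x):=\min I_\vp(x)$ together with $\mu_i:=\mu\lfloor\{i^*=i\}$ produces an element of ${\cal P}_\vp$. Compactness and convexity: the functional $\vmu\mapsto\sum_i\int h_i\,d\mu_i$ is linear and, because each $h_i$ is u.s.c. and bounded (Assumption \ref{assd}), weak-$*$ u.s.c.; hence ${\cal P}_\vp=\{\sum_i\int h_i\,d\mu_i\ge\Xi_\phi(\vp)\}\cap{\cal P}$ is weak-$*$ closed and convex, so weak-$*$ compact. Consequently $A_\vp:=\{\vm(\vmu):\vmu\in{\cal P}_\vp\}$ is a nonempty compact convex subset of $\R^I$, being the image of ${\cal P}_\vp$ under the weak-$*$ continuous linear map $\vm(\cdot)$ (here $\psi_i\in C(X)$ is used).

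The inclusion $A_\vp\subseteq\partial_\vp\Xi_\phi$ is then immediate: for $\vmu\in{\cal P}_\vp$ and any $\vq$, $\vm(\vmu)\cdot\vq+\int\vphi\cdot d\vmu=\sum_i\int(\phi_i+q_i\psi_i)\,d\mu_i\le\Xi_\phi(\vq)$, while by (ii) the left side at $\vq=\vp$ equals $\Xi_\phi(\vp)$; subtracting gives $\Xi_\phi(\vq)-\Xi_\phi(\vp)\ge\vm(\vmu)\cdot(\vq-\vp)$. For the reverse inclusion I would match support functions. Since $\Xi_\phi$ is a finite convex function (an integral of maxima of affine functions of $\vp$, with $\phi_i,\psi_i$ bounded), its subdifferential is compact convex with support function equal to the one-sided directional derivative $\Xi_\phi'(\vp;\vec d)$. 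Differentiating the pointwise maximum of the affine-in-$t$ functions $t\mapsto h_i(x)+t\,d_i\psi_i(x)$ gives $\tfrac{d}{dt}\big|_{0^+}\xiphi(x,\vp+t\vec d)=\max_{i\in I_\vp(x)}d_i\psi_i(x)$; as the difference quotients are bounded by $\max_i|d_i|\,\|\psi_i\|_\infty$, dominated convergence yields $\Xi_\phi'(\vp;\vec d)=\int_X\max_{i\in I_\vp(x)}d_i\psi_i(x)\,d\mu$. On the other hand, maximizing $\vm(\vmu)\cdot\vec d=\sum_i\int d_i\psi_i\,d\mu_i$ over $\vmu\in{\cal P}_\vp$, by a Borel selection placing at each $x$ all the mass on the index of $I_\vp(x)$ realizing $\max_{i\in I_\vp(x)}d_i\psi_i(x)$, shows $\sigma_{A_\vp}(\vec d)$ equals the same integral. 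Hence $A_\vp$ and $\partial_\vp\Xi_\phi$ are closed convex sets with identical support functions, so they coincide, which is (i).

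Finally, the subpartition statement is handled identically after adjoining the constant option $0$: one replaces $\xiphi$ by $\xiphi^+(x,\vp)=\max(h_1(x),\dots,h_N(x),0)$ and ${\cal P}$ by $\underline{\cal P}$, enlarging the active set by the index $0$, whose affine piece is $\equiv0$ (it contributes the slack $\mu-\sum_i\mu_i$, carried by $\{\xiphi^+=0\}$, and a zero gradient). The only delicate point I foresee is the interchange of maximum and integral, both in computing $\Xi_\phi'(\vp;\vec d)$ and in attaining $\sigma_{A_\vp}(\vec d)$; this I expect to be the main (though routine) obstacle, dispatched by the boundedness of the $\psi_i$ via dominated convergence together with a Borel selection of the active index. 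The upper semicontinuity of the $\phi_i$ enters precisely to guarantee that ${\cal P}_\vp$ (and $\underline{\cal P}_\vp$) is weakly closed, hence compact.
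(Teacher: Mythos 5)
Your proof is correct, but it follows a genuinely different route from the paper's. The paper constructs ${\cal P}_{\vp}$ by regularization: it replaces the pointwise maximum by the smooth approximation $\max_\eps$ (log--sum--exp), obtains smooth convex functions $\Xi^\eps_\phi$ whose gradients are given explicitly by the Gibbs-type measures $\mu^{\vphi,\vp}_{\eps,i}$ of (\ref{defmuiepsphi}), invokes Mosco convergence $\Xi^\eps_\phi\to\Xi_\phi$ together with Attouch's theorem (Theorem 3.66 of [\ref{A}]) on graph convergence of subdifferentials, and then \emph{defines} ${\cal P}_{\vp}$ as the set of weak-$*$ limit points of $\left(\mu^{\vphi,\vp_\eps}_{\eps,1},\ldots,\mu^{\vphi,\vp_\eps}_{\eps,N}\right)$ along sequences $\vp_\eps\to\vp$; part (ii) is then extracted from the entropy identity of Lemma~\ref{maxreg} in the limit $\eps\searrow 0$. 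You instead define ${\cal P}_{\vp}$ intrinsically, as the weak partitions carried by the active sets $\{h_i=\xiphi(\cdot,\vp)\}$, get (ii) from the trivial equality-case analysis, and prove (i) by matching support functions: the support function of $\partial_{\vp}\Xi_\phi$ is the one-sided directional derivative, which a Danskin-type pointwise computation plus dominated convergence identifies as $\int_X\max_{i\in I_\vp(x)}d_i\psi_i\,d\mu$ for a direction $\vec{d}$, and a second Borel selection shows this is exactly the support function of $\vm({\cal P}_{\vp})$. Both arguments are sound; the delicate points you flag (weak-$*$ upper semicontinuity of $\vmu\mapsto\sum_i\int h_i\,d\mu_i$, which is where the upper semicontinuity of $\vphi$ in Assumption~\ref{assd} enters; Borel measurability of the selections; the max/integral interchange, justified by the uniform bound $\max_i|d_i|\,\|\psi_i\|_\infty$ on the difference quotients) are handled correctly. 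Your route is more elementary and self-contained -- it avoids the Mosco/$G$-convergence machinery cited from [\ref{A}] -- and it yields strictly more information: an explicit description of ${\cal P}_{\vp}$ as the measures concentrated on the active sets, and an exact formula for the directional derivatives of $\Xi_\phi$ and $\uXi$. What the paper's approach buys is economy within its own framework: the $\eps$-regularization and the measures $\mu^{\vphi,\vp}_{\eps,i}$ are already set up and exploited in the proof of Theorem~\ref{main1}, so the same apparatus dispatches both parts of the lemma at once.
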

\begin{proof} We present the proof for $\Xi_\phi$. The proof for $\uXi$ is analogous.
\begin{description}
\item{i)}
Let
\be\label{eps11phi} {\xiphi}^\eps(x, \vp):= max_{\eps}\left( p_1\psi_1(x)+\phi_1(x), \ldots p_N\psi_N(x)+\phi_N(x)\right): X\times\R^I\rightarrow \R\ee
\be\label{eps22phi} \Xi_\phi^\eps(\vp):= \int_X {\xiphi}^\eps(x, \vp) d\mu: \R^I\rightarrow\R \ \ . \ee
and
\be\label{defmuiepsphi} \mu_{\eps,i}^{\vphi,\vp}(dx):= \frac{\exp\left(\frac{p_i\psi_i(x)+\phi_i(x)}{\eps}\right)}{
\sum_{j=1}^N \exp\left(\frac{p_j\psi_j(x)+\phi_j(x))}{\eps}\right)} \mu(dx)\ \ , \ \ i\in \In \ . \ee
As in Lemma~\ref{regXi} we obtain that $\Xi^\eps_\phi$ is a smooth, convex function and the sequence $\Xi^\eps_\phi$ satisfies
$\lim_{\eps\rightarrow 0} \Xi^\eps_\phi=\Xi_\phi$ pointwise. In addition, Lemma~\ref{maxreg} also implies that this sequence is monotone decreasing. This implies, in particular, that $\Xi^\eps_\phi\rightarrow \Xi_\phi$ in the {\it Mosco- sense} (c.f. [\ref{A}]).
In addition
\be\label{nablagrad} \frac{\partial \Xi^\eps_\phi(\vp)}{\partial p_i}= \int_X\psi_i(x) d\mu_{\eps,i}^{\vphi, \vp}\ . \ee
By Theorem~3.66 in [\ref{A}] it follows that $\partial\Xi^\eps_\phi\rightarrow \partial \Xi_\phi$ in the sense of $G-$convergence, that is:
$$ \forall  (\vp, \vz)\in \partial \Xi_\phi , \ \ \exists (\vp_\eps, \zeta_\eps)\in\partial\Xi^\eps_\phi \ \ , \ \ \ \vp_\eps\rightarrow \vp  \ \text{and} \ \ \vec{\zeta}_\eps\rightarrow\vec{\zeta} \  \ \text{for} \ \eps\searrow 0 . $$
Since   $\partial_{\vp}\Xi^\eps_\phi=\{\nabla_{\vp}\Xi^\eps_\phi\}$  we obtain that $\vz\in\partial_{\vp}\Xi_\phi$ \  iff there exists a sequence $\vp_\eps\rightarrow\vp$ and $\nabla_{\vp_\eps}\Xi^\eps_\phi\rightarrow\vz$ as $\eps\rightarrow 0$. By (\ref{nablagrad}) $$\nabla_{\vp_\eps}\Xi^\eps_\phi=\vm\left(\overrightarrow{\mu_{\eps}^{\vphi,\vp_\eps}}\right) $$
where $\overrightarrow{\mu_{\eps}^{\vphi,\vp_\eps}}= \left( \mu_{\eps,1}^{\vphi,\vp_\eps}, \ldots \mu_{\eps,N}^{\vphi,\vp_\eps}\right)$. Let ${\cal P}_{\vp}$ be the sets of limits (in $C^*(X)$) of all sequences $$\overrightarrow{\mu_{\eps}^{\vphi,\vp_\eps}},  \ \ \ \eps\rightarrow 0 \ \ .  $$ Since $X$ is compact, ${\cal P}_{\vp}$ is non-empty for any $\vp\in\R^I$. In addition we obtain $\vm\in\partial_{\vp}\Xi_\phi$ iff there exists $\vec{\mu}\in{\cal P}_{\vp}$ for which $\vm=\vm(\vec{\mu})$.
\item{ii)} By Lemma~\ref{maxreg} with $a_i:= p_{\eps,i}\vpsi_i+\phi_i$ we obtain, after integration of $\max_\eps(\vec{a})$ over $X$ with respect to $\mu$:
$$ \Xi^\eps_\phi(\vp_\eps)= -\eps\sum_{i\in  I} \int_X \ln\left( \frac{d\mu_{\eps, i}^{\vphi, \vp_\eps}}{d\mu}\right) d\mu(x) +  \sum_{i\in I}\int_X\left( p_{\eps,i}\psi_i+\phi_i\right) d\mu_{\eps, i}^{\vphi, \vp_\eps}(x) \ . $$
Note that $\frac{d\mu_{\eps, i}^{\vphi, \vp_\eps}}{d\mu}\leq 1$ from (\ref{defmuiepsphi}).
Taking the limit $\eps\rightarrow 0$, $\vp_\eps\rightarrow \vp$ we get
\be\label{Xirun} \Xi^\eps_\phi(\vp_\eps)\rightarrow  \Xi_\phi(\vp)=  \sum_{i\in I}p_i\int\psi_i d\mu_i+\int_X\vphi\cdot d\vmu\ee
where
$$\vec{\mu}\in\{\lim_{\eps\rightarrow 0} \overrightarrow{\mu_{\eps}^{\vphi, \vp_\eps}}\}\in {\cal P}_{\vp}$$
 and $\lim_{\eps\rightarrow 0}\vm\left(\overrightarrow{\mu^{\vphi, \vp_\eps}}\right)= \vm(\vec{\mu})\in \partial_{\vp}\Xi_\phi$. The limit  (\ref{Xirun}) then takes the form
$$\Xi_\phi(\vp)= \vm(\vec{\mu})\cdot\vp+ \int_X\vphi\cdot d\vmu\ \ . $$
Again, the alternative case holds similarly.
\end{description}
\end{proof}
Let $\widehat{\cal P}$ (res. $\widehat{\underline{\cal P}}$) be the weak $(C^*)$ closure of the union of all ${\cal P}_{\vp}$
(res.  $\underline{\cal P}_{\vp}$)
for $\vp\in\R^I$:
\be\label{defhatP} \widehat{\cal P}:= \overline{\cup_{\vp\in\R^I}{{\cal P}_{\vp}}}^{C^*} \ \ \ , \ \ res.  \  \underline{\widehat{\cal P}}:= \overline{\cup_{\vp\in\R^I}{\underline{\cal P}_{\vp}}}^{C^*} \ \  . \ee
\begin{lemma}\label{writeXiw}
For any $\vm\in \vmS _I$ (res. $\vm\in \uvmS _I$) there exists $\vec{\mu}\in  \widehat{\cal P}$ (res. $\vec{\mu}\in  \underline{\widehat{\cal P}}$) for which $\vm=\vm(\vec{\mu})$. In particular, this $\vmu$ is a maximizer  of $\int_X\vphi\cdot d\vmu$ in ${\cal P}^{\psi,w}_m$ (res. $\underline{\cal P}^{\psi,w}_m$) and satisfies
$$\int_X\vphi\cdot d\vmu= \Xi_\phi^*(\vm) \ \ \ \ \ res. \ \ \int_X\vphi\cdot d\vmu= \uXi^*(\vm) \ . $$
\end{lemma}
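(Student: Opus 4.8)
The goal is to show that every $\vm \in \vmS_I$ is realized by some $\vmu \in \widehat{\cal P}$ with $\vm = \vm(\vmu)$, and that this $\vmu$ is the optimizer of the primal problem attaining the dual value $\Xi_\phi^*(\vm)$. The natural route is to exploit the concave--convex duality already set up: by Lemma~\ref{essd}, $\Xi_\phi^*$ is concave with essential domain $\vmS_I$, and it is the Legendre--Fenchel transform of the convex function $\Xi_\phi$. The standard subgradient correspondence for conjugate pairs says that $\vm \in \vmS_I$ lies in the essential domain of $\Xi_\phi^*$ iff there exists $\vp$ with $\vm \in \partial_{\vp}\Xi_\phi$, equivalently $\vp \in \partial_{\vm}\Xi_\phi^*$. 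So first I would invoke this duality to produce, for the given $\vm$, a point $\vp$ (or a limiting sequence $\vp_\eps$) with $\vm \in \partial_{\vp}\Xi_\phi$.

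Next I would feed this into Lemma~\ref{writeXi}. By part (i) of that lemma, $\vm \in \partial_{\vp}\Xi_\phi$ gives a $\vmu \in {\cal P}_{\vp} \subset \widehat{\cal P}$ with $\vm = \vm(\vmu)$, which proves the first assertion. By part (ii), this same $\vmu$ satisfies $\Xi_\phi(\vp) = \vm(\vmu)\cdot\vp + \int_X \vphi\cdot d\vmu$. Combining with $\Xi_\phi^*(\vm) = \Xi_\phi(\vp) - \vm\cdot\vp$ (which holds precisely when $\vp$ attains the infimum in \eqref{xistar}, i.e.\ when $\vm\in\partial_{\vp}\Xi_\phi$), the $\vm\cdot\vp$ terms cancel and I obtain $\int_X \vphi\cdot d\vmu = \Xi_\phi^*(\vm)$.

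It then remains to identify this dual value as the primal maximum over $\Pipsi$. Since $\vm = \vm(\vmu)$ means $\int_X \psi_i\, d\mu_i = m_i$, and (for ${\cal P}_{\vp}\subset{\cal P}$) we have $\sum_i \mu_i = \mu$, the candidate $\vmu$ is admissible, i.e.\ $\vmu \in {\cal P}^{\psi,w}_m$. For the reverse inequality I would use the elementary bound $\int_X\vphi\cdot d\vnu \le \Xi_\phi(\vp) - \vm\cdot\vp$ for \emph{any} competitor $\vnu \in {\cal P}^{\psi,w}_m$: from \eqref{xi+}, $\phi_i(x) + p_i\psi_i(x) \le \xiphi(x,\vp)$ pointwise, so summing against $\vnu$ and using $\sum_i\nu_i=\mu$ and $\int\psi_i\,d\nu_i=m_i$ gives $\int_X\vphi\cdot d\vnu + \vm\cdot\vp \le \Xi_\phi(\vp)$; minimizing over $\vp$ yields $\int_X\vphi\cdot d\vnu \le \Xi_\phi^*(\vm)$. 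Hence no competitor beats the dual value, which our $\vmu$ already attains, so $\vmu$ is a maximizer.

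The subpartition case is handled identically, replacing $\Xi_\phi$ by $\Xi_\phi^+$, using $\xiphi^+$ which includes the extra $0$-entry, and noting that $\sum_i\mu_i \le \mu$ for $\underline{\cal P}_{\vp}$. The main obstacle is the first step: making the subgradient nonemptiness rigorous. A finite minimizer $\vp$ of $\Xi_\phi(\cdot) - \vm\cdot\vp$ need not exist for $\vm$ on the \emph{boundary} of $\vmS_I$, where coercivity fails; there the minimizing sequence $\vp_\eps$ may escape to infinity and one only recovers $\vm$ as a limit of subgradients. This is exactly why Lemma~\ref{writeXi} was phrased through the regularized $\Xi_\phi^\eps$ and $G$-convergence of subdifferentials, and why $\widehat{\cal P}$ is defined as a \emph{closure} of $\cup_\vp {\cal P}_{\vp}$: I would lean on that machinery to absorb the boundary case rather than assume an interior minimizer.
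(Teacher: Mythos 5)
Your overall duality strategy is the right one, and your treatment of optimality --- the Fenchel equality $\Xi_\phi^*(\vm)=\Xi_\phi(\vp)-\vm\cdot\vp$ at a subgradient point, combined with the elementary competitor bound $\int_X\vphi\cdot d\vec{\nu}\le \Xi_\phi(\vp)-\vm\cdot\vp$ for every $\vec{\nu}\in\vwP$ --- is exactly how the paper itself completes the argument (this is (\ref{stat}) in the proof of Theorem \ref{main new}; note the paper's proof of Lemma \ref{writeXiw} proper only establishes the existence of $\vec\mu\in\widehat{\cal P}$ with $\vm=\vm(\vec\mu)$, deferring optimality to that theorem). The genuine gap is your first step. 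The claim that $\vm$ lies in the essential domain of $\Xi_\phi^*$ iff there exists $\vp$ with $\vm\in\partial_{\vp}\Xi_\phi$ is false in general: the range of $\partial\Xi_\phi$ is only guaranteed to contain the \emph{relative interior} of $\mathrm{dom}\,\Xi_\phi^*=\vmS_I$, while the lemma must cover every $\vm\in\vmS_I$. You acknowledge this for boundary points, but your proposed repair --- ``leaning on'' the $\eps$-regularization/$G$-convergence machinery of Lemma \ref{writeXi} and the closure defining $\widehat{\cal P}$ --- does not close it: Lemma \ref{writeXi} characterizes $\partial_{\vp}\Xi_\phi$ \emph{at a fixed} $\vp$ as limits of gradients of the entropic approximations $\Xi^\eps_\phi$; it says nothing about which $\vm$ arise as limits of subgradients as $\vp$ varies, which is precisely what you need. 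Note also that in the Monge case ($\psi_i\equiv 1$) the set $\vmS_I$ is the simplex $S_I$, whose interior in $\R^N$ is \emph{empty}, so your interior-point step produces nothing for any $\vm$ at all; the problem is not confined to a thin exceptional set.

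The paper fills this gap with a second, different regularization, acting on the dual variable rather than inside the max: following Lemma \ref{correg}, it minimizes $\vp\mapsto \Xi_\phi(\vp)+\frac{\delta}{2}|\vp|^2-\vm\cdot\vp$, which is strictly convex and coercive for \emph{every} $\vm\in\vmS_I$ because $\Xi_\phi(\vp)-\vm\cdot\vp$ is bounded below (Lemma \ref{essd}/Theorem \ref{main1}). The minimizer $\vp_\delta$ exists and satisfies $\vm\in\partial_{\vp_\delta}\Xi_\phi+\delta\vp_\delta$, so Lemma \ref{writeXi}-(i) gives $\vec\mu_\delta\in{\cal P}_{\vp_\delta}$ with $\vm=\vm(\vec\mu_\delta)+\delta\vp_\delta$. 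Multiplying the stationarity relation by $\vp_\delta$ and using convexity yields the key estimate $\delta|\vp_\delta|\le C\sqrt{\delta}$, hence $\vm(\vec\mu_\delta)\to\vm$ as $\delta\to 0$; weak-$*$ compactness of measures then produces a limit $\vec\mu\in\widehat{\cal P}$ (this is where the closure is actually used) with $\vm(\vec\mu)=\vm$. If you prefer to stay close to your own outline, you could instead approximate $\vm$ by points of the relative interior of $\vmS_I$ and invoke the standard fact that the relative interior of $\mathrm{dom}\,\Xi_\phi^*$ is contained in the range of $\partial\Xi_\phi$, then pass to the limit; but some explicit limiting construction of this kind is indispensable, and with it you must also rework your equality step, since at such $\vm$ there is no $\vp$ at which to write the Fenchel equality --- the value $\Xi_\phi^*(\vm)$ is only attained along the approximating sequence, which requires a semicontinuity argument as in Theorem \ref{main new}.
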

\begin{proof}
Following the argument of Lemma~\ref{correg}, set
\be\label{setone}\vp\rightarrow \Xi_\phi(\vp) + \frac{\delta}{2}|\vp|^2-\vm\cdot\vp\ee
for some $\delta>0$. By Lemma~\ref{essd}, $\vm\in \vmS_I $ iff
$$ \Xi_\phi(\vp) + \frac{\delta}{2}|\vp|^2-\vm\cdot\vp\geq\Xi_\phi^*(\vp)+ \frac{\delta}{2}|\vp|^2$$
so  $\vp\rightarrow \Xi_\phi(\vp) + \frac{\delta}{2}|\vp|^2-\vm\cdot\vp$
is a convex coercive function. Hence there exists $\vp_\delta\in\R^I$ which minimize (\ref{setone}),
\be\label{setxitt}
 \Xi_\phi(\vp_\delta) + \frac{\delta}{2}|\vp_\delta|^2-\vm\cdot\vp_\delta= \min_{\vp\in\R^I} \left[
 \Xi_\phi(\vp) + \frac{\delta}{2}|\vp|^2-\vm\cdot\vp\right] \ee
 and
\be\label{mi2=delXi} \vm\in\partial_{\vp_\delta}\Xi_\phi + \delta\vp_\delta \ . \ee
By Lemma~\ref{writeXi}-(i) it follows that there exists $\vec{\mu}_\delta\in {\cal P}_{\vp_\delta}$ for which
$\vm=\vm(\vec{\mu}_\delta) + \delta\vp_\delta $.
We now proceed as in the proof of Theorem~\ref{main1}.
By  the definition of $\partial_p\Xi_\phi$:
$$ \partial_{\vp}\Xi_\phi(\vp) \cdot \vp \geq \Xi_\phi(\vp) - \Xi_\phi(\vec{0})$$
Multiply  (\ref{mi2=delXi}) by $\vp_\delta$ to obtain
 \be\label{ineq7} \partial_{\vp_\delta}\Xi_\phi\cdot\vp_\delta+ \delta \left| \vp_\delta\right|^2-\vm\cdot \vp_\delta=0\geq \Xi_\phi(\vp_\delta) - \Xi_\phi(\vec{0}) + \delta \left| \vp_\delta\right|^2-\vm\cdot \vp_\delta\ee
It follows from (\ref{setxitt},\ref{ineq7}) that
$ \delta \left| \vp^{\eps,\delta}\right|^2$ is bounded uniformly in $\delta>0$, so
$ \delta\left| \vp_\delta\right| \leq C\sqrt{\delta} \ $
for some $C>0$ independent of $\delta$.
Hence (\ref{mi2=delXi}) implies $\partial_{\vp_\delta}\Xi_\phi\rightarrow \vm$ as $\delta\rightarrow 0$. Hence $\vm(\vec{\mu}_\delta)\rightarrow\vm$.
By compactness of $C^*(X)$  we can choose a subsequence $\delta\rightarrow 0$ along which $\vec{\mu}_\delta$ converges to some $\vec{\mu}\in \widehat{\cal P}$ for which $\vm=\vm(\vec{\mu})$.
\end{proof}
\begin{theorem}\label{main new} There exists a maximizer of  $\int_X\vphi\cdot d\vmu$ in  $\vwP$ and any such maximizer is in  $ \widehat{\cal P}$. Likewise,  there exists a maximizer of  $\int_X\vphi\cdot d\vmu$ in  $\uvP$ and any such maximizer is in $ \underline{\widehat{\cal P}}$.
\end{theorem}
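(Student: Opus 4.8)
The plan is to prove the statement in three stages: existence of a maximizer, identification of the optimal value with the Legendre dual $\Xi_\phi^*(\vm)$ (resp.\ $\uXi^*(\vm)$), and a complementary-slackness argument that forces \emph{any} maximizer into $\widehat{\cal P}$ (resp.\ $\underline{\widehat{\cal P}}$).

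First I would settle existence. For fixed $\vm$ the feasible set $\vwP$ (resp.\ $\uvP$) is convex and weakly-$*$ compact: the constraint $\sum_1^N\mu_i=\mu$ (resp.\ $\leq\mu$) confines each $\mu_i$ to a weakly-$*$ compact set, and the constraints $\int_X\psi_i\,d\mu_i=m_i$ are weakly-$*$ closed because $\psi_i\in C(X)$, while $\vmu\mapsto\int_X\vphi\cdot d\vmu$ is weakly-$*$ upper semicontinuous by Assumption~\ref{assd}. This is exactly the argument behind Theorem~\ref{main2} with $K=\{\vm\}$, so a maximizer exists whenever $\vm\in\vmS_I$ (resp.\ $\uvmS_I$), the feasible set being empty otherwise. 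In fact Lemma~\ref{writeXiw} already exhibits one maximizer lying in $\widehat{\cal P}$, so the new content is the \emph{characterization} of all maximizers.

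Next I would pin down the optimal value by weak duality. For any feasible $\vmu$ and any $\vp\in\R^I$ the pointwise bound $\xiphi(x,\vp)\geq\phi_i(x)+p_i\psi_i(x)$, together with $\sum_1^N\mu_i=\mu$ and $\int_X\psi_i\,d\mu_i=m_i$, gives
\be\label{wd} \Xi_\phi(\vp)=\sum_1^N\int_X\xiphi(x,\vp)\,d\mu_i\geq\sum_1^N\int_X\left(\phi_i+p_i\psi_i\right)d\mu_i=\int_X\vphi\cdot d\vmu+\vp\cdot\vm\ . \ee
Taking the infimum over $\vp$ yields $\int_X\vphi\cdot d\vmu\leq\Xi_\phi^*(\vm)$, and Lemma~\ref{writeXiw} shows this bound is attained, so the optimal value equals $\Xi_\phi^*(\vm)$. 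The subpartition case is identical, using $\xiphi^+(x,\vp)\geq\phi_i+p_i\psi_i$ and $\xiphi^+\geq 0$ to absorb the leftover mass $\mu-\sum_1^N\mu_i\geq0$, giving value $\uXi^*(\vm)$.

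The characterization is the crux. Let $\vmu^*$ be any maximizer and let $\vp^*$ be a dual minimizer of $\vp\mapsto\Xi_\phi(\vp)-\vm\cdot\vp$, obtained by the coercivity established in Lemma~\ref{writeXiw} when $\vm$ is interior, and as the $\delta\searrow0$ limit of the regularized minimizers $\vp_\delta$ of Lemma~\ref{writeXiw} in general. Equality in (\ref{wd}) at $\vp=\vp^*$, combined with the pointwise inequalities $\xiphi(\cdot,\vp^*)\geq\phi_i+p_i^*\psi_i$, forces each nonnegative integrand $\xiphi(\cdot,\vp^*)-\phi_i-p_i^*\psi_i$ to vanish $\mu_i^*$-a.e.; that is, $\mu_i^*$ concentrates on the optimal cell $\{x:\phi_i(x)+p_i^*\psi_i(x)=\xiphi(x,\vp^*)\}$, and a short convexity computation then gives $\vm(\vmu^*)\in\partial_{\vp^*}\Xi_\phi$. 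The hardest step is to upgrade this to $\vmu^*\in{\cal P}_{\vp^*}\subseteq\widehat{\cal P}$: Lemma~\ref{writeXi}(i) only guarantees that $\vm(\vmu^*)$ is the barycenter of \emph{some} element of ${\cal P}_{\vp^*}$, so one must show that the complementary-slackness measure $\vmu^*$ is itself a weak-$*$ limit of the regularized (softmax) measures $\overrightarrow{\mu_\eps^{\vphi,\vp_\eps}}$ for a suitable $\vp_\eps\to\vp^*$, and then invoke the $G$-convergence $\partial\Xi_\phi^\eps\to\partial\Xi_\phi$ of Lemma~\ref{writeXi}. This is delicate precisely on the tie sets where two or more indices attain the maximum on a set of positive $\mu$-measure, since there the softmax weights are dictated by the values of $\psi_i$ and need not reproduce an arbitrary admissible splitting; this is where the non-degeneracy hypotheses of the later sections (e.g.\ (\ref{5res})) effectively enter, and I would either appeal to such a condition or exploit the weak-$*$ closure defining $\widehat{\cal P}$ to close the argument. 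The subpartition statement then follows verbatim with $\xiphi^+$, $\uXi$, $\underline{\cal P}_{\vp}$ and $\underline{\widehat{\cal P}}$ replacing their unbarred counterparts, the extra ``$0$'' slot encoding the additional complementary-slackness condition that $\sum_1^N\mu_i^*$ be full where $\max_i(\phi_i+p_i^*\psi_i)>0$ and vanish where it is negative.
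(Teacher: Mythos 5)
Your first two stages are correct and coincide with the paper's own proof: your weak-duality display is exactly the paper's inequality (\ref{stat}), and the attainment of the value $\Xi_\phi^*(\vm)$ by the element of $\widehat{\cal P}$ produced in Lemma \ref{writeXiw} is precisely how the paper exhibits a maximizer (the paper runs this along a sequence $\vp_n$ with $\vmu^n\in{\cal P}_{\vp_n}$, $\vmu^n\rightarrow\vmu$, using Lemma \ref{writeXi}-(i,ii) and the lower semicontinuity of $\Xi_\phi^*$, rather than compactness of $\vwP$, but the content is the same).

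The genuine gap is in your third stage, and it is twofold. First, your argument is anchored at a dual minimizer $\vp^*$ of $\vp\mapsto\Xi_\phi(\vp)-\vm\cdot\vp$, which you claim exists ``in general'' as a $\delta\searrow 0$ limit of the regularized minimizers $\vp_\delta$. This is unjustified: the estimate in Lemma \ref{writeXiw} is only $\delta|\vp_\delta|\leq C\sqrt{\delta}$, i.e. $|\vp_\delta|\leq C/\sqrt{\delta}$, which does not keep $\vp_\delta$ bounded, and for $\vm\in\partial\vmS_I$ the infimum defining $\Xi_\phi^*(\vm)$ need not be attained at all. This is exactly why the paper never fixes a single $\vp^*$ but instead works with sequences $\vp_n$ and defines $\widehat{\cal P}$ as a weak-$*$ closure in (\ref{defhatP}). (Dual attainment for \emph{interior} $\vm$ is indeed available, but that is the machinery the paper defers to Theorem \ref{main3}, under the additional Assumption \ref{mainass2}.) Second, even granting $\vp^*$, you concede you cannot pass from ``each $\mu_i^*$ concentrates on its cell and $\vm(\vmu^*)\in\partial_{\vp^*}\Xi_\phi$'' to ``$\vmu^*\in\widehat{\cal P}$'', and neither of your proposed escapes works here: condition (\ref{5res}) (Assumption \ref{mainass3}) is introduced only in Section \ref{optimstrong1}, after this theorem, so by the paper's convention (Section \ref{not}-(i)) it is not in force, and invoking it would turn an unconditional statement into a conditional one; ``exploiting the weak-$*$ closure'' is not an argument, since $\widehat{\cal P}$ is the closure of the specific softmax-limit sets ${\cal P}_\vp$ and nothing you established places $\vmu^*$ in that closure. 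For what it is worth, the paper's own proof does not supply an argument for the ``any such maximizer'' clause either --- it only produces one maximizer lying in $\widehat{\cal P}$ --- so your stages one and two already reproduce everything the paper actually proves; but your stage three, as written, does not close the remaining claim.
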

\begin{proof}
First, any $\mu\in\vwP$ satisfies
$$\int_X\vphi\cdot d\vmu \leq \Xi_\phi(\vp)-\vp\cdot\vm $$
for any $\vp\in\R^I$. Indeed,  since
$$ \phi_i(x)\leq \xi(x,p) - p_i\psi_i(x) \ \ i\in I$$
 we get
\begin{multline}\label{stat}\int_X\vphi\cdot d\vmu \leq \sum_1^N \int_X \xi(x, \vp) d\mu_i - \sum_1^N p_i\int_X\psi_i d\mu_i \\ \leq \int_X \xi(x, \vp)( \sum_1^N d\mu_i)- \vp\cdot \vm\leq \Xi_\phi(\vp)-\vp\cdot\vm\leq \Xi_\phi^*(\vm) \ .  \end{multline}
\par
Let now $\vm\in \vmS_I$ By Lemma \ref{writeXiw} there exists $\vmu\in  \widehat{\cal P}$ such that $\vm(\vmu)=\vm$.
 By definition (\ref{defhatP}) there exists  a sequence $\vp_n\in\R^I$ such that $\vec{\mu}=\lim_{n\rightarrow\infty}\vec{\mu}^n$ where $\vec{\mu}^n\in {\cal P}_{\vp_n}$.

In particular, $\vm_n:= \vm(\vec{\mu}^n)\rightarrow \vm$. By lemma~\ref{writeXi}-(ii) we obtain that
 $\Xi_\phi(\vp_n)= \vm(\vec{\mu}^n)\cdot \vp_n +  \int_X \vphi\cdot d\vmu^n$. From Lemma~\ref{writeXi}-(i)
 $$\Xi_\phi^*(\vm_n)= \Xi_\phi(\vp_n)- \vm_n\cdot \vp_n =  \int_X \vphi d\vmu^n \  $$

  Taking the limit $n\rightarrow\infty$ and the lower-semi-continuity of $\Xi_\phi^*$  we get
  $$\Xi_\phi^*(\vm)\leq  \int_X \vphi d\vmu \   \ . $$
  This, with (\ref{stat}), implies that $\vec{\mu}$ is the maximizer.
\end{proof}

\section{Strong  (sub)partitions}\label{optimstrong}
\subsection{Structure of the strong partition sets}\label{optimstrong1}
\begin{assumption}\label{mainass3} \
For any $i\in I$ and $x\in X$ $\psi_i>0$ and is positive.  In addition, for any $i\not= j\in I$
$$\mu\left[x\in X; \alpha\psi_i(x)+\beta\psi_j(x)=0\right]=0$$
 for   any $\alpha, \beta\in\R$, $\alpha^2+\beta^2> 0$,
\end{assumption}
\begin{lemma}\label{51}
Under Assumption \ref{mainass3},  $\Xi_0$ is differentiable at any point $\vp:= (p_1, \ldots p_N)$ for which $\Pi_1^N p_i\not= 0$. In particular
\be\label{diffm} \frac{\partial\Xi_0}{\partial p_i}(\vp)= \int_{X_i(\vp)} \psi_i d\mu \  \ee
is continuous,  where $X_i(\vp):= \{ x\in X \ ; \ p_i\psi_i(x)= \xi_0(x,\vp)\}$ is a strong partition.
\par
If, in addition, $\vp>\vo$ then $\Xi_0^+$ is differentiable at $\vp$ as well and
\be\label{udiffm} \frac{\partial\Xi^+_0}{\partial p_i}(\vp)= \int_{X_i(\vp)} \psi_i d\mu \ .  \ee
 \end{lemma}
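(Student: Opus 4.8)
The plan is to exploit the convexity of $\Xi_0$ (already established) together with the non-degeneracy built into Assumption~\ref{mainass3}, reducing the differentiability of $\Xi_0$ at an admissible $\vp$ to the observation that, for such $\vp$, almost every $x$ is an interior point of exactly one cell $X_i(\vp)$.

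First I would fix $\vp$ with $\prod_1^N p_i\neq 0$ and show that the ``tie set''
$$ T(\vp):=\{x\in X:\ \exists\, i\neq j,\ p_i\psi_i(x)=p_j\psi_j(x)=\xi_0(x,\vp)\} $$
is $\mu$-null. Indeed $T(\vp)\subseteq \bigcup_{i\neq j}\{x:\ p_i\psi_i(x)-p_j\psi_j(x)=0\}$, and each set in this union is null by Assumption~\ref{mainass3} applied with $\alpha=p_i,\ \beta=-p_j$; the hypothesis $\alpha^2+\beta^2>0$ holds precisely because every $p_i\neq 0$. Hence for $\mu$-a.e. $x$ the maximum defining $\xi_0(x,\vp)$ is attained at a \emph{unique} index $k=k(x)$, and the cells $X_i(\vp)=\{x:\ p_i\psi_i(x)=\xi_0(x,\vp)\}$ are pairwise disjoint up to a null set and cover $X$, i.e. they form a strong partition.

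Next I would differentiate under the integral sign. For each $x\notin T(\vp)$ the map $\vq\mapsto\xi_0(x,\vq)$ is differentiable at $\vp$ with $\partial\xi_0/\partial p_i=\psi_i(x){\bf 1}_{X_i(\vp)}(x)$: if $k(x)=i$ the winning branch is $(p_i+t)\psi_i(x)$ near $t=0$, while if $k(x)\neq i$ the winner is unaffected by small perturbations of $p_i$. Since the max is $1$-Lipschitz in each slot, the difference quotients $t^{-1}[\xi_0(x,\vp+te_i)-\xi_0(x,\vp)]$ are bounded by $\|\psi_i\|_\infty<\infty$ (finite as $\psi_i\in C(X)$, $X$ compact), so dominated convergence gives
$$ \frac{\partial\Xi_0}{\partial p_i}(\vp)=\int_X\psi_i(x){\bf 1}_{X_i(\vp)}(x)\,d\mu=\int_{X_i(\vp)}\psi_i\,d\mu . $$
Thus all partials exist at every point of the open set $\{\prod_1^N p_i\neq 0\}$. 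Because $\Xi_0$ is convex, existence of all partial derivatives at a point forces full differentiability there (a standard convex-analysis fact, Rockafellar, \emph{Convex Analysis}, Thm.~25.2), and the gradient of a convex function is automatically continuous on any open set on which it exists (ibid., Thm.~25.5); together these yield (\ref{diffm}) and its continuity. For the $\Xi_0^+$ claim with $\vp>\vo$: since $\psi_i>0$ on the compact $X$ we have $\min_X\psi_i>0$, so $q_i\psi_i(x)>0$ and hence $\xi_0(x,\vq)=\max_i q_i\psi_i(x)>0$ for every $x$ and every $\vq$ in a small neighborhood of $\vp$ contained in $\{\vq>\vo\}$. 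On that neighborhood $\xi_0^+(\cdot,\vq)=\xi_0(\cdot,\vq)$, so $\Xi_0^+=\Xi_0$ there, and (\ref{udiffm}) follows from the differentiability of $\Xi_0$ just proved.

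I expect the only real content to be the null-set step, which is exactly where Assumption~\ref{mainass3} enters; everything downstream is routine dominated convergence plus the two standard convex-analysis theorems. The one point to keep honest is that the passage from existence of partials to genuine (total) differentiability truly relies on convexity — it would fail for a general integrand — and that the uniform bound for dominated convergence comes from the $1$-Lipschitz property of $\max$ in sup-norm rather than from any smoothness of the $\psi_i$.
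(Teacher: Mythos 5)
Your proof is correct, and it tracks the paper's proof through the first two steps: Assumption~\ref{mainass3} is applied exactly as you apply it (with $\alpha=p_i$, $\beta=-p_j$, both nonzero since $\prod_1^N p_i\neq 0$) to make the tie sets $\mu$-null, so the cells $X_i(\vp)$ form a strong partition, and the formula (\ref{diffm}) comes from differentiating $\xi_0(x,\cdot)$ pointwise a.e.\ and integrating --- your dominated-convergence argument with the $1$-Lipschitz bound is precisely the justification the paper leaves implicit in ``a direct integration of the above over $X$ yields (\ref{diffm})''. Where you genuinely diverge is the last step, from existence of partial derivatives to total differentiability of $\Xi_0$ and continuity of its gradient. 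The paper asserts that the cells $X_i(\vp)$ vary continuously in the Hausdorff metric, deduces from this that the right side of (\ref{diffm}) is continuous in $\vp$, and concludes differentiability by the classical ``continuous partials imply $C^1$'' criterion. You instead exploit convexity of $\Xi_0$: existence of all partials at a point forces differentiability there (Rockafellar, Thm.~25.2), and the gradient of a convex function is automatically continuous on any open set where it exists (Thm.~25.5). Your route buys rigor: the paper's Hausdorff-continuity claim is stated without proof, and even granted it does not by itself yield continuity of the integrals unless one again invokes that the cell boundaries are $\mu$-null; your convex-analysis step needs no such set-continuity input and delivers the continuity of (\ref{diffm}) as an output. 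The one point to tidy is that Rockafellar's Thm.~25.5 is stated for open \emph{convex} sets, while $\{\vp:\ \prod_1^N p_i\neq 0\}$ is not convex; this is harmless, since differentiability and gradient continuity are local and you may apply the theorem on each open orthant (or on small balls). Your treatment of $\Xi_0^+$ coincides with the paper's: for $\vp>\vo$, positivity of the $\psi_i$ gives $\xi_0^+=\xi_0$ near $\vp$, so (\ref{udiffm}) follows at once from (\ref{diffm}).
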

\begin{proof} (\ref{udiffm}) follows from (\ref{diffm}) by definition (compare (\ref{xi0}) to (\ref{11}), using the assumption $\psi_i>0$).
Assumption \ref{mainass3}
 yields the existence of a strong partition $\overrightarrow{X}(\vp):= \left( X_1(\vp), \ldots X_N(\vp)\right)$ in $ {\cal P}_{\vm}^{\vpsi}$ associated with each $\vp$:
 \be\label{Xiphi} X_i(\vp):= \{ x\in X \ ; \ p_i\psi_i(x)= \xi_0(x,\vp)\} \ . \ee
where $\xi_0$ as defined in (\ref{xi0}). In particular $\mu(X_i(\vp)\cap X_j(\vp))=0$ for $i\not= j$.
 Note that $\xi_0(x,\vp)$ is differentiable a.e   and
$$\frac{\partial\xi_0(x,\vp)}{\partial p_i} =\left\{
  \begin{array}{cc}
    \psi_i(x) & \text{if} \ x\in X_i(\vp) \ \text{a.e} \\
    0 &  \text{if} \ x\not\in X_i(\vp) \ \text{a.e}  \\
  \end{array}
\right.
$$
A direct integration of the above over $X$ yields (\ref{diffm}).
Under Assumption \ref{mainass3}, the sets $X_i(\vp)$ are  continuous with $\vp$ in the Hausdorff metric at $\vp\not= 0$,  hence
it yields that  the right side of (\ref{diffm}) is, indeed, continuous, hence $\Xi_0$ is differentiable at any $\vp$ satisfying the assumption of the Lemma. The same proof holds for $\Xi_0^+$ where this time
$$\uX_i(\vp):= \{ x\in X \ ; \ p_i\psi_i(x)= \xi^+_0(x,\vp)\} \ $$
 is a partition.
 \end{proof}

\begin{proposition}\label{uniquestrong}
Assume $\vm>\vo$.
Under assumption \ref{mainass3},  if  $\vm\in\partial \vmS_I$  then
\begin{description}
\item{i)} $\vm$ is an exposed point in $\vmS_I$. That is, $\vm$ is not an interior point of any segment contained in $\vmS_I$.
\item{ii)} There exists a unique partition in $\vwP$. Moreover, this partition is a strong one.
 \end{description}
 \end{proposition}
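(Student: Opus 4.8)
The plan is to read everything off the dual picture of Theorem~\ref{main1} and then invoke the differentiability of Lemma~\ref{51}. Since $\vm\in\partial\vmS_I$, Corollary~\ref{main2cor} says that $\vp=\vo$ is \emph{not} a strict minimizer of the convex, positively $1$-homogeneous function $G(\vp):=\Xi_0(\vp)-\vm\cdot\vp$. As $G\geq 0$ on $\R^I$ (Theorem~\ref{main1}) and $G(\vo)=0$, there is $\vp_0\neq\vo$ with $G(\vp_0)=0$, and being a zero of a nonnegative function, $\vp_0$ is a \emph{global} minimizer of $G$. First I would record a ``support on the argmax'' property: for any weak partition $\vmu\in\vwP$ realizing $\vm$ (one exists since $\vm\in\vmS_I=\vmSw_I$ by Theorem~\ref{weak=strong}), integrating the pointwise bound $(p_0)_i\psi_i(x)\le\xi_0(x,\vp_0)$ against $\mu_i$ and using $\sum_i\mu_i=\mu$ gives $\sum_i\int (p_0)_i\psi_i\,d\mu_i\le\Xi_0(\vp_0)$, whose left side equals $\vp_0\cdot\vm=\Xi_0(\vp_0)$. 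Every term is therefore saturated, so each $\mu_i$ is carried by $X_i(\vp_0)=\{x:(p_0)_i\psi_i(x)=\xi_0(x,\vp_0)\}$; since $m_i=\int\psi_i\,d\mu_i>0$ and $\mu_i\le\mu$, this forces $\mu(X_i(\vp_0))>0$ for every $i$.

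The heart of the matter, and the step I expect to be the main obstacle, is to upgrade this to $\prod_i (p_0)_i\neq 0$ so that Lemma~\ref{51} is applicable; here the hypothesis $\vm>\vo$ is essential. Suppose $(p_0)_k=0$ for some $k$. If some coordinate $(p_0)_l>0$, then (using $\psi_l>0$) one has $\xi_0(\cdot,\vp_0)\geq (p_0)_l\psi_l>0$ everywhere, so $X_k(\vp_0)=\{\xi_0=0\}=\emptyset$, contradicting $\mu(X_k(\vp_0))>0$. If instead all coordinates are $\le 0$, pick $j$ with $(p_0)_j<0$ (one exists as $\vp_0\neq\vo$); index $k$ already yields the value $0$, so $\xi_0\geq 0$ everywhere, whereas $(p_0)_j\psi_j<0$, forcing $X_j(\vp_0)=\emptyset$ and again contradicting $\mu(X_j(\vp_0))>0$. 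Hence no coordinate of $\vp_0$ vanishes (indeed all share one sign), which is exactly the non-degeneracy Lemma~\ref{51} requires.

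With $\prod_i(p_0)_i\neq 0$ the remainder is routine. Lemma~\ref{51} gives differentiability of $\Xi_0$ at $\vp_0$ with $\partial\Xi_0/\partial p_i=\int_{X_i(\vp_0)}\psi_i\,d\mu$, and since $\vp_0$ globally minimizes $G$ we get $\nabla\Xi_0(\vp_0)=\vm$, so the strong partition $\oX(\vp_0)$ realizes $\vm$. For (i), differentiability makes $\partial\Xi_0(\vp_0)=\{\vm\}$ a singleton: if $\vm=t\vm_1+(1-t)\vm_2$ with $\vm_1,\vm_2\in\vmS_I$ and $t\in(0,1)$, then $\vp_0\cdot\vm=\Xi_0(\vp_0)$ together with $\vp_0\cdot\vm_1\le\Xi_0(\vp_0)$ and $\vp_0\cdot\vm_2\le\Xi_0(\vp_0)$ (Theorem~\ref{main1}) forces $\vp_0\cdot\vm_1=\vp_0\cdot\vm_2=\Xi_0(\vp_0)$, whence $\vm_1,\vm_2\in\partial\Xi_0(\vp_0)=\{\vm\}$ and $\vm_1=\vm_2=\vm$; so $\vm$ is exposed. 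For (ii), every coordinate of $\vp_0$ being nonzero lets Assumption~\ref{mainass3} (with $\alpha=(p_0)_i$, $\beta=-(p_0)_j$) give $\mu\{(p_0)_i\psi_i=(p_0)_j\psi_j\}=0$ for $i\neq j$, so the $X_i(\vp_0)$ are essentially disjoint. Combining the ``support on the argmax'' property with this disjointness and $\sum_i\mu_i=\mu$ pins down $\mu_i=\mu\lfloor X_i(\vp_0)$ for every weak partition realizing $\vm$; hence the weak partition is unique and equals the strong partition $\oX(\vp_0)$.
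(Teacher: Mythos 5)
Your proof is correct, and its skeleton matches the paper's: extract a nonzero global minimizer $\vp_0$ of $\Xi_0(\vp)-\vm\cdot\vp$ via Corollary~\ref{main2cor} and Theorem~\ref{main1}, show no coordinate of $\vp_0$ vanishes so that Lemma~\ref{51} applies, deduce (i) from $\partial_{\vp_0}\Xi_0$ being the singleton $\{\vm\}$, and (ii) from the saturation identity $\sum_i\int_X\bigl(\xi_0(x,\vp_0)-(p_0)_i\psi_i(x)\bigr)d\mu_i=0$. Where you genuinely depart from the paper is the non-degeneracy step $\prod_i(p_0)_i\neq 0$, which both you and the paper identify as the crux. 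The paper argues on the dual side: if, say, $p^0_1>0$ and $p^0_j\leq 0$, then $(p^0_j+\eps)\psi_j<p^0_1\psi_1$ on $X$ for small $\eps>0$, so $\Xi_0$ is invariant under $\vp_0\mapsto\vp_0+\eps\vec{e}_j$ while the linear term drops by $\eps m_j>0$, contradicting Theorem~\ref{main1} (and symmetrically when $p^0_1<0$, $p^0_j\geq 0$). You argue on the primal side: you first prove the saturation property (each $\mu_i$ of a weak partition realizing $\vm$ is carried by the argmax set $X_i(\vp_0)$), and then observe that a vanishing or wrongly-signed coordinate makes its argmax set empty, forcing $\mu_i=0$ and $m_i=0$, against $\vm>\vo$. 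The two mechanisms exploit the same underlying fact --- a coordinate that never attains the maximum cannot carry positive mass --- but yours avoids the $\eps$-perturbation and the uniform bound $\psi_1>c$, and lets you reuse the saturation identity verbatim in part (ii), whereas the paper keeps the sign analysis purely dual and only brings in the primal measures for (ii). A further small improvement in your write-up: in (i) you treat an arbitrary segment in $\vmS_I$ having $\vm$ as an interior point (general $t\in(0,1)$), while the paper's proof only considers intervals centered at $\vm$ and contained in $\partial\vmS_I$; your version matches the stated claim directly, without the (standard, but unstated) convexity argument that a segment through a boundary point must lie in the boundary.
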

 \begin{lemma}\label{u=}
 Under assumption \ref{mainass3},  if  $\vm\in\partial \uvmS_I$, $\vm > \vo$,   then  $\vm\in\partial \vmS_I$.  Moreover, $\underline{\cal P}^{\psi,w}_m= {\cal P}^\psi_m$.
 \end{lemma}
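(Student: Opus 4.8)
The plan is to run a complementary-slackness argument off the dual characterization of Theorem~\ref{main1}, the engine being the elementary inequality that bounds every weak subpartition by $\Xi_0^+$ evaluated at a suitable dual point. First I would translate the hypothesis into dual language. Since $\uvmS_I$ is compact (Lemma~\ref{noempty}) it contains its boundary, so $\vm\in\uvmS_I$ and by Theorem~\ref{main1} the convex function $\vp\mapsto \Xi_0^+(\vp)-\vm\cdot\vp$ is nonnegative and vanishes at $\vp=\vo$. By Corollary~\ref{main2cor}, $\vm$ failing to be interior means $\vo$ is not a strict minimizer, so there is $\vp_0\neq\vo$ with $\Xi_0^+(\vp_0)-\vm\cdot\vp_0=0$. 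I would also record a witness subpartition: by Theorem~\ref{weak=strong}, $\uvmS_I=\uvmSw_I$, hence $\underline{\cal P}^{\psi,w}_m\neq\emptyset$.

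The core computation is the following chain, valid for any $\vmu\in\underline{\cal P}^{\psi,w}_m$, using $\int\psi_i\,d\mu_i=m_i$, then the pointwise bound $(p_0)_i\psi_i(x)\le \xi_0^+(x,\vp_0)$, and finally $\sum_i\mu_i\le\mu$ together with $\xi_0^+\ge0$:
\[
\vm\cdot\vp_0=\sum_i\int_X (p_0)_i\psi_i\,d\mu_i\le \int_X\xi_0^+(x,\vp_0)\,d\Big(\sum_i\mu_i\Big)\le \int_X\xi_0^+(x,\vp_0)\,d\mu=\Xi_0^+(\vp_0).
\]
Since the two ends coincide, both inequalities are equalities. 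The first forces $(p_0)_i\psi_i=\xi_0^+(\cdot,\vp_0)$ $\mu_i$-a.e., i.e.\ each $\mu_i$ is carried by $X_i(\vp_0):=\{x:(p_0)_i\psi_i(x)=\xi_0^+(x,\vp_0)\}$; the second says $\int_X\xi_0^+(\cdot,\vp_0)\,d(\mu-\sum_i\mu_i)=0$ with a nonnegative integrand and a nonnegative measure.

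The main obstacle, and the step where the hypotheses $\vm>\vo$ and the positivity/nondegeneracy of Assumption~\ref{mainass3} are genuinely used, is to prove $\vp_0>\vo$. I would apply the first equality above to the witness $\vmu^*$: it gives $(p_0)_k\psi_k=\xi_0^+\ge0$ $\mu_k^*$-a.e.; since $\psi_k>0$, a negative $(p_0)_k$ would force $\mu_k^*=0$ and hence $m_k=0$, contradicting $\vm>\vo$, so $\vp_0\ge\vo$. If some $(p_0)_k=0$, then $\mu_k^*$ lives on $\{\xi_0^+=0\}$; but $\vp_0\ge\vo$, $\vp_0\neq\vo$ and $\psi_j>0$ give $\xi_0^+(\cdot,\vp_0)\ge (p_0)_l\psi_l>0$ everywhere for any $l$ with $(p_0)_l>0$, so that set is empty and again $m_k=0$, a contradiction. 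Hence $\vp_0>\vo$.

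Once $\vp_0>\vo$ the argument closes cleanly. Then $\xi_0^+(\cdot,\vp_0)=\max_i(p_0)_i\psi_i$ is bounded below by a positive constant on the compact $X$, so the second equality forces $\mu-\sum_i\mu_i=0$: every $\vmu\in\underline{\cal P}^{\psi,w}_m$ has no leftover and is a full weak partition. By Assumption~\ref{mainass3} there are no ties, so (exactly as in Lemma~\ref{51}) $\{X_i(\vp_0)\}$ is a genuine partition of $X$; combining this with $\sum_i\mu_i=\mu$ and the fact that each $\mu_i$ is carried by the disjoint $X_i(\vp_0)$ yields $\mu_i=\mu\lfloor X_i(\vp_0)$ for every $i$. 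Thus each $\vmu$ equals the single strong partition $\oX(\vp_0)$, giving $\underline{\cal P}^{\psi,w}_m={\cal P}^\psi_m$. In particular $\vm\in\vmS_I$, and since $\vmS_I\subseteq\uvmS_I$ while $\vm\in\partial\uvmS_I$, every neighborhood of $\vm$ meets the complement of $\uvmS_I$, hence of $\vmS_I$, so $\vm\in\partial\vmS_I$. (The uniqueness built into this last step could alternatively be quoted from Proposition~\ref{uniquestrong}(ii).)
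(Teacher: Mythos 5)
Your proof is correct, and its engine is the same as the paper's: the dual certificate $\vp^0\neq\vo$ with $\Xi_0^+(\vp^0)-\vm\cdot\vp^0=0$ from Corollary~\ref{main2cor}, followed by the complementary-slackness chain whose equality, together with strict positivity of $\xi_0^+(\cdot,\vp^0)$ when $\vp^0>\vo$, forces $\sum_i\mu_i=\mu$. The one place where you genuinely diverge is the proof that $\vp^0>\vo$: the paper imports the dual-side perturbation argument from Proposition~\ref{uniquestrong} (if some coordinate $p_j^0\leq 0$ while another is positive, shifting $p_j^0$ by a small $\eps$ leaves $\Xi_0^+$ unchanged but makes $\Xi_0^+ -\vm\cdot\vp$ negative, contradicting Theorem~\ref{main1}, and $\vp^0\leq\vo$ is excluded since it would force $\vm\cdot\vp^0<0=\Xi_0^+(\vp^0)$), whereas you argue on the primal side: a coordinate $p^0_k\leq 0$ makes the carrier set $X_k(\vp^0)$ empty, so the witness subpartition has $\mu^*_k=0$ and hence $m_k=0$, contradicting $\vm>\vo$. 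Both are sound; your variant is self-contained (it does not require re-running the earlier proposition's proof) and uses the witness measure that you need anyway. You also spell out what the paper leaves implicit: the paper's own proof stops once every weak subpartition is shown to be a weak partition, and tacitly delegates the identification $\underline{\cal P}^{\psi,w}_{\vm}={\cal P}^{\psi}_{\vm}$ (via $\mu_i=\mu\lfloor X_i(\vp^0)$) and the conclusion $\vm\in\partial\vmS_I$ to Proposition~\ref{uniquestrong}(ii) and the inclusion $\vmS_I\subseteq\uvmS_I$; your closing paragraph proves both explicitly, which is a useful completion rather than a departure.
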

 \begin{cor}\label{v=}    
If $\vm$ is supported on $J\subset I$ so $\vm_J>\vo$ (see section \ref{not}-(iii)  and either $\vm\in\partial  \vmS_J$ or $\vm\in\partial  \uvmS_J$ then the conclusion of Proposition \ref{uniquestrong} hold.
\end{cor}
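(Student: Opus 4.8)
The plan is to reduce to Proposition~\ref{uniquestrong} and Lemma~\ref{u=} applied not to the full index set $I$ but to the support $J$ of $\vm$, using that Assumption~\ref{mainass3}, a pointwise condition on the prices, is inherited verbatim by the subfamily $(\psi_i)_{i\in J}$. The mechanism is strict positivity of the $\psi_i$: if $\vm$ is supported on $J$ and $\vmu=(\mu_1,\ldots,\mu_N)$ is any weak subpartition with $\int_X\psi_i\,d\mu_i=m_i$, then for $i\notin J$ we have $m_i=0$, and since $\psi_i$ is continuous and strictly positive on the compact space $X$ it is bounded below by some $c_i>0$, so $0=\int_X\psi_i\,d\mu_i\ge c_i\,\mu_i(X)\ge 0$ forces $\mu_i=0$. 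Hence every weak (sub)partition realizing such an $\vm$ is trivial off $J$; discarding the null components gives a bijection with the weak (sub)partitions for the index set $J$ with data $\vm_J$, and at the level of the attainable sets this reads $\vmS_J=\vmS_I\cap\R^J$ and $\uvmS_J=\uvmS_I\cap\R^J$.

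With the reduction in place I would argue as follows. If the hypothesis is $\vm\in\partial\uvmS_J$, then Lemma~\ref{u=} applied to $J$ (legitimate because $\vm_J>\vo$) gives $\vm\in\partial\vmS_J$ and identifies the weak subpartition set over $J$ with the strong partition set over $J$; if instead $\vm\in\partial\vmS_J$ we are already in this situation. In both cases Proposition~\ref{uniquestrong}, applied with $I$ replaced by $J$ and with the role of $\vm>\vo$ played by $\vm_J>\vo$, yields that $\vm$ is an exposed point of $\vmS_J$ and that there is a unique weak partition over $J$, which is moreover strong.

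It remains to carry the two conclusions back from $J$ to $I$. Uniqueness is immediate from the reduction: every weak (sub)partition of the $I$-problem has null components off $J$ and hence corresponds to a $J$-(sub)partition, so uniqueness and strongness pass over directly (in the subpartition case the relevant set is, by Lemma~\ref{u=}, exactly the one-point strong partition set over $J$). For exposedness, observe first that $\psi_i>0$ forces $\vmS_I,\uvmS_I\subset\R^I_+$; thus if $\vm$ were interior to a segment $[\vm',\vm'']$ contained in $\vmS_I$ (resp.\ $\uvmS_I$), the identities $0=m_i=\lambda m_i'+(1-\lambda)m_i''$ for $i\notin J$, with $m_i',m_i''\ge0$ and $\lambda\in(0,1)$, force $m_i'=m_i''=0$, so the segment lies in $\R^J$, hence in $\vmS_J$ (resp.\ $\uvmS_J$). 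In the partition case this already contradicts the exposedness of $\vm$ in $\vmS_J$. In the subpartition case one more step is needed: choosing a supporting hyperplane of $\uvmS_J$ at the boundary point $\vm$ places both endpoints of the segment on $\partial\uvmS_J$, and after shrinking the segment so that $\vm'_J,\vm''_J>\vo$, Lemma~\ref{u=} applied to each endpoint puts $\vm',\vm''$ in $\vmS_J$; convexity then gives the whole segment in $\vmS_J$, again contradicting exposedness of $\vm$ there. The only real content is this positivity-driven reduction together with the exposedness transfer; once $\vmS_J=\vmS_I\cap\R^J$ is established, Proposition~\ref{uniquestrong} and Lemma~\ref{u=} supply everything else, and the delicate point to get right is precisely the subpartition exposedness, where one must confine the segment to the positive orthant and invoke Lemma~\ref{u=} at the endpoints rather than only at $\vm$.
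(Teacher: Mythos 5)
Your proof is correct and follows exactly the route the paper intends: the corollary is stated there without proof, as an immediate consequence of applying Proposition \ref{uniquestrong} and Lemma \ref{u=} with the index set $I$ replaced by the support $J$, which is precisely your reduction. Your write-up merely supplies the details the paper leaves implicit --- strict positivity of the $\psi_i$ forcing $\mu_i=0$ for $i\notin J$ (hence $\vmS_J=\vmS_I\cap\R^J$ and the bijection of (sub)partition sets), and the supporting-hyperplane argument transferring exposedness from $\vmS_J$ back to the full sets --- and all of these steps are sound.
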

 \begin{proof} {\it of Proposition \ref{uniquestrong}}: \
 Using Corollary \ref {main2cor} we obtain that if $\vm\in\partial \vmS_I$ there exists $\vp^0\not= \vo$  for which $\Xi_0(\vp^0)-\vm\cdot\vp^0=0\leq \Xi_0(\vp)-\vm\cdot\vp $
 for any $\vp\in \R^I$.  We claim that if $\vm>\vo$ then $\vp^0$  can be chosen to satisfy  the assumption of Lemma \ref{51}.  In particular, we prove that either $\vp^0>\vo$ or $\vp^0<\vo$.
 \par
 Assume that, say, $p^0_1>0$. Since $\psi_1>c$  on $X$ for some $c>0$ by assumption, then $\xi_0({\vp}^0,x)\geq p^0_1\psi_1(x)>p_1c$  on $X$. If $p^0_j\leq 0$ for some $j\not=1$, let $\eps>0$ for which $(p_j^0+\eps)\psi_j< p^0_1\psi_1$ on $X$.
 Then by definition  $\xi_0(\vp^0,x)=\xi^0(\vp^0+\eps\vec{e_j}, x)$ on $X$.
 Here $\vec{e}_j$ is the unit  coordinate vector pointing in the positive $j$ direction. hence $\Xi_0(\vp^0)=\Xi_0(\vp^0+\eps\vec{e}_j)$
  so
  $$\Xi_0(\vp^0+\eps \vec{e}_j) -\vm\cdot(\vp^0+\eps\vec{e}_j)= \Xi_0(\vp^0) -\vm\cdot\vp_0 - \eps m_j  =-\eps m_j \ . $$
  Since $m_j>0$ by assumption it follows that we get a contradiction to   $\vp^0\in\vmS_I$ by Theorem \ref{main1}.
    \par
  Alternatively, if $p^0_1<0$ and $p_j^0\geq 0$ for some $j\not=1$, then
   $\Xi_0(\vp^0+\eps\vec{e}_1)=\Xi_0(\vp^0)$  for any $0<\eps <-p^0_1$ so

   $$\Xi_0(\vp^0+\eps \vec{e}_1) -\vm\cdot(\vp^0+\eps\vec{e}_1)= \Xi_0(\vp^0) -\vm\cdot\vp_0 - \eps m_1 <0 $$
   as well. Hence either $\vp^0>\vo$ or $\vp^0<\vo$ and, in particular, the condition of Lemma \ref{51} is satisfied.
   \par\noindent {\it Proof of (i)}: \\
  Suppose now that $\partial\vmS_I$ contains an interval centered  at $\vm>\vo$. In particular there exists $\vm_1, \vm_2\in\partial\vmS_I$, $\vm_1\not=\vm_2$ such that $\vm=(\vm_1+\vm_2)/2$.
 Let $\vp^0$ corresponding to $\vm$ as above:
  \be\label{p3m1} \Xi(\vp^0)-\frac{\vm_1+\vm_2}{2}\cdot\vp^0=0 \ . \ee
 Since $\vm_1, \vm_2\in \vmS_I$ we get by  Theorem \ref{main1}
  \be\label{p3m12} \Xi(\vp^0)-\vm_1\cdot\vp^0\geq 0 \ \ \ ; \ \ \ \Xi(\vp^0)-\vm_2\cdot\vp^0\geq 0 \ .  \ee
Averaging  these two inequalities we get
$$ \Xi(\vp^0)-\frac{\vm_1+\vm_2}{2}\cdot\vp^0\geq 0 \ $$
and, from (\ref{p3m1}) we get that the two inequalities in (\ref{p3m12}) are, in fact, equalities:
$$ \Xi(\vp^0)-\vm_1\cdot\vp^0= 0 \ \ \ ; \ \ \ \Xi(\vp^0)-\vm_2\cdot\vp^0= 0 \   $$
which implies that $\vm_1, \vm_2\in\partial_{\vp^0}\Xi_0$. In particular $\Xi_0$ is not differentiable at $\vp^0$, which is a contradiction to Lemma~\ref{51}. Hence $\vm_1=\vm_2$.
\par\noindent{\it Proof of (ii)}:

   From  Lemma \ref{51} we also get that
   $$ X^0_i(\vp^0):= \{ x\in X \ ;  p^0_i\psi_i(x)= \xi_0(x,\vp^0)\}  $$
   is a strong partition. If $\vm\in\partial\uvmS$ and  $\vm > \vo$ then, necessarily, $\vp^0>\vo$.

   We now show that  any weak partition in $\vwP$  is the strong partition given by $\oX^0$.  Indeed, if $\vec{\mu}\in \vwP$ , then
 $$ \Xi_0(\vp^0)= \int_X\xi_0(x,  \vp^0)d\mu(x)= \sum_1^N \int_X\xi_0(x, \vp^0)d\mu_i (x) \geq \sum_1^N p^0_i\int_X\psi_id\mu_i = \vp^0\cdot\vm \ . $$
 Since
 $\Xi_0(\vp^0)=\vp^0\cdot\vm$, it follows that
 $$\sum_1^N  \int_X \left( \xi_0( x, \vp^0) -p^0_i\psi_i(x)\right)d\mu_i(x)= 0 \ . $$

 Note that  $\xi_0(\vp,x)\geq p_i\psi_i(x)$ for any $i\in I$ and a.e $x\in X$ with strong inequality  only  for $x\in X^0_j(\vp^0)$, $j\not= i$,  by definition of $\xi_0$. Hence $\mu_i=h_i\mu$ where $h=0$ on $X-X^0_i(\vp^0)$ $\mu$-a.e.  Since
$\sum_1^N h_i=1$ $\mu$-a.e, it follows that, necessarily, $h_i$ is the indicator function of $X^0_i(\vp^0)$. In particular, $\vec{\mu}$ is a strong partition, and is a singleton in $ {\cal P}^{w,\vpsi}_{\vm}$.
 \end{proof}
 \begin{proof}   {\it of Lemma \ref{u=}}: \\
 Following the proof of Proposition \ref{uniquestrong} we get  the existence of $\vp^0>0$ for which $\Xi_0^+(\vp^0)-\vm\cdot\vp^0=0$. If $\vmu\in \underline{\cal P}^{\psi,w}_m$ is a weak subpartition, then as in the above proof we get
  $$ \Xi^+_0(\vp^0)= \int_X\xi^+_0(x,  \vp^0)d\mu(x)\geq  \sum_1^N \int_X\xi^+_0(x, \vp^0)d\mu_i (x) \geq \sum_1^N p^0_i\int_X\psi_id\mu_i = \vp^0\cdot\vm \ . $$
  In particular
  $$ \int_X\xi^+_0(x,  \vp^0)d\mu(x)=  \sum_1^N \int_X\xi^+_0(x, \vp^0)d\mu_i (x) \ . $$
  Since $\xi_0^+(x,\vp^0)$ is positive and continuous on $X$ and $\sum_1^N \mu_i\leq \mu$ it follows that $\vmu$ is, in fact, a weak partition.
 \end{proof}
 \subsection{Uniqueness of optimal strong (sub)partitions}\label{scunique}
 \begin{assumption}\label{mainass2} \  $\phi_i\in C(X)$ for all $ i\in I $.
\begin{description}
\item{i)} For any $i,j\in I$ and any $\alpha, \beta \in\R$,   \\ $\mu\left(x\in X \ ; \ \  \alpha\psi_i(x)-\beta\psi_j(x)+ \phi_i(x)-\phi_j(x)=0\right)=0$ .
    \item{ii)}  For any $i\in I$ and any $\alpha \in\R$,   \\ $\mu\left(x\in X \ ; \ \  \phi_i(x)=\alpha\psi_i(x)\right)=0$ .

             \end{description}
\end{assumption}

Recall (\ref{xi+},\ref{xi+0}).
 For each $\vp\in\R^I$ let
\be\label{Xiphiphi} X_i(\vp):= \{ x\in X \ ; \ p_i\psi_i(x)+ \phi_i(x) = \xiphi (x,\vp)\} \ , \ee
\be\label{uXiphiphi} \uX_i(\vp):= \{ x\in X \ ; \ p_i\psi_i(x)+ \phi_i(x) = \xiphi^+ (x,\vp)\} \ , \ee

 By Assumption \ref{mainass2}-(i) it follows that $\overrightarrow{X}(\vp)$ is, indeed, a strong partition for {\em any}  $\vp\in\R^I$.
  Likewise, Assumption  \ref{mainass2}-(i,ii) implies that $\overrightarrow{\uX}(\vp)$ is  a strong subpartition.
  In particular, $\mu(X_i(\vp)\cap X_j(\vp))=\mu(\uX_i(\vp)\cap \uX_j(\vp))=0$ for $i\not= j$. Moreover, (\ref{diffm}, \ref{udiffm}) are generalized into
 \be\label{diffmphi}  \frac{\partial\Xi_\phi}{\partial p_i}(\vp)= \int_{X_i(\vp)} \psi_i d\mu \ \ , \ \ \frac{\partial\uXi}{\partial p_i}(\vp)= \int_{\uX_i(\vp)} \psi_i d\mu \  \ee
where the right sides of (\ref{diffmphi}) are continuous in $\vp$. It follows
\begin{lemma}\label{difcor}
Under Assumption~\ref{mainass2}(i), $\Xi_\phi$ is differentiable on $\R^I$. If, in addition,  Assumption \ref{mainass2}(ii) is granted, then $\uXi$ is differentiable as well.
\end{lemma}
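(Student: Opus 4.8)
The plan is to deduce total differentiability from the elementary criterion that a function on $\R^I$ whose first-order partials all exist and are continuous is differentiable; so the whole task reduces to producing the partials and checking their continuity. I would carry this out first for $\Xi_\phi$ under Assumption~\ref{mainass2}(i), and then adapt it to $\uXi$ by bringing in Assumption~\ref{mainass2}(ii). Throughout, the structure mirrors the proof of Lemma~\ref{51}, with $\xiphi$ of (\ref{xi+}) replacing $\xi_0$ and the sets $X_i(\vp)$ of (\ref{Xiphiphi}) replacing the partition there.

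First I would establish the formula (\ref{diffmphi}) for the partials. Fix $\vp$ and $i$. For $\mu$-a.e.\ $x$ the maximum in (\ref{xi+}) is attained at a unique index, because the set where two indices $i\neq j$ tie lies in $\{x:\ p_i\psi_i(x)-p_j\psi_j(x)+\phi_i(x)-\phi_j(x)=0\}$, which is $\mu$-null by Assumption~\ref{mainass2}(i) applied with $\alpha=p_i$, $\beta=p_j$. At each such $x$, $\xiphi(x,\cdot)$ agrees near $\vp$ with the affine function $\phi_{i_0}(x)+p_{i_0}\psi_{i_0}(x)$ of the unique maximizing index $i_0$, so its $p_i$-derivative is $\psi_i(x)\mathbf 1_{X_i(\vp)}(x)$. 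Since the difference quotients in the $p_i$-direction are bounded by $\|\psi_i\|_\infty$, dominated convergence permits differentiation under the integral, giving $\partial\Xi_\phi/\partial p_i(\vp)=\int_{X_i(\vp)}\psi_i\,d\mu$. I would emphasize that Assumption~\ref{mainass2}(i) holds for \emph{all} $\alpha,\beta$, with no nondegeneracy restriction, so the tie sets are null even when components of $\vp$ vanish; this is exactly what lets $\Xi_\phi$ be differentiable on all of $\R^I$ rather than only off the coordinate hyperplanes as in Lemma~\ref{51}.

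The hard part is the continuity of $\vp\mapsto\int_{X_i(\vp)}\psi_i\,d\mu$, which I would obtain by a dominated-convergence argument on the indicators. Given $\vp_n\to\vp$, at any $x$ whose maximizer in (\ref{xi+}) is the single index $i_0$, continuity of each map $\vp\mapsto\phi_j(x)+p_j\psi_j(x)$ preserves the strict inequalities for large $n$, so $\mathbf 1_{X_i(\vp_n)}(x)\to\mathbf 1_{X_i(\vp)}(x)$ for every $i$; since the non-unique-maximizer set is $\mu$-null by the same computation as above, this convergence is $\mu$-a.e. Dominating by $\|\psi_i\|_\infty$ then yields $\int_{X_i(\vp_n)}\psi_i\,d\mu\to\int_{X_i(\vp)}\psi_i\,d\mu$. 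With continuity of all partials in hand, the $C^1$ criterion finishes the case of $\Xi_\phi$.

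Finally I would treat $\uXi$ by repeating the argument with the augmented maximum (\ref{xi+0}) containing the constant alternative $0$ and with the sets $\uX_i(\vp)$ of (\ref{uXiphiphi}). The only new interfaces are the ties between an index $i$ and the $0$-alternative, i.e.\ $\{x:\ \phi_i(x)=-p_i\psi_i(x)\}$, and these are $\mu$-null by Assumption~\ref{mainass2}(ii) with $\alpha=-p_i$; this is precisely where the second hypothesis enters. I expect the a.e.\ convergence of the indicators to be the one genuinely delicate point of the whole proof: it is the step where the nondegeneracy Assumptions~\ref{mainass2}(i)--(ii) are indispensable, since without them the interfaces between the regions $X_i(\vp)$, or between the $\uX_i(\vp)$ and the unassigned region, could carry positive mass and the partial derivatives would fail to be continuous.
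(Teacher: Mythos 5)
Your proof is correct and follows essentially the same route as the paper, which obtains (\ref{diffmphi}) by the mechanism of Lemma~\ref{51}: Assumption~\ref{mainass2} makes the tie sets $\mu$-null for \emph{every} $\vp$ (including those with vanishing coordinates), so the partials exist and equal $\int_{X_i(\vp)}\psi_i\,d\mu$ (resp.\ $\int_{\uX_i(\vp)}\psi_i\,d\mu$), and their continuity gives differentiability via the $C^1$ criterion. Your dominated-convergence treatment of both the differentiation under the integral and the continuity of the partials (a.e.\ convergence of the indicators $\mathbf 1_{X_i(\vp_n)}$) is in fact a more careful justification than the paper's terse appeal to Hausdorff continuity of the cells, but it is the same argument in substance.
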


\begin{theorem}\label{main3} \ .
\begin{description}
\item{i)}
Let $\vm$ be an interior point of $\vmS_I$. Under Assumption~\ref{mainass2}(i) , there exists a unique partition in ${\cal P}^{\psi,w}_m$ which maximize
$\int_X\vphi\cdot d\vmu$, and this partition is a strong one.
\item{ii)} If  $\vm$ be an interior point of $\uvmS_I$
and,
 in addition, Assumption \ref{mainass2}(ii) is granted, then for
 there exists a unique subpartition in $\underline{\cal P}^{\psi,w}_m$ which maximize
$\int_X\vphi\cdot d\vmu$, and this subpartition is a strong one.
\item{iii)}
If, in addition, Assumption  \ref{mainass3} is granted that both (i, ii) hold for any $\vm\in \vmS_I$ ($\vm\in\uvmS_I$).
\end{description}
\end{theorem}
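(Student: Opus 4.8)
The plan is to produce the optimizer as the strong (sub)partition generated by a minimizer $\vp^*$ of the dual functional, and to read off uniqueness from the equality case of the bound (\ref{stat}).

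I first dispose of the interior case, parts (i) and (ii). By Lemma \ref{essd} the essential domain of the concave conjugate $\Xi_\phi^*$ (res. $\uXi^*$) is exactly $\vmS_I$ (res. $\uvmS_I$), so for $\vm$ in its interior Corollary \ref{main2cor} gives that $\vp=\vo$ is a \emph{strict} minimizer of $\Xi_0(\vp)-\vm\cdot\vp$ (res. $\Xi_0^+(\vp)-\vm\cdot\vp$). Since $\Xi_0$ and $\Xi_0^+$ are positively homogeneous of degree one, strictness on the (compact) unit sphere yields a linear lower bound $\ge c|\vp|$ with $c>0$, and the comparison $|\Xi_\phi-\Xi_0|\le\|\vphi\|_\infty$ (res. $|\uXi-\Xi_0^+|\le\|\vphi\|_\infty$) from the proof of Lemma \ref{essd} then makes $\vp\mapsto\Xi_\phi(\vp)-\vm\cdot\vp$ (res. $\uXi(\vp)-\vm\cdot\vp$) coercive. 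A minimizer $\vp^*=(p_1^*,\ldots,p_N^*)$ therefore exists. By Lemma \ref{difcor}, $\Xi_\phi$ (res. $\uXi$, here using Assumption \ref{mainass2}(ii)) is differentiable, so the first-order condition is $\nabla\Xi_\phi(\vp^*)=\vm$, which by (\ref{diffmphi}) says precisely $\int_{X_i(\vp^*)}\psi_i\,d\mu=m_i$ for the cells (\ref{Xiphiphi}) (res. (\ref{uXiphiphi})). Thus the strong (sub)partition associated with $\vp^*$ is feasible, and splitting $\int_X\xiphi(\cdot,\vp^*)\,d\mu$ over the cells $X_i(\vp^*)$, on which $\xiphi=\phi_i+p_i^*\psi_i$ (and, in the subpartition case, noting that $\xiphi^+=0$ off the cells), shows its profit equals $\Xi_\phi(\vp^*)-\vp^*\cdot\vm=\Xi_\phi^*(\vm)$, which by Theorem \ref{main new} and (\ref{stat}) is the maximal value; hence it is an optimizer that is strong.

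For uniqueness I would run the inequalities (\ref{stat}) at $\vp=\vp^*$. Any maximizer $\vmu$ attains $\int_X\vphi\cdot d\vmu=\Xi_\phi^*(\vm)=\Xi_\phi(\vp^*)-\vp^*\cdot\vm$, so every inequality in (\ref{stat}) is forced to be an equality. Equality in the first step means each $\mu_i$ lives, modulo $\mu$-null sets, on $X_i(\vp^*)=\{\phi_i+p_i^*\psi_i=\xiphi(\cdot,\vp^*)\}$; combined with $\sum_i\mu_i=\mu$ and the essential disjointness of the cells guaranteed by Assumption \ref{mainass2}(i), this forces $\mu_i=\mu\lfloor X_i(\vp^*)$, so the maximizer coincides with the strong partition and is unique. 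In the subpartition case (ii) the extra hypothesis Assumption \ref{mainass2}(ii) is exactly what closes the argument: equality in the step $\sum_i\int\xiphi^+\,d\mu_i\le\int\xiphi^+\,d\mu$ forces $\mu-\sum_i\mu_i$ to be carried by $\{\xiphi^+(\cdot,\vp^*)=0\}$, and Assumption \ref{mainass2}(ii) applied with $\alpha=-p_i^*$ makes each interface $\{\phi_i+p_i^*\psi_i=0\}$ a $\mu$-null set, so the uncovered region, hence $\vmu$, is determined and equals the strong subpartition $(\mu\lfloor\uX_i(\vp^*))$.

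Finally, part (iii) promotes the conclusion from the interior to all of $\vmS_I$ (res. $\uvmS_I$) under the nondegeneracy Assumption \ref{mainass3}. For $\vm\in\partial\vmS_I$ with $\vm>\vo$ there is nothing to optimize: Proposition \ref{uniquestrong}(ii) asserts that ${\cal P}^{\psi,w}_m$ is already a \emph{singleton} strong partition, and Lemma \ref{u=} gives the analogous collapse $\underline{\cal P}^{\psi,w}_m={\cal P}^\psi_m$ for subpartitions, so the unique feasible element is trivially the maximizer. Boundary points at which some coordinate vanishes are reduced to the support $J$ through Corollary \ref{v=}, using that $\psi_i>0$ forces every off-support cell to be $\mu$-null; on $J$ one is again in the interior or boundary situation already treated. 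I expect the genuine obstacle to be the uniqueness analysis of the preceding paragraph — precisely localizing the support of an arbitrary maximizer on the dual cells and, in the subpartition case, pinning the uncovered set to $\{\xiphi^+=0\}$ and invoking Assumption \ref{mainass2}(ii) to discard its $\mu$-null boundary — together with verifying that the support reduction in (iii) remains compatible with the dual minimization.
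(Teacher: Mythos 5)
Your proposal is correct, and its skeleton is the same as the paper's: pass to the dual, use a minimizing vector $\vp^*$ to define the cells (\ref{Xiphiphi})--(\ref{uXiphiphi}), get feasibility of the induced strong (sub)partition from the derivative formula (\ref{diffmphi}), and obtain uniqueness by forcing equality in the chain (\ref{stat}) so that every $\mu_i$ is carried by its cell. The differences lie in how two intermediate steps are justified, and they make your argument more self-contained. First, the paper obtains the dual minimizer by citing [\ref{BC}] (attainment of the infimum defining $\Xi_\phi^*$ at interior points of its essential domain), whereas you prove attainment by an explicit coercivity bound: strictness at $\vo$ from Corollary \ref{main2cor}, degree-one homogeneity of $\Xi_0$ (res. $\Xi_0^+$), and the estimate $|\Xi_\phi-\Xi_0|\le\|\vphi\|_\infty$ from Lemma \ref{essd}. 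Second, the paper invokes Lemma \ref{writeXiw} (whose proof rests on the Mosco/$G$-convergence machinery of Lemma \ref{writeXi}) to know a priori that any maximizer attains the dual value $\Xi_\phi^*(\vm)$; you instead construct the strong (sub)partition from $\vp^*$ first, compute its profit to be $\Xi_\phi(\vp^*)-\vp^*\cdot\vm$, and then the elementary bound (\ref{stat}) alone pins down the maximal value, so strong duality comes for free and Lemma \ref{writeXiw} is never needed. Your uniqueness step is also marginally more robust: you conclude $\mu_i=\mu\lfloor X_i(\vp^*)$ from $\sum_i\mu_i=\mu$ (res.\ from locating the leftover measure on $\{\xiphi^+=0\}$) together with essential disjointness of the cells, whereas the paper's identity (\ref{eq++++}) implicitly uses positivity of $\psi_i$ to pass from equality of $\psi_i$-masses to equality of measures. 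Part (iii) is handled identically in both arguments, via Proposition \ref{uniquestrong}(ii), Lemma \ref{u=}, and the reduction to the support $J$ through Corollary \ref{v=}.
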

\begin{proof}
We may assume that $\vm>\vo$ for otherwise, if $m_i>0$ for $i\in J\subset I$ and $m_i=0$ for $i\not\in J$, we can restrict our discussion from $I$ to $J$ (c.f Corollary \ref{v=}).
\par
 Same argument holds if $\vm\in \partial\uvmS_J$. Hence we assume that $\vm$ is an interior  point of $\vmS_J$ (res. $\uvmS_J$).
\par
{\it (i)}: \
Recall that, for any $\vmu\in {\cal P}^{\psi,w}_m$,
$$ \int_X\vphi\cdot d\vmu \leq \Xi_\phi^*(\vm):= \inf_{\vp\in\R^J}\left[ \Xi_\phi(\vp)-\vm\cdot\vp\right]$$
and $\vmS_J$ is the essential domain of  $\Xi_\phi^*$. By Lemma \ref{writeXiw} any maximizer satisfies the equality above $\Xi_\phi^*(\vm)=\int_X\vphi\cdot d\vmu$.

If $\vm$ is an interior point then (see [\ref{BC}]) there exists $\vp\in\R^J$ for which the equality
 $$\Xi_\phi(\vp)-\vm\cdot\vp= \Xi_\phi^*(\vm)$$
 holds. For any $\vmu$ (in particular, for the maximizer) we get from the definition of $\Xi_\phi$
 \be\label{eq==}\Xi_\phi(\vp)-\vm\cdot\vp = \sum_1^N\int_X \left( \xi_\phi(x, \vp) - p_i\psi_i(x)\right)d\mu_i\ee
so, by Lemma \ref{writeXiw} any maximizer satisfies
\be\label{so} \sum_1^N\int_X\left( \xi_\phi(x,\vp)-p_{i}\psi_i(x) -\phi_i(x)\right)d\mu_i(x)=0 \ . \ee
 By Assumption \ref{mainass2}-(i) and (\ref{Xiphiphi}), the $i$ integrand above is positive on $X-X_i(\vp)$ and a.e zero on $X_i(\vp)$, so $\mu_i$ is supported on $X_i(\vp)$. Since
 \be\label{eq++++}\int_{X_i(\vp)}\psi_id\mu= \frac{\partial \Xi_\phi}{\partial p_i}(\vp) = m_i= \int_X\psi_id\mu_i\ee
 it follows that $\mu_i=\mu\lfloor X_i(\vp)$, that is, $\vmu$ is a strong partition. \\
 \par\noindent {\it (ii)}
  \ In the case $\vmu\in \underline{\cal P}^{\psi,w}_m$ (\ref{eq==}) turns into an inequality
$$\uXi(\vp)-\vm\cdot\vp \geq  \sum_1^N\int_X \left( \xi^+_\phi(x, \vp) - p_i\psi_i(x)\right)d\mu_i$$
but $\xi_\phi^+-p_i\psi_i-\phi_i\geq 0$ on $X$ by definition (\ref{xi+0}) so
$$\uXi(\vp)-\vm\cdot\vp \geq  \sum_1^N\int_X \left( \xi^+_\phi(x, \vp) - p_i\psi_i(x)-\phi_i(x)\right)d\mu_i+ \int_X\vphi\cdot d\vmu \geq  \uXi^*(\vm)$$
By Lemma \ref{writeXiw} again we have $\uXi^+(\vm) = \uXi(\vp)-\vm\cdot\vp$ so we have equality in (\ref{eq++++}) and the rest of the proof as above.
\par\noindent
 {\it (iii)}: If $\vm\in\partial\vmS_J$ ($\vm\in\partial\uvmS_I$) then by Proposition \ref{uniquestrong}-(ii) the set ${\cal P}^{\psi,w}_m$
 ($\underline{\cal P}^{\psi,w}_m$) is composed a unique strong (sub)partition, so the Theorem follows trivially.
\end{proof}
We turn now to the case of optimal selection.
\begin{theorem}\label{thselect}
Given a closed convex set $K\subset\R^I$. There exists a unique subpartition which optimize (\ref{kol}), and this subpartition is a strong one.
\end{theorem}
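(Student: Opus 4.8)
The plan is to follow the two–step paradigm (\ref{mainrashi})--(\ref{kol}): first optimize over the prescribed value $\vm$, then over the subpartitions realizing that value. I pass to the weak relaxation, which is harmless because $\uvmS_I=\uvmSw_I$ by Theorem~\ref{weak=strong} and because the weak optimizer will turn out to be strong (so the weak supremum is attained inside the strong class). With this in mind I would write
\be\label{decouple}
\sup_{\vmu\,\in\,\cup_{\vm\in K}\underline{\cal P}^{\vec{\psi},w}_{\vm}}\ \int_X\vphi\cdot d\vmu
\ =\ \sup_{\vm\in K\cap\uvmS_I}\ \uXi^*(\vm),
\ee
which is immediate from the definition of $\uXi^*$ as the supremum of $\int_X\vphi\cdot d\vmu$ over $\underline{\cal P}^{\vec{\psi},w}_{\vm}$. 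The right–hand side is the maximization of a concave function over a convex compact set: by Lemma~\ref{essd} the function $\uXi^*$ is concave with essential domain $\uvmS_I$, by Lemma~\ref{noempty} the set $\uvmS_I$ is compact and convex, and $K$ is closed and convex by hypothesis, so $K\cap\uvmS_I$ is compact and convex. Hence a maximizer $\vm^*$ exists, and existence of an optimal subpartition is in any case furnished by Theorem~\ref{main2}.

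Next I would pin down the optimizers. Set $M:=\sup_{\vm\in K\cap\uvmS_I}\uXi^*(\vm)$ and let $\vmu$ be any optimal subpartition for (\ref{kol}); put $\vm:=\vm(\vmu)\in K\cap\uvmS_I$ as in (\ref{vmeasure}). Then $\int_X\vphi\cdot d\vmu=\uXi^*(\vm)=M$, so $\vm$ attains the maximum in (\ref{decouple}) and $\vmu$ is an optimal weak subpartition for $\uXi^*(\vm)$. Since Assumptions~\ref{mainass2} and~\ref{mainass3} are standing at this point, Theorem~\ref{main3}(ii)--(iii) applies to \emph{every} $\vm\in\uvmS_I$, boundary points included, and says that the maximizer of $\int_X\vphi\cdot d\vmu$ in $\underline{\cal P}^{\vec{\psi},w}_{\vm}$ is unique and is a strong subpartition. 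Thus every global optimizer is strong, and two global optimizers sharing the same barycenter $\vm$ necessarily coincide.

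It remains to exclude two optimizers with distinct barycenters, which is the real content. Suppose $\vmu_1,\vmu_2$ are optimal with $\vm_1:=\vm(\vmu_1)\neq\vm_2:=\vm(\vmu_2)$; both then maximize $\uXi^*$ on $K\cap\uvmS_I$. Consider the midpoint $\vmu:=\tfrac12(\vmu_1+\vmu_2)$. It is again a weak subpartition, its barycenter is $\vm:=\tfrac12(\vm_1+\vm_2)\in K\cap\uvmS_I$ by convexity, and by concavity $\uXi^*(\vm)\ge\tfrac12(\uXi^*(\vm_1)+\uXi^*(\vm_2))=M$, while $\uXi^*(\vm)\le M$; hence $\uXi^*(\vm)=M$. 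By linearity of $\vmu\mapsto\int_X\vphi\cdot d\vmu$ the profit of $\vmu$ equals $M=\uXi^*(\vm)$, so $\vmu$ is the unique optimizer at $\vm$ and is therefore strong. Writing $\mu_i=h_i\mu$ and $\mu_{k,i}=h_{k,i}\mu$, we get $h_i=\tfrac12(h_{1,i}+h_{2,i})\in\{0,1\}$ $\mu$–a.e. with $h_{1,i},h_{2,i}\in\{0,1\}$, which forces $h_{1,i}=h_{2,i}$ $\mu$–a.e. for each $i\in I$. Thus $\vmu_1=\vmu_2$, contradicting $\vm_1\neq\vm_2$. Consequently all optimizers have one common barycenter, and by the inner uniqueness of Theorem~\ref{main3} the optimal subpartition is unique and strong.

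The main obstacle is precisely this last step: concavity of $\uXi^*$ does not by itself make its maximizer unique, so uniqueness of the selection cannot be read off the outer problem (\ref{decouple}) alone. What resolves it is the inner rigidity of Theorem~\ref{main3}, namely that the optimal weak subpartition at any fixed $\vm$ is an honest partition into sets with $\{0,1\}$–valued densities; this extremality is incompatible with averaging two distinct solutions. It is exactly this interplay between the convexity of the value function $\uXi^*$ and the extremality of the inner optimizers that delivers both uniqueness and strongness.
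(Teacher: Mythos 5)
Your proposal is correct, but the way you dispose of the crux --- ruling out two optimizers with distinct barycenters --- is genuinely different from the paper's. Both arguments reduce the theorem, via Theorem~\ref{main3}, to showing that all optimizers share one barycenter $\vm$. The paper does this by convex analysis on the value function: since $\uXi$ is differentiable (Lemma~\ref{difcor}), its concave conjugate $\uXi^*$ is strictly concave on the interior of $\uvmS_I$ (Corollary 18.12(ii) of [\ref{BC}]), which settles interior maximizers; boundary maximizers with $\vm>\vo$ are handled through the exposedness statement of Proposition~\ref{uniquestrong}-(i), and maximizers with vanishing components through the reduction of Corollary~\ref{v=}. You bypass strict concavity, exposedness and the whole interior/boundary/support case analysis: you average two hypothetical optimizers with distinct barycenters, note by linearity of $\vmu\mapsto\vm(\vmu)$ and of the profit, together with weak duality, that the midpoint is an inner-optimal weak subpartition at the midpoint barycenter (which lies in $K\cap\uvmS_I$ by convexity), and then invoke Theorem~\ref{main3}(iii) to force the midpoint to have $\{0,1\}$-valued densities --- impossible for a proper average of two distinct strong subpartitions. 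Your route is shorter and uniform over interior and boundary points, at the price of using Theorem~\ref{main3}(iii) at full strength (hence, indirectly, Proposition~\ref{uniquestrong}-(ii) and Assumption~\ref{mainass3}); the paper's route is heavier but yields a by-product of independent interest, namely strict concavity of $\uXi^*$ on the interior of its domain, which your argument does not recover. One small citation point: the identity you call ``the definition'' of $\uXi^*$ --- that it equals the supremum of $\int_X\vphi\cdot d\vmu$ over $\underline{\cal P}^{\vec{\psi},w}_{\vm}$, attained by a maximizer --- is in the paper a theorem, not a definition (it is the content of inequality (\ref{stat}) together with Lemma~\ref{writeXiw}), so you should cite those results there; this is a presentational fix, not a gap.
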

\begin{proof}
By Theorem~\ref{main3} we only have to prove the uniqueness of the maximizer of $\uXi^*$   on $\uvmS_I\cap K$ (\ref{mainrashi}).
\par
To show the
  uniqueness of this  maximizer  we use   Corollary 18.12(ii) on page 268 of [\ref{BC}]. It implies that a function $\uXi^*$  is strictly convex  in the interior of its domain $\uvmS_I$ if it is  the convex dual of a differentiable convex function. In our case $-\uXi^*$ is the convex dual of $\uXi$ which is differentiable by Corollary \ref{difcor}. Hence, if the a maximizer $\vm$ of $\uXi^*$ in the convex set $K\cap \uvmS_I$ is an interior point of $\uvmS_I$, then it is unique by its strong concavity. If, on the other hand, $\vm\in\partial \uvmS_I\cap K$ is a maximizer and $\vm>\vo$ then Proposition~\ref{uniquestrong}-(i) implies that $\vm$ is an exposed point of $\uvmS_I$. This implies that, again, this maximizer is unique. If the components of $\vm$ are not all positive then we reduce the problem to the subset $J\subset I$ which support the maximizer $\vm$ and apply Corollary~\ref{v=}.
\end{proof}
\subsection{Back to Monge}\label{backM}
It is interesting to compare Assumption~\ref{mainass2}-(i) with the {\it twist condition} (\ref{twist1}).
Recall that the Monge problem corresponds to the case where all $\psi_i$ are equal, say $\psi_i\equiv 1$ for any $i\in I$. In that case Assumption~\ref{mainass2}-(i) takes the form
\be\label{twist}\mu\left(x\in X \ ; \ \   \phi_i(x)-\phi_j(x)=r\right)=0\ee
for any $i\not= j$ and $r\in\R$. This seems to be a weaker version of (\ref{twist1}).  Theorem \ref{main3}-(i) yields the uniqueness of of optimal partition for any $\vm$ in the interior of the set $\vmS_I$. Embarrassingly, $\vmS_I$ is the simplex $S_I$ (\ref{simI}), and does not contain any interior point! Part (iii) of Theorem \ref{main3} is of no help either, since Assumption \ref{mainass3} is never satisfied in that case.....
\par
On the other hand, if we add Assumption \ref{mainass2}-(ii) which, in the above case,  takes the form
\be\label{twist2}\mu\left(x\in X \ ; \ \   \phi_i(x)=r\right)=0\ee
for any $i\in I$ and any $r\in \R$, then Theorem \ref{main3}-(ii) yields
\begin{cor}\label{corunique}
Under conditions (\ref{twist}, \ref{twist2}) there is a unique, strong  subpartition for the Monge partition problem
for any $\vm\in \underline{S}^0_I:=$
$$ \left\{ \vm\in\R^I \ ; \ \ 0\leq m_i \ , \ \ \sum_{i\in I} m_i < 1\right\} \ . $$
\end{cor}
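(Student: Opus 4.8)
The plan is to reduce the corollary to Theorem~\ref{main3}(ii), after checking that the two displayed hypotheses are exactly Assumption~\ref{mainass2}(i),(ii) in the Monge case $\psi_i\equiv 1$. First I would verify the identification. With $\psi_i\equiv 1$, Assumption~\ref{mainass2}(i) reads $\mu(\alpha-\beta+\phi_i-\phi_j=0)=0$ for all $\alpha,\beta\in\R$; writing $r=\beta-\alpha$ this is precisely (\ref{twist}). Likewise Assumption~\ref{mainass2}(ii) reads $\mu(\phi_i=\alpha)=0$ for all $\alpha\in\R$, which is (\ref{twist2}). Hence both hypotheses needed to run Theorem~\ref{main3}(ii) are in force.

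Next I would pin down the subpartition set. Since $\psi_i\equiv 1$ gives $\int_X\psi_i\,d\mu_i=\mu_i(X)=m_i$, the weak subpartition constraint $\sum_1^N\mu_i\le\mu$ forces $m_i\ge 0$ and $\sum_i m_i=(\sum_i\mu_i)(X)\le\mu(X)=1$; conversely, non-atomicity of $\mu$ realizes any such $\vm$ by essentially disjoint sets, leaving over a set of mass $1-\sum_i m_i\ge 0$. Using Theorem~\ref{weak=strong} this gives $\uvmS_I=\{\vm;\ 0\le m_i,\ \sum_i m_i\le 1\}$, a full-dimensional simplex with relative interior $\{\vm;\ 0<m_i,\ \sum_i m_i<1\}$. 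The set $\underline{S}^0_I$ of the statement is the slab $\{0\le m_i,\ \sum_i m_i<1\}$: it is the part of $\uvmS_I$ lying strictly below the exhaustion facet $\{\sum_i m_i=1\}$, where genuine (non-exhausting) subpartitions still have room, but it also touches the coordinate faces $\{m_i=0\}$.

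The core of the argument is then a uniform support reduction. For $\vm\in\underline{S}^0_I$ put $J:=\{i\in I;\ m_i>0\}$. For $i\notin J$ the constraint $\mu_i(X)=m_i=0$ with $\mu_i\ge 0$ forces $\mu_i=0$, so those components are uniquely determined and contribute nothing. On $J$ we have $\vm_J>\vo$ and $\sum_{i\in J}m_i=\sum_{i\in I}m_i<1$, so $\vm_J$ lies in the interior of $\uvmS_J$; Theorem~\ref{main3}(ii), applied with $I$ replaced by $J$ (exactly the reduction that opens its proof, c.f.\ Corollary~\ref{v=}), produces a unique maximizing subpartition on $J$, which is strong. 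Reassembling the trivial components off $J$ with the strong subpartition on $J$ yields the unique strong maximizing subpartition on $I$, as claimed.

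The delicate point is the boundary bookkeeping rather than any new analysis: $\underline{S}^0_I$ is not the topological interior of $\uvmS_I$ but its intersection with $\{\sum_i m_i<1\}$, so the coordinate faces $\{m_i=0\}$ must be absorbed by the support reduction instead of by a direct appeal to Theorem~\ref{main3}(ii). The reason the corollary works at all---whereas the partition problem collapses, since $\vmS_I$ is the simplex $S_I$ with empty interior and Assumption~\ref{mainass3} is unavailable when $\psi_i\equiv 1$---is that passing to subpartitions makes $\uvmS_I$ full-dimensional, and this is exactly where the extra hypothesis (\ref{twist2}) (Assumption~\ref{mainass2}(ii)) is needed and used.
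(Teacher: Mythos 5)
Your proposal is correct and follows essentially the same route as the paper: conditions (\ref{twist}, \ref{twist2}) are exactly Assumption \ref{mainass2}(i),(ii) specialized to $\psi_i\equiv 1$, and the corollary is then read off from Theorem \ref{main3}(ii). Your explicit support reduction to $J=\{i\ ;\ m_i>0\}$ for points on the coordinate faces of $\underline{S}^0_I$ is precisely the reduction the paper performs at the opening of the proof of Theorem \ref{main3} (c.f.\ Corollary \ref{v=}), so it spells out a step the paper leaves implicit rather than constituting a different argument.
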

However, it turns out that condition (\ref{twist}) alone is also sufficient for the uniqueness of strong partition in the Monge case:
\begin{theorem}\label{classM}
Suppose $\psi_1=\ldots=\psi_N\equiv 1$ and the components of $\vphi$ are continuous on $X$. If (\ref{twist}) is satisfied then there is a unique optimal partition for any $\vm\in S_I$ (\ref{simI}), and this unique partition is a strong one.
\end{theorem}
\begin{proof}
In the case under consideration, (\ref{xi+}, \ref{Xiphi}) takes the form
$$ \xi_1(x, \vp) := \max \left\{ \phi_1(x) + p_1, \ldots , \phi_N(x) + p_N\right\} \ , $$
$$\Xi_1 (\vp):= \int_X \xi_1(x, \vp) d\mu(x):\R^I \rightarrow \R \ , $$
Note that $\Xi_1$ is additively invariant under shift $$\Xi_1(\vp+\alpha\vec{1})=\Xi_1(\vp)+\alpha \ \ \forall \vp\in\R^I, \ \alpha\in\R \ . $$
So, $\Xi_1(\vp)-\vm\cdot\vp$ is invariant under such shift for any $\vm\in S_I$. Thus, we may set to zero the first coordinate $p_1$ of $\vp$ and obtain for $\Xi_1^0(p_2, \ldots p_N):= \Xi_1(0, p_2, \ldots p_N)$
$$ \Xi_1^0(p_2, \ldots p_N)-\sum_2^N m_ip_i= \Xi_1(\vp)-\vm\cdot\vp \ . $$
Now, (\ref{twist}) implies that $\Xi_1^0\in C^1(\R^{N-1})$ and the range of $\nabla\Xi_1^0$ is the whole $N-1$ simplex
$\sum_2^N m_i\leq 1$. Thus for any point $\vm\in S_I$ for which $m_1>0$, we get $(m_2, \ldots m_N)$ as an interior point in the range of
$\nabla\Xi_1^0$. This yields the proof of uniqueness as in Theorem \ref{main3}-(i).
\end{proof}
\newpage
\begin{center}{\bf References}\end{center}
\begin{enumerate}
\item\label{A} H. Attouch: \  {\it Variational Convergence for Functions and Operators}, Pitman publishing limited, 1984
\item\label{BC}  H.H. Bauschke and P.L. Combettes: {\it  Convex Analysis and Monotone Operator Theory in Hilbert Spaces}, Springer, 2011.
\item\label{CL} G. Carlier, A. Lachapelle: {\it  A Planning Problem Combining Calculus of Variations and Optimal
Transport}, Appl. Math. Optim. 63, , 1-9,
79-104. 2011
\item\label{G} W. Gangbo: {\it The Monge Transfer Problem and its Applications}, Contemp. Math., 226,  1999
\item\label{ly} A. Lyapunov: {\it Sur les fonctions-vecteurs completement additives} , Bull. Acad. Sci. URSS (6) (1940), 465-478.
\item \label{MT} X.N. Ma, N. Trudinger and X.J. Wang: {\it  Regularity of potential functions of the optimal transportation problem} Arch. Rational Mech. Anal., 177, 151-183, 2005
    \item\label{Mc} R. McCann and N.  Guillen:  {\it Five lectures on optimal transportation: Geometry, regularity and applications} \\
    http://www.math.cmu.edu/cna/2010CNASummerSchoolFiles/lecturenotes/mccann10.pdf
\item\label{mon} G.Monge:\ {\it M\'{e}moire sur la th\'{e}orie des d\'{e}blais et des remblais}, In  Histoire de l\'{A}cad\'{e}mie Royale des Sciences de Paris,  666-704, 1781
\item\label{Ri}  L. Ruschendorf and L.  Uckelmann: {\it On optimal multivariate couplings. Distributions with given marginals and moment problems},  261-273, Kluwer Acad. Publ., Dordrecht, 1997
\item \label{vil} C. Villani:  {\it Topics in Optimal Transportation}, A.M.S Vol 58, 2003
\item \label{vil1} C. Villani:  {\it  Optimal Transport Old and New}, Springer 2009

\end{enumerate}\end{document}